\newtheorem{thrm}{Theorem}[section]
\newtheorem{cor}[thrm]{Corollary}
\newtheorem{lem}[thrm]{Lemma}
\newtheorem{prop}[thrm]{Proposition}
\theoremstyle{definition}
\newtheorem{defn}[thrm]{Definition}
\newtheorem{exm}[thrm]{Example}
\theoremstyle{definition}
\newtheorem{rem}[thrm]{Remark}
\renewcommand{\iff}{\Leftrightarrow}
\newcommand{\pMod}[1]%
{%
	{\sf pMod}{(#1)}%
}
\newcommand{\Mod}[1]%
{%
	{\sf Mod}{(#1)}%
}
\newcommand{\ESMod}[1]%
{%
	{\sf ESMod}{(#1)}%
}
\newcommand{\ESModst}[1]%
{%
	{\sf ESMod^*}{(#1)}%
}
\newcommand{\InvESMod}[1]%
{%
	{\sf InvESMod}{(#1)}%
}
\newcommand{\ESSet}[1]%
{%
	{\sf ESSet}{(#1)}%
}
\newcommand{\A}[1]%
{%
	{\sf A}{(#1)}%
}
\newcommand{\Ab}%
{%
	{\sf Ab}%
}
\newcommand{\impl}%
{%
	\Rightarrow
}
\newcommand{\id}{\mathrm{id}}
\newcommand{\cI}[1]%
{%
	\mathcal I{(#1)}%
}
\newcommand{\cIui}[1]%
{%
	{\mathcal I}_{ui}{(#1)}%
}
\newcommand{\cS}[1]%
{%
	\mathcal S{(#1)}%
}
\newcommand{\cD}%
{%
	\mathcal D%
}
\newcommand{\cU}[1]%
{%
	\mathcal U{(#1)}%
}
\newcommand{\bbN}%
{%
	\mathbb N%
}
\newcommand{\bbZ}%
{%
	\mathbb Z%
}
\newcommand{\dom}[1]%
{%
	\operatorname{\mathrm{dom}}{#1}%
}
\newcommand{\ran}[1]%
{%
	\operatorname{\mathrm{ran}}{#1}%
}
\newcommand{\End}[1]%
{%
	\operatorname{\mathrm{End}}{#1}%
}
\DeclareMathOperator{\im}{im}%
\newcommand{\Hom}%
{%
	\mathrm{Hom}%
}
\newcommand{\Ext}%
{%
	\mathrm{Ext}%
}
\newcommand{\m}{^{-1}}
\newcommand{\0}{\theta}
\newcommand{\e}{\varepsilon}
\newcommand{\G}{\Gamma}
\newcommand{\bbC}{{\mathbb C}}
\begin{document}

\begin{center}
{\bf\Large Partial cohomology of groups}
\end{center}
 \vspace {3 mm}

\vspace {5 mm}
\centerline {\bf M. Dokuchaev\footnote{Partially supported by CNPq of Brazil} and  M. Khrypchenko\footnote{Supported by FAPESP of Brazil (process number: 2012/01554--7)}}

 \vspace {5 mm}

{\small
\centerline{Instituto de Matem\'atica e Estat\'istica --- Universidade de S\~ao Paulo,}
\centerline{Caixa Postal 66281, S\~ao Paulo, SP}
\centerline{05315--970 --- Brazil}
 \vspace {5 mm}

\begin{abstract}
 We develop a cohomology theory of groups based on partial actions and explore its  relation with the partial Schur multiplier as well as with cohomology of inverse semigroups.  
\end{abstract}
}

{\small\noindent{\bf Mathematics Subject Classifications (2000)}:  Primary 20J06; Secondary 18G60, 20C25, 20M18, 20M30, 20M50.}\\

\noindent{\small{\bf Key words and phrases}: partial group action, partial group representation,  cohomology, inverse semigroup, partial Schur multiplier.}

\section*{Introduction}\label{sec-intro}

In~\cite{E-1}, \cite{E0} and \cite{E1} R.~Exel initiated a  successful method to study   $C^*$-algebras,  which is based on new concepts, namely, those of a partial action, the corresponding crossed product  and a partial representation, as well as on the interaction between them.  Relevant classes of
$C^*$-algebras have been  shown to have the structure of a non-trivial  crossed product by a partial action, permitting  one to investigate their internal structure, representations  and  K-theory (see \cite{E-2}, \cite{E-3}, \cite{ELQ}, \cite{QR} and \cite{AraEKa}).

The first algebraic results on the above mentioned notions were established in~\cite{E1}, \cite{DEP}, \cite{S}, \cite{KL} and \cite{DE}, which together with the development of the partial  Galois theory in~\cite{DFP} stimulated an intensive algebraic activity on partial actions, corresponding crossed products and partial representations (see the surveys~\cite{D} and~\cite{F2}). In particular, applications were obtained to graded algebras in~\cite{DE} and \cite{DES1}, to Hecke algebras in~\cite{E3} and to Leavitt path algebras in~\cite{GR}. 

A purely ring theo\-re\-tic version  of the general notion of a  twisted partial group action, initially introduced in the context of   $C^*$-algebras  in~\cite{E0},   was given in~\cite{DES1}.  It  allowed one to show that any group graded algebra, satisfying some reasonably mild restrictions, is stably isomorphic (in a certain sense) to a  crossed product by a twisted partial action, establishing thus an algebraic counterpart of a result from~\cite{E0}. The concept  involves a general twisting which satisfies the $2$-cocycle identity in some restricted sense, and one naturally wonders  what kind of cohomology theory would fit this. It is complicated to give such a theory which would embrace the full generality of~\cite{DES1}, since the twisting takes its values in  multiplier algebras of products of some ideals indexed by group elements. 
Nevertheless, imposing a reasonable restriction on the partial action, namely, that it is unital,  makes it possible to define partial cohomology. Actually, this restriction is assumed in almost all algebraic papers around partial actions, because it  provides an appropriate technical framework.

One feels more confident in developing such a cohomology theory if one shows that it matches some other concepts in a way how the usual group cohomology does. In particular, the Schur multiplier of a group $G$ over the complex numbers $\bbC $ is isomorphic to $H^2(G, \bbC ^{\ast})$ with  trivial action of  $G$ on  $ \bbC ^{\ast}.$  Thus  a theory of partial projective group representations could be a testing field for our cohomology. Such a theory was developed in~\cite{DN}, \cite{DN2} and \cite{DoNoPi}, in particular the structure of the partial Schur multiplier was explored.  Further results on the latter topic were obtained in~\cite{NP} and \cite{Pi}.

A twisted partial action of a group $G$ on a commutative ring $A$ falls into two parts: a partial action $\theta$ of $G$ on $A$ and its twisting. With an additional assumption that $\theta$ is unital (which means that the domains involved in $\theta$ are generated by central idempotents), one can derive  the concept of a partial $2$-cocycle (the twisting) whose  values belong to groups of invertible elements of appropriate ideals of $A.$ The concept of a partial $2$-coboundary then follows from that of an equivalence of twisted partial actions introduced in~\cite{DES2}. Replacing $A$  by a commutative multiplicative monoid, one comes to the definition of the second co\-ho\-mo\-lo\-gy group $H^2(G,A).$ Thus instead of a usual $G$-module we deal with a  partial $G$-module, which is a commutative monoid $A$ with a unital partial action $\theta$ of $G$ on $A$. The groups $H^n(G,A)$ with arbitrary $n$ are defined in a similar way (see Section~\ref{sec-notions}).  Replacing $A$ by an appropriate  submonoid one may actually assume that $A$ is inverse (see Remark~\ref{rem-semigr_of_U(prod_D_x)}). Next one asks how to obtain these groups using, say, projective resolutions. It turns out that the category of partial $G$-modules is not abelian since some sets of morphisms may be empty.   Fortunately, our cohomology can be related to Lausch-Leech-Loganathan cohomology of inverse semigroups (see~\cite{Lausch}, \cite{Leech75} and~\cite{Loganathan81}) via the inverse monoid $\cS G$ introduced by R. Exel in~\cite{E1} to serve partial actions and partial representations of $G$ (later in~\cite{KL} J.~Kellendonk and M.~V.~Lawson showed, using a result by M.~Szendrei~\cite{Szendrei89}, that $\cS G$ is isomorphic to the Birget-Rhodes expansion~\cite{Birget-Rhodes84,Birget-Rhodes89} of $G$). From a unital partial action of $G$ on $A$ one comes to an action of $\cS G$ and then to an ``almost'' Lausch's $\cS G$-module structure on $A$. The latter can be seen as a module in the sense of H.~Lausch over an epimorphic image of $\cS G$, provided that $A$ is an inverse partial $G$-module (see Definition~\ref{defn-inverse_part_mod}). Thus our category is made up of abelian ``components'' which are categories of Lausch's modules over epimorphic images of $\cS G$ (see Remark~\ref{rem-pi-comp_is_isom_to_StrMod(S')}). This way we are able to define free objects and free resolutions which lead to $H^n(G,A)$.

It turns out (Theorem~\ref{thrm-max-generated}) that the epimorphic images $S$ of $\cS G$, which appear in our treatment, are exactly the max-generated $F$-inverse monoids (see~\cite[p. 196]{Lawson2002}), whose maximum group image is $G$. Thus, our (non-abelian) category of inverse partial $G$-modules englobes all categories of $S$-modules (in the sense of H.~Lausch) for such $S$. Furthermore, given an inverse semigroup $S$ with maximum group image $G$ and a (global) $G$-module $A$, M.~Loganathan~\cite{Loganathan81} constructed a specific $S$-module, which we denote by $\hat A$, such that the cohomology of $S$ with values in $\hat A$ coincides with the usual cohomology of $G$ with values in $A$, establishing this way a certain relation between the cohomology of an inverse semigroup and the cohomology of its maximum group image. For the class of max-generated $F$-inverse monoids our approach gives a more comprehensive connection between the Lausch-Leech-Loganathan cohomology of $S$ and the partial cohomology of $G$.

The  article is organized as follows. After establishing  the basic concepts in Section~\ref{sec-notions}, we show in Section~\ref{Schur}  that the partial Schur multiplier $pM(G)$  is a union of $2$-cohomology groups with values in some,  in general non-trivial,  partial $G$-modules (Theorem~\ref{pM(G)_is_covered_by_R(theta^Gamma,S^Gamma)}). The passage from partial $G$-modules to  $\cS G$-modules via unital actions of $\cS G$ is done in Section~\ref{sect-modules_over_S(G)} (Theorem~\ref{thrm-from_part_iso_to_endo}). Our notion of a module $A$ over an inverse semigroup $S$ is more general than that introduced in~\cite{Lausch} (Definition~\ref{defn-S-mod}), and the relation with modules in the sense of~\cite{Lausch}  (strict $S$-modules) is given in Proposition~\ref{prop-from_part_act_to_Lausch}. 
Here  a crucial role is played by the semi-direct product $A\rtimes S$ (Definition~\ref{defn-cross_prod_for_S-mod}), which is the   semigroup analogue of the crossed product introduced in~\cite{EV}. If $S=\cS G$ then $A\rtimes S$ is isomorphic to the crossed product $A*_\0 G$ (Remark~\ref{rem-A*G_is_isom_to_A*S(G)}) defined in~\cite{DN}. 
In the same section we also define free $S$-modules and show their existence in Proposition~\ref{prop-free_epi-strict_S-mod} using Lausch's construction  of (strict) free $S$-modules. Next, in Section~\ref{sec-from_part_cohom_to_Lausch_cohom} we give a free resolution in order to obtain the cohomology groups $H^n (G, A)$ (Theorem~\ref{thrm-R_n_if_free_res_of_Z_S}). Finally, in Section~\ref{sec-max-generated} we describe the cohomology of a max-generated $F$-inverse monoid in terms of the partial cohomology of its maximum group homomorphic image (Corollary~\ref{cor-H^n_S(A)-as-H^n(G,A)}). We conclude the section by giving an interpretation in the setting of the partial group cohomology of a related result by M.~Loganathan~\cite[Proposition 3.6]{Loganathan81}.

There is a very natural point to be clarified, namely, the application of low dimensional partial group cohomology to extensions.  The constraints of the present article do not allow us to treat it here, and it is being considered in a separate paper under elaboration. 

	\section{The notions}\label{sec-notions}

		Let $G$ be a group and $A$ a semigroup. Recall from~\cite{DN} that a partial action $\theta$ of $G$ on $A$ is a collection of semigroup isomorphisms $\theta_x:A_{x^{-1}}\to A_x$, where $A_x$ is an ideal of $A$, $x\in G$, such that
	\begin{enumerate}[(i)]
		\item $A_1=A$ and $\theta_1=\id_A$; 
		\item $\theta_x(A_{x^{-1}}\cap A_y)=A_x\cap A_{xy}$;
		\item $\theta_x\circ\theta_y=\theta_{xy}$ on $A_{y^{-1}}\cap A_{y^{-1}x^{-1}}$.
	\end{enumerate}
We consider the case when $A$ is a commutative monoid and each ideal $A_x$ is unital, i.\,e. $A_x$ is generated by an  idempotent $1_x =1_x^A,$ which is central in $A.$ In this situation we shall say that $\theta $ is a {\it unital} partial action. Then $A_x\cap A_y=A_xA_y$, so the properties (ii) and (iii) from the above definition can be replaced by
	\begin{enumerate}[(i')]
		\setcounter{enumi}{1}
		\item $\theta_x(A_{x^{-1}}A_y)=A_xA_{xy}$;
		\item $\theta_x\circ\theta_y=\theta_{xy}$ on $A_{y^{-1}}A_{y^{-1}x^{-1}}$.
	\end{enumerate}
Note also that (ii') implies a more general equality 
	\begin{equation}\label{eq-theta_of_product_of_ideals}
		\theta_x(A_{x^{-1}}A_{y_1}\dots A_{y_n})=A_xA_{xy_1}\dots A_{xy_n},
	\end{equation}
which easily follows by  observing that $A_{x^{-1}}A_{y_1}\dots A_{y_n}=(A_{x^{-1}}A_{y_1})\dots (A_{x^{-1}}A_{y_n})$.

	\begin{defn}\label{defn-partial_module}
		A commutative monoid $A$ with a unital  partial action $\theta$ of $G$ on $A$  will be called {\it a (unital) partial $G$-module}. 
	\end{defn}
Recall from~\cite{DN2} that a morphism of partial actions $(A,\theta) \to (A',\0')$ of $G$  is a homomorphism of semigroups $\varphi:A\to A'$ such that $\varphi(A_x)\subseteq A'_x$ and $\varphi\circ\theta_x=\0'_x\circ\varphi$ on $A_{x^{-1}}$.

	\begin{defn}\label{defn-morph_of_part_mod}
		A  morphism of (unital) partial  $G$-modules $\varphi:(A,\theta)\to(A',\0')$ is a morphism of partial actions such that its restriction on each $A_x$ is a homomorphism of monoids $A_x\to A'_x$.
	\end{defn}

The category of (unital) partial $G$-modules and their morphisms will be denoted by $\pMod G$.  Sometimes $(A,\0)$ will be simplified to  $A$.

	\begin{defn}\label{defn-cochain}
		Let $A\in\pMod G$ and $n$ be a positive integer. {\it An $n$-cochain} of $G$ with  values in $A$ is a function $f:G^n\to A$, such that $f(x_1,\dots,x_n)$ is an invertible element of the ideal $A_{(x_1,\dots,x_n)}=A_{x_1}A_{x_1x_2}\dots A_{x_1\dots x_n}$. By {\it a $0$-cochain} we shall mean an invertible element of $A$. 
	\end{defn}

Denote the set of $n$-cochains by $C^n(G,A)$. It is an abelian group under the pointwise multiplication. Indeed, its identity is 
\[
	e_n(x_1,\dots,x_n)=1_{x_1}1_{x_1x_2}\dots 1_{x_1\dots x_n}
\]
and the inverse of $f\in C^n(G,A)$ is $f^{-1}(x_1,\dots,x_n)=f(x_1,\dots,x_n)^{-1}$, where $f(x_1,\dots,x_n)^{-1}$ means the inverse of $f(x_1,\dots,x_n)$ in $A_{(x_1,\dots,x_n)}$.

	\begin{defn}\label{defn-coboundary_hom}
		Let $(A,\theta)\in\pMod G$ and $n$ be a positive integer. For any $f\in C^n(G,A)$ and $x_1,\dots,x_{n+1}\in G$ define
	\begin{align}\label{eq-coboundary_hom}
		(\delta^nf)(x_1,\dots,x_{n+1})&=\theta_{x_1}(1_{x_1^{-1}}f(x_2,\dots,x_{n+1}))\notag\\
&\prod_{i=1}^nf(x_1,\dots , x_ix_{i+1}, \dots,x_{n+1})^{(-1)^i}\notag\\
&f(x_1,\dots,x_n)^{(-1)^{n+1}}. 
	\end{align}
Here the inverse elements are taken in the corresponding ideals. If $n=0$ and $a$ is an invertible element of $A$, we set $(\delta^0a)(x)=\theta_x(1_{x^{-1}}a)a^{-1}$.
	\end{defn}

	\begin{prop}\label{prop-coboundary_hom}
		The map $\delta^n$ is a homomorphism $C^n(G,A)\to C^{n+1}(G,A)$, such that 
	\begin{equation}\label{eq-deltasquare}
		\delta^{n+1}\delta^nf=e_{n+2}
	\end{equation} 
for any $f\in C^n(G,A)$.
	\end{prop}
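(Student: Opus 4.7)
The plan is to prove the statement in three stages: well-definedness of $\delta^n f$ as an $(n+1)$-cochain, the homomorphism property of $\delta^n$, and the cocycle identity $\delta^{n+1}\delta^n f = e_{n+2}$.

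For well-definedness, I would check that each factor on the right of (\ref{eq-coboundary_hom}) lies in $A_{(x_1,\dots,x_{n+1})}$ and is invertible there. The argument of the outer $\theta_{x_1}$ lies in $A_{x_1^{-1}}A_{x_2}A_{x_2x_3}\cdots A_{x_2\cdots x_{n+1}}$, so by (\ref{eq-theta_of_product_of_ideals}) its image lies in $A_{x_1}A_{x_1x_2}\cdots A_{x_1\cdots x_{n+1}} = A_{(x_1,\dots,x_{n+1})}$, with invertibility preserved because $\theta_{x_1}$ restricts to a monoid isomorphism between these two product ideals. Each remaining factor $f(x_1,\dots,x_ix_{i+1},\dots,x_{n+1})^{\pm 1}$ and $f(x_1,\dots,x_n)^{\pm 1}$ originates from a sub-ideal of $A_{(x_1,\dots,x_{n+1})}$, and its invertibility extends to the larger ideal after multiplying by the identity idempotent of the latter. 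The homomorphism property of $\delta^n$ then follows by distributing the product of two cochains through (\ref{eq-coboundary_hom}), using the commutativity of $A$, the idempotency $1_{x_1^{-1}}^2 = 1_{x_1^{-1}}$, and the fact that each $\theta_x$ is a monoid homomorphism on its domain.

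For the cocycle identity, my plan is to substitute the formula (\ref{eq-coboundary_hom}) for $\delta^n f$ into each of the $n+3$ factors of $(\delta^{n+1}\delta^n f)(x_1,\dots,x_{n+2})$ and to collect the resulting terms of the form $f(\dots)^{\pm 1}$, so that the classical pairwise cancellation pattern of ordinary group cohomology is recovered. The delicate step is the outer $\theta_{x_1}$ applied to $(\delta^n f)(x_2,\dots,x_{n+2})$: the inner $\theta_{x_2}$ appearing in the first factor of the latter must be absorbed using (iii'), so that
\[
\theta_{x_1}\bigl(1_{x_1^{-1}}\,\theta_{x_2}(1_{x_2^{-1}}f(x_3,\dots,x_{n+2}))\bigr) = \theta_{x_1x_2}\bigl(1_{x_2^{-1}x_1^{-1}}1_{x_2^{-1}}f(x_3,\dots,x_{n+2})\bigr),
\]
which then cancels with the corresponding piece produced by the $i=1$ factor of the outer product. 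I expect the main obstacle to be the careful bookkeeping of the idempotents $1_y$ pulled out by these rewrites: after all $f$-values cancel in pairs, one must confirm that the surviving product of idempotents is precisely $1_{x_1}1_{x_1x_2}\cdots 1_{x_1\cdots x_{n+2}} = e_{n+2}(x_1,\dots,x_{n+2})$, and not some coarser idempotent in $A$ that merely dominates it.
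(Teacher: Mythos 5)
Your plan follows the paper's proof essentially verbatim: the same ideal computation (via~\eqref{eq-theta_of_product_of_ideals}) for well-definedness, the same one-line reduction of the homomorphism property to $\theta_{x_1}$ being a homomorphism on $1_{x_1^{-1}}A$, and the same key use of (iii') to merge the two nested $\theta$-factors into $\theta_{x_1x_2}\bigl(1_{x_2^{-1}}1_{x_2^{-1}x_1^{-1}}f(x_3,\dots,x_{n+2})\bigr)$ before carrying out the classical pairwise cancellations and checking that the leftover idempotents multiply to exactly $e_{n+2}(x_1,\dots,x_{n+2})$. One small correction to the well-definedness step: the ideal $A_{(x_1,\dots,x_ix_{i+1},\dots,x_{n+1})}$ in which $f(x_1,\dots,x_ix_{i+1},\dots,x_{n+1})$ is invertible is a product of \emph{fewer} of the ideals $A_{x_1\cdots x_j}$ and therefore \emph{contains} $A_{(x_1,\dots,x_{n+1})}$ rather than being a sub-ideal of it; invertibility then passes \emph{down} to the smaller product ideal upon multiplication by its identity idempotent (it would not pass upward from a genuine sub-ideal, so the containment direction matters here).
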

	\begin{proof}
		Let $f\in C^n(G,A)$. We check first  that $\delta^nf\in C^{n+1}(G,A)$. Indeed, for $x_1,\dots,x_{n+1}\in G$ the element $f(x_2,\dots,x_{n+1})$ is invertible in $A_{(x_2,\dots,x_{n+1})}$. Then, being multiplied by $1_{x_1^{-1}}$, it becomes an invertible element of $A_{x_1^{-1}}A_{(x_2,\dots,x_{n+1})}$. Therefore, $\theta_{x_1}(1_{x_1^{-1}}f(x_2,\dots,x_{n+1}))$ is in\-ver\-tible in $A_{(x_1,\dots,x_{n+1})}$, because $\theta_{x_1}$ maps isomorphically $A_{x_1^{-1}}A_{(x_2,\dots,x_{n+1})}$ onto $A_{(x_1,\dots,x_{n+1})}$ by~\eqref{eq-theta_of_product_of_ideals}. Since the pro\-duct of invertible elements of some ideals is invertible in the product of these ideals, then by~\eqref{eq-coboundary_hom} the image $(\delta^nf)(x_1,\dots,x_{n+1})$ is invertible in
	\[
		A_{(x_1,\dots,x_{n+1})}\left(\prod_{i=1}^nA_{(x_1,\dots, x_ix_{i+1}, \dots,x_{n+1})}\right) A_{(x_1,\dots,x_n)}= A_{(x_1,\dots,x_{n+1})}.
	\]

As $A$ is commutative, to see that $\delta^n$ is a homomorphism, it  suffices  to note that
	\begin{align*}
		\theta_{x_1}(1_{x_1^{-1}}fg(x_2,\dots,x_{n+1}))&=\theta_{x_1}(1_{x_1^{-1}}f(x_2,\dots,x_{n+1})1_{x_1^{-1}}g(x_2,\dots,x_{n+1}))\\
		&=\theta_{x_1}(1_{x_1^{-1}}f(x_2,\dots,x_{n+1}))\theta_{x_1}(1_{x_1^{-1}}g(x_2,\dots,x_{n+1})).
	\end{align*}

It remains to prove~\eqref{eq-deltasquare}.  Take arbitrary  $x_1,\dots,x_{n+2}\in G.$  The factors in the product  $(\delta^{n+1}\delta^nf)(x_1,\dots,x_{n+2})$ to which the partial action is applied  are as follows:
	\begin{align*}
		&\theta_{x_1}(1_{x_1^{-1}}\theta_{x_2}(1_{x_2^{-1}}f(x_3,\dots,x_{n+2}))),\\ 
		&\theta_{x_1x_2}(1_{x_2^{-1}x_1^{-1}}f(x_3,\dots,x_{n+2})^{-1}),\\
		&\theta_{x_1}(1_{x_1^{-1}}f(x_2,\dots,x_{n+1})^{(-1)^{n+1}}),\\
		&\theta_{x_1}(1_{x_1^{-1}}f(x_2,\dots,x_{n+1})^{(-1)^{n+2}}),\\
		&\theta_{x_1}(1_{x_1^{-1}}f(x_2,\dots,x_ix_{i+1},\dots,x_{n+2})^{(-1)^{i-1}}),2\le i\le n+1,\\ 
		&\theta_{x_1}(1_{x_1^{-1}}f(x_2,\dots,x_ix_{i+1},\dots,x_{n+2})^{(-1)^i}),2\le i\le n+1.
	\end{align*}
The product of all the factors, except the first two, is $e_{n+2}(x_1,\dots,x_{n+2})$ for $n\geq 1.$ For $n=0$ the product is $e_1(x_1).$  Furthermore,
	\begin{align*}
		&\theta_{x_1}(1_{x_1^{-1}}\theta_{x_2}(1_{x_2^{-1}}f(x_3,\dots,x_{n+2})))\\
&=\theta_{x_1}(\theta_{x_2}(1_{x_2^{-1}}1_{x_2^{-1}x_1^{-1}}f(x_3,\dots,x_{n+2})))\\ 
&=\theta_{x_1x_2}(1_{x_2^{-1}}1_{x_2^{-1}x_1^{-1}}f(x_3,\dots,x_{n+2})) 
	\end{align*}
by the property (iii') from the definition of a partial action. After multiplying this by the second factor we shall obtain
	\begin{align*}
		&\theta_{x_1x_2}(1_{x_2^{-1}}1_{x_2^{-1}x_1^{-1}}e_n(x_3,\dots,x_{n+2}))\\ 
		&=1_{x_1}1_{x_1x_2}e_n(x_1x_2x_3,x_4,\dots,x_{n+2})\\
		&=e_{n+2}(x_1,\dots,x_{n+2}).
	\end{align*}

Any other factor in  $(\delta^{n+1}\delta^nf)(x_1,\dots,x_{n+2})$ appears together with its inverse,  as in the classical case, and multiplying such a pair we obtain a product of some of   the idempotents $1_{x_1}, 1_{x_1x_2},\dots.$ Thus, $(\delta^{n+1}\delta^nf)(x_1,\dots,x_{n+2})=e_{n+2}(x_1,\dots,x_{n+2})$ as desired. 
	\end{proof}

	\begin{defn}\label{defn-cohomology}
		The map $\delta^n$ is called {\it a coboundary homomorphism}. As in the classical case we define the abelian groups $Z^n(G,A)=\ker{\delta^n}$, $B^n(G,A)=\im{\delta^{n-1}}$ and $H^n(G,A)=\ker{\delta^n}/\im{\delta^{n-1}}$ of {\it partial $n$-cocycles, $n$-co\-boun\-da\-ries and $n$-cohomologies} of $G$ with values in $A$, $n\ge 1$ $(H^0(G,A)=Z^0(G,A)=\ker{\delta^0})$.
	\end{defn}

For example, 
	\begin{align*}
		H^0(G,A)&=Z^0(G,A)=\{a\in{\mathcal U}(A)\mid{\theta}_x(1_{x^{-1}}a) = 1_xa,  \forall x \in G \},\\
		B^1(G,A)&=\{ f\in C^1(G,A) \mid f(x) = {\theta}_x(1_{x^{-1}}a)  a^{-1}, \, \mbox{for some}\, a \in  {\mathcal U}(A)  \} 
	\end{align*} 
(here and below $\cU A$ denotes the group of invertible elements of $A$). Notice that $H^0(G,A)$ is the subgroup of $\theta $-invariants of  $\cU A$ (see \cite[p. 79]{DFP}). Furthermore, 
	\[
		( \delta^{1} f) (x,y) = \theta _x ( 1_{x^{-1}}f(y) ) f(xy)^{-1}f(x)
	\]  
for $f \in  C^1 (G,A),$ so that 
	\begin{align*}
		Z^1(G,A) &=\{f\in C^1 (G,A)\mid 1_xf(xy)=f(x) \, \theta _x ( 1_{x^{-1}}f(y) ), \forall x,y\in G \},\\
		B^2 (G,A) &=\{ g \in  C^2 (G,A) \mid g(x,y) = \theta _x ( 1_{x^{-1}}f(y) ) f(xy)^{-1}f(x)\\
		&\phantom{=\{}\mbox{ for some } f \in  C^1 (G,A) \}.
	\end{align*}
For $n=2$ we have 
	\[
		( \delta^{2} f) (x,y,z) = \theta _x ( 1_{x^{-1}}f(y,z) )  \; f(xy,z)^{-1} \; f(x,yz) \; f(x,y)^{-1},
	\] 
with $f \in C^2 (G,A),$ and 
	\begin{align*}
		Z^2(G,A) &=\{f\in C^2 (G,A) \mid \theta _x ( 1_{x^{-1}}f(y,z) ) \; f(x,yz) = f(xy,z)  \, f(x,y),\\
		&\phantom{=\{} \forall x,y,z \in G \}.
	\end{align*}

Observe that if one takes a  unital twisted partial action (see~\cite[Def. 2.1]{DES1}) of  $G$ on a commutative ring $A$ then it is readily seen that the twisting is a $2$-cocycle with values in the partial $G$-module $A,$ and the concept of  equivalent  unital twisted partial actions from~\cite[Def. 6.1]{DES2} is exactly the notion of cohomologous $2$-cocycles  from Definition~\ref{defn-cohomology}.

	\begin{prop}\label{prop-H^n_is_a_functor}
		The map which sends a partial $G$-module $A$ to the sequence
	\[
		C^0(G,A)\overset{\delta^0}{\to} C^1(G,A)\overset{\delta^1}{\to}\dots\overset{\delta^{n-1}}{\to} C^n(G,A)\overset{\delta^n}{\to}\dots
	\]
is a functor from $\pMod G$ to the category of complexes of abelian groups.
	\end{prop}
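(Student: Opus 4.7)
The plan is to associate to each morphism $\varphi:(A,\theta)\to(A',\theta')$ of partial $G$-modules the family of maps $\varphi_n:C^n(G,A)\to C^n(G,A')$ defined by postcomposition, $(\varphi_n f)(x_1,\dots,x_n)=\varphi(f(x_1,\dots,x_n))$, and check three things: that $\varphi_n f$ is indeed an $n$-cochain over $A'$, that $\varphi_n$ is a group homomorphism commuting with the coboundary $\delta^n$, and that the assignment is functorial.

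For the first point I would use that the restriction $\varphi|_{A_x}:A_x\to A'_x$ is a monoid homomorphism, and hence sends the central idempotent $1_x=1_x^A$ to $1_x^{A'}$. Consequently, for any tuple, $\varphi$ maps the product ideal $A_{(x_1,\dots,x_n)}=A_{x_1}A_{x_1x_2}\cdots A_{x_1\cdots x_n}$ into $A'_{(x_1,\dots,x_n)}$ and sends its identity $1_{x_1}\cdots 1_{x_1\cdots x_n}$ to the identity of $A'_{(x_1,\dots,x_n)}$. Therefore the image of an invertible element is invertible in the corresponding product ideal of $A'$, so $\varphi_n f\in C^n(G,A')$, and $\varphi$ carries the inverse of $f(x_1,\dots,x_n)$ taken in $A_{(x_1,\dots,x_n)}$ to the inverse of $\varphi(f(x_1,\dots,x_n))$ taken in $A'_{(x_1,\dots,x_n)}$.

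Multiplicativity of $\varphi_n$ is immediate from $\varphi$ being a semigroup homomorphism. The identity $\delta^n\circ\varphi_n=\varphi_{n+1}\circ\delta^n$ is then a factor-by-factor comparison of the formula~\eqref{eq-coboundary_hom}: the compatibility $\varphi\circ\theta_x=\theta'_x\circ\varphi$ on $A_{x^{-1}}$, together with $\varphi(1_{x^{-1}})=1_{x^{-1}}^{A'}$ and multiplicativity, handles the first factor
\[
\varphi\bigl(\theta_{x_1}(1_{x_1^{-1}}f(x_2,\dots,x_{n+1}))\bigr)=\theta'_{x_1}\bigl(1_{x_1^{-1}}^{A'}\varphi(f(x_2,\dots,x_{n+1}))\bigr),
\]
while the remaining factors are handled by multiplicativity and preservation of inverses in the appropriate ideals noted above. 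Finally, $(\id_A)_n=\id_{C^n(G,A)}$ and $(\psi\circ\varphi)_n=\psi_n\circ\varphi_n$ hold tautologically, and each $(\varphi_n)_{n\ge 0}$ is a chain map by what we have just established, so the assignment is a functor into the category of complexes of abelian groups.

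The most delicate point is the bookkeeping that $\varphi$ preserves the \emph{local} identities $1_x$ and hence the local inversion operation, because invertibility throughout is taken in different ideals; this is precisely why Definition~\ref{defn-morph_of_part_mod} imposes that each restriction $\varphi|_{A_x}$ be a monoid, not merely semigroup, homomorphism. Once that is recognised, the verification reduces to routine checks.
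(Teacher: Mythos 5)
Your proposal is correct and follows essentially the same route as the paper's proof: define $\varphi_n f=\varphi\circ f$, use that $\varphi$ sends each local identity $1_x^A$ to $1_x^{A'}$ so that it restricts to a monoid homomorphism $A_{(x_1,\dots,x_n)}\to A'_{(x_1,\dots,x_n)}$ preserving invertibility and inverses, and reduce the chain-map identity to the single compatibility $\varphi\circ\theta_{x_1}(1_{x_1^{-1}}f(\cdots))=\theta'_{x_1}(1_{x_1^{-1}}^{A'}\varphi(f(\cdots)))$. Nothing further is needed.
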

	\begin{proof}
		Let $\varphi:(A,\theta)\to(A',\0')$ be a morphism of partial $G$-modules and $f\in C^n(G,A)$. Define $\widetilde\varphi_nf=\varphi\circ f:G^n\to A'$. We need to show that $\widetilde\varphi_nf\in C^n(G,A')$. Indeed, for any $x_1,\dots, x_n$ the element $f(x_1,\dots,x_n)$ is invertible in $A_{(x_1,\dots,x_n)}$. Since 
	\[
		\varphi(1^A_{x_1}1^A_{x_1x_2}\dots 1^A_{x_1\dots x_n})=1^{A'}_{x_1}1^{A'}_{x_1x_2}\dots 1^{A'}_{x_1\dots x_n},
	\]
then the restriction of $\varphi$ to $A_{(x_1,\dots,x_n)}$ is a homomorphism of monoids $A_{(x_1,\dots,x_n)}\to A'_{(x_1,\dots,x_n)}.$  Therefore, $\varphi(f(x_1,\dots,x_n))$ is invertible in $A'_{(x_1,\dots,x_n)}$ and its inverse is $\varphi(f(x_1,\dots,x_n)^{-1})$. Consequently, in order to  prove that $\{\widetilde\varphi_n\}_{n\ge 0}$ is a morphism of complexes, i.\,e.   to verify that $\widetilde\varphi_{n+1}\circ\delta^n=\delta^n\circ\widetilde\varphi_n,$ we  only need to show that
	\[
		\varphi\circ\0_{x_1}(1^A_{x_1^{-1}}f(x_2,\dots,x_{n+1}))=
\0'_{x_1}(1^{A'}_{x_1^{-1}}\varphi\circ f(x_2,\dots,x_{n+1})).
	\]
 By the definition of a morphism of partial $G$-modules
	\begin{align*}
		\varphi\circ\theta_{x_1}(1^A_{x_1\m}f(x_2,\dots,x_{n+1}))
		&=\0'_{x_1}\circ\varphi(1^A_{x_1\m}f(x_2,\dots,x_{n+1}))\\
		&=\0'_{x_1}(\varphi(1^A_{x_1\m})\varphi(f(x_2,\dots,x_{n+1})))\\
		&=\0'_{x_1}(1^{A'}_{x_1\m}\varphi(f(x_2,\dots,x_{n+1}))),
	\end{align*}
as desired.

The remaining properties of a functor are straightforward.
	\end{proof}

Composing the above functor with the homology functor we have the next.

	\begin{cor}\label{cor-H^n_is_a_funct}
		For any $n\ge 0$ the map $A\mapsto H^n(G,A)$ determines a functor from $\pMod G$ to the category $\Ab$ of abelian groups.
	\end{cor}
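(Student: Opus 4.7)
The plan is to realise $H^n(G,-)$ as the composition of the functor $\pMod G \to \mathrm{Ch}(\Ab)$ supplied by Proposition~\ref{prop-H^n_is_a_functor} with the standard $n$-th cohomology functor on cochain complexes of abelian groups. With this viewpoint the statement is almost formal: the only task is to check that the chain-level assignment $\widetilde\varphi_\bullet$ descends to the cohomology quotient and behaves well under identities and compositions, while respecting the somewhat unusual feature that our cochains take values in a family of ideals rather than in a single abelian group.

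Concretely, given a morphism $\varphi:(A,\theta)\to(A',\0')$ in $\pMod G$, I take the chain map $\widetilde\varphi_\bullet$ already produced inside the proof of Proposition~\ref{prop-H^n_is_a_functor}. Because each $\widetilde\varphi_n$ is a group homomorphism $C^n(G,A)\to C^n(G,A')$ satisfying $\widetilde\varphi_{n+1}\circ\delta^n=\delta^n\circ\widetilde\varphi_n$, and because $\widetilde\varphi_{n+1}(e_{n+1})=e'_{n+1}$ (using that $\varphi$ sends $1^A_x$ to $1^{A'}_x$, as observed in that same proof), the map $\widetilde\varphi_n$ automatically carries $Z^n(G,A)=\ker\delta^n$ into $Z^n(G,A')$ and $B^n(G,A)=\im\delta^{n-1}$ into $B^n(G,A')$. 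Passing to quotients yields a well-defined homomorphism
\[
H^n(\varphi):H^n(G,A)\to H^n(G,A'),\qquad [f]\mapsto[\widetilde\varphi_n f].
\]

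Functoriality then reduces to the evident identities $\widetilde{\id_A}{}_n=\id_{C^n(G,A)}$ and $\widetilde{\psi\circ\varphi}{}_n=\widetilde\psi_n\circ\widetilde\varphi_n$ at the cochain level, both of which descend to the cohomology quotient without change. I do not expect any genuine obstacle here: Proposition~\ref{prop-H^n_is_a_functor} has already absorbed all the partial-action bookkeeping into the chain-level functor, so the cohomology functor emerges for free from standard homological algebra on $\mathrm{Ch}(\Ab)$. The mild wrinkle one should keep in mind is only the verification that $\widetilde\varphi_n$ respects the identity cochain $e_n$, since inverses of cochains are taken in the varying ideals $A_{(x_1,\ldots,x_n)}$; but this follows at once from $\varphi(1^A_x)=1^{A'}_x$.
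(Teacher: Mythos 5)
Your proposal is correct and follows exactly the paper's route: the corollary is obtained by composing the functor of Proposition~\ref{prop-H^n_is_a_functor} with the standard $n$-th cohomology functor on complexes of abelian groups, which is all the paper says ("Composing the above functor with the homology functor we have the next"). The extra details you supply (that $\widetilde\varphi_n$ preserves the identity cochain and hence carries cocycles to cocycles and coboundaries to coboundaries) are the routine verifications the paper leaves implicit.
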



	\section{\texorpdfstring{$H^2(G,A)$}{H\texttwosuperior(G,A)} and the partial Schur multiplier}\label{Schur}

Let $K$ be a field. Recall from~\cite[Definition 2]{DN} that a {\it $K$-semigroup} is a semigroup $S$ with zero, endowed with a map $K\times S\to S$, satisfying $(\alpha\beta)s=\alpha(\beta s)$, $\alpha(st)=(\alpha s)t=s(\alpha t)$, $1_Ks=s$, $0_Ks=0_S$ for all $\alpha,\beta\in K$ and $s,t\in S$. One says that $S$ is {\it $K$-cancellative}, if $\alpha s=\beta s$ implies $\alpha=\beta$ for arbitrary $\alpha,\beta\in K$ and nonzero $s\in S$.

Let $M$ be a $K$-cancellative monoid. We recall one of the equivalent definitions of a {\it partial projective representation} of $G$ in $M$ given in~\cite[Theorem 3]{DN}. It is a map $\Pi:G\to M$, such that 
    \begin{align*}
    \Pi(x\m)\Pi(xy)\ne 0_M\iff \Pi(x)\Pi(y)\ne 0_M\iff \Pi(xy)\Pi(y\m)\ne 0_M,
    \end{align*}
    and there exists a unique partially defined map $\sigma:G^2\to K^*$ (called the {\it factor set} of $\Pi$) with $\dom\sigma=\{(x,y)\in G^2\mid \Pi(x)\Pi(y)\ne 0_M\}$ and 
    \begin{enumerate}
        \item $\Pi(x\m)\Pi(x)\Pi(y)=\Pi(x\m)\Pi(xy)\sigma(x,y)$;
        \item $\Pi(x)\Pi(y)\Pi(y\m)=\Pi(xy)\Pi(y\m)\sigma(x,y)$
    \end{enumerate}
    for all $x,y\in\dom\sigma$. As in~\cite{DN}, we set $\sigma(x,y)=0_K$ whenever $(x,y)\not\in\dom\sigma$.

We know from~\cite[Theorem 6]{DN} that each partial projective representation $\Pi:G\to M$ with factor set $\sigma$ induces $(A^\Pi,\theta^\Pi)\in\pMod G$. Here $A^\Pi$ is the commutative submonoid of $M$ generated by $\alpha \epsilon_x$, where $\alpha\in K$ and $\epsilon_x$ is the (possibly zero) idempotent $\Pi(x)\Pi(x^{-1})\sigma(x^{-1},x)^{-1}$. The ideal $A^\Pi_x$ is $\epsilon_xA^\Pi$ and $\theta^\Pi_x(a)=\Pi(x)a\Pi(x^{-1})\sigma(x^{-1},x)^{-1}$ for $a\in A^\Pi_{x^{-1}}$.

	\begin{rem}\label{rem-H^n(G,theta^Gamma,S^Gamma)_for_global_Gamma}
		If $\Pi:G\to M$ is a (global) projective representation, then $(A^\Pi,\theta^\Pi)$ is isomorphic to the trivial $G$-module $K$ (where $K$ is considered as a multiplicative semigroup). Moreover,  $H^n(G,A^\Pi)\cong H^n(G,K^*)$, where $H^n(G,K^*)$ is the classical $n$-th cohomology group of $G$ with values in the trivial $G$-module $K^*$. 
	\end{rem}
\noindent Indeed, in the global case $\Pi(x)\Pi(x\m)=\sigma(x,x\m)1_M$ and, since by (21) from~\cite{DN} $\sigma(x,x^{-1}) =  \sigma(x^{-1}, x)$, then $\epsilon_x=1_M$, so $A^\Pi_x=A^\Pi=K\cdot 1_M$ and $\theta^\Pi_x=\id_{K\cdot 1_M}$ for all $x\in G$. Any $f\in C^n(G,A^\Pi)$ is thus identified with a map $G^n\to K^*$, i.\,e. with a classical $n$-cochain, and~\eqref{eq-coboundary_hom} becomes the classical coboundary homomorphism under the trivial $G$-action on $K^*$.
	\begin{defn}\label{defn-K-linear-part-G-mod}
		By {\it a $K$-linear} partial $G$-module we shall mean $(A,\0)\in\pMod G$ such that $A$ is $K$-cancellative and each  $\theta _x : A_{x^{-1}} \to A_x$ is a  $K$-map.
	\end{defn}
Evidently, for any partial projective representation $\Pi$ of $G$ in a $K$-cancellative monoid $M$,  $\theta^\Pi$ gives a structure  of a  $K$-linear partial $G$-module on $A^\Pi .$

	\begin{defn}\label{defn-adjusted}
		Following~\cite{DN2} a $K$-linear partial $G$-module $A$  will be called {\it adjusted} if $A$ is generated by $\alpha 1_x$ ($\alpha\in K$, $x\in G$).
	\end{defn} 
Observe that in this case $E(A)=\langle 1_x\mid x\in G\rangle$ and $A=K\cdot E(A)$. Clearly, for any partial projective representation $\Pi : G \to M$ the $K$-linear partial  $G$-module $(A^\Pi , \theta^\Pi )$ is adjusted.  We recall from~\cite{DN} the next.

	\begin{defn}\label{defn-twisted} 
		Let $A$ be a  $K$-linear partial $G$-module. A $K$-valued {\it twisting related to $A$} is  a  function $\sigma :G \times G \to K$ satisfying the following conditions:
	\begin{enumerate}[(i)]
		\item $\sigma (x,y) =0_K \Leftrightarrow A_xA_{xy} =
0_A\;\; (x,y \in G);$
		\item $\sigma (x,1_G) = \sigma (1_G,x) =1_K$ for all $x \in G$ such that $A_x \neq 0_A;$
		\item $A_xA_{xy}A_{xyz} \ne 0_A \Rightarrow
\sigma (x,y) \sigma (xy, z) =  \sigma (y,z) \sigma (x, yz)  $ with
$ x,y,z \in G.$
	\end{enumerate}
	\end{defn} 

		By (iv) of Theorem~3 from~\cite{DN2} for any adjusted $A\in\pMod G$ and any twisting $\sigma$ related to $A$ there is a partial projective representation $\Pi:G\to M$ with factor set $\sigma$ such that $A$ is isomorphic to $A^\Pi$. Thus, among the  partial $G$-modules, the adjusted ones are precisely those, which come from partial projective representations of $G$.

	\begin{lem}\label{lem-inv_el_of_S^Gamma_x}
		Let $A$ be  an adjusted ($K$-linear) partial $G$-module and $0_A\ne e\in E(A)$. Then for any $a\in\cU{eA}$ there exists a unique $\alpha\in K^*$, such that $a=\alpha e$. 
	\end{lem}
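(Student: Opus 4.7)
My plan is to exploit the structural description $A = K\cdot E(A)$ provided by the adjusted hypothesis, which lets me write every element as $\alpha f$ with $f\in E(A)$, and then combine $K$-cancellativity with idempotency to force uniqueness. The existence of $\alpha$ is a relatively short computation once one knows such a representation exists; the delicate part is extracting a scalar and an idempotent from the product equation forced by invertibility.

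First, I would observe that every element of the ideal $eA$ is of the form $\beta f$ for some $\beta\in K$ and $f\in E(A)$ with $ef = f$. Indeed, by the remark after Definition~\ref{defn-adjusted} each $c\in A$ can be written $c=\beta f$ with $f\in E(A)$, and then $ec=\beta(ef)$, where $ef$ is again an idempotent lying below $e$. Applying this to an $a\in\cU{eA}$ and to its inverse $b\in eA$, I write $a=\beta e'$ and $b=\gamma e''$ with $e',e''\in E(A)$ satisfying $ee'=e'$ and $ee''=e''$. The relation $ab=e$ then translates, via the $K$-semigroup axioms, into
\[
(\beta\gamma)(e'e'')=e.
\]
Since $e\ne 0_A$, the axiom $0_K s=0_S$ rules out $\beta\gamma=0_K$, so $\beta\gamma\in K^*$; acting by $(\beta\gamma)^{-1}$ yields $e'e''=(\beta\gamma)^{-1}e$.

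The key step, and the one I expect to be the main obstacle, is upgrading this single equation into the two separate identities $\beta\gamma=1_K$ and $e'e''=e$. For this I would use that $e'e''$ is idempotent: squaring $e'e''=(\beta\gamma)^{-1}e$ and using idempotency of $e$ gives $(\beta\gamma)^{-2}e=(\beta\gamma)^{-1}e$, and $K$-cancellativity (applied to $e\ne 0_A$) forces $(\beta\gamma)^{-1}=1_K$. Consequently $e'e''=e$, and multiplying this equation by $e'$ and using $ee'=e'$ yields $e'=e$. Therefore $a=\beta e$ with $\beta\in K^*$, giving the required $\alpha=\beta$.

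Uniqueness is immediate: if $a=\alpha e=\alpha' e$ with $\alpha,\alpha'\in K^*$, then $K$-cancellativity applied to $e\ne 0_A$ gives $\alpha=\alpha'$. The whole argument rests on two ingredients available from the hypotheses: the adjusted condition to reduce to the shape $\beta f$ with $f\in E(A)$, and $K$-cancellativity to convert idempotent identities into scalar identities.
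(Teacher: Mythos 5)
Your proof is correct and rests on the same two ingredients as the paper's: the decomposition $A=K\cdot E(A)$ coming from the adjusted hypothesis, and $K$-cancellativity applied to $e\ne 0_A$. The paper gets $ef=e$ slightly more directly by multiplying $aa^{-1}=e$ by the idempotent part of $a$, whereas you also decompose $a^{-1}$ and use the squaring trick, but this is only a variation in bookkeeping, not a different argument.
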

	\begin{proof}
	    Let $a=\alpha ef\in\cU{eA}$, where $\alpha\in K$, $f\in E(A)$. Note that $\alpha\ne 0_K$, since otherwise $a=0_A\not\in\cU{eA}$. So, $\alpha\in K^*$. It follows from $\alpha efa^{-1}=e$ that $ef=e$ and hence $a=\alpha e$. The uniqueness of $\alpha$ is explained by the $K$-cancellative pro\-per\-ty of $A$.
	\end{proof}

Given $A\in \pMod G,$ a partial $2$-cocycle $f \in  Z^2(G,A)$ will be called {\it normalized} if $f(1_G,1_G)=1_A.$ Then using  $(\delta^2f)(x,1_G,1_G)=(\delta^2f)(1_G,1_G,x)=1_x$ we readily see that 
	\begin{equation}\label{eq-normalized}
		f(1_G,x) = f(x,1_G)=1_x \;\;\; \forall x \in G.
	\end{equation} 
The subgroup of  $Z^2(G,A)$ formed by the  normalized partial $2$-cocycles will be denoted by $NZ^2(G,A).$ 

	\begin{rem}\label{rem-each_cocycle_is_cohom_to_normalized}
		For each $f\in Z^2(G,A)$ there is $\widetilde f\in NZ^2(G,A)$ which is {\it cohomologous} to $f$, i.\,e. $f=\widetilde f\cdot\delta^1 g$ for some $g\in C^1(G,A)$.
	\end{rem} 
\noindent For one can take  $\widetilde f (x,y) = f(x,y) \0 _x (1_{x\m} f(1_G,1_G))^{-1}$ and  $g(x)=f(1_G,1_G)1_x\in\mathcal U(A_x)$ for all $x, y \in G$.

Recall from~\cite{DN} that the  factor sets of all partial projective representations of $G$ form a commutative inverse monoid $pm(G)$ under pointwise multiplication. By~\cite[Theorem IV.2.1]{Howie} the semigroup $pm(G)$ is a semilattice of (abelian) groups $\bigsqcup_{e\in E(pm(G))}pm(G)_e$, where $pm(G)_e$ is the subgroup of elements which are invertible with respect to $e$. Clearly, idempotents of $pm(G)$ are precisely those factor sets $\sigma$ which take values $0_K$ or $1_K$. They are obviously identified with their domains $\dom\sigma=\{(x,y)\in G^2\mid\sigma(x,y)=1_K\}$. Thus, $pm(G)=\bigsqcup pm(G)_D$, where $D$ runs over the domains of factor sets of all partial projective representations and $pm(G)_D$ denotes the subgroup of factor sets with domain $D$. The quotient semigroup $pM(G)=pm(G)/{\sim}$, where
	\[
		\sigma\sim\tau\Leftrightarrow\sigma(x,y)=\eta(x)\eta(xy)^{-1}\eta(y)\tau(x,y)
	\]
for some function $\eta:G\to K^*$, is called the partial Schur multiplier of $G$. It has the similar representation $pM(G)=\bigsqcup pM(G)_D$ as a semilattice of abelian groups (see~\cite[Theorem 5]{DN} for more details).

	\begin{prop}\label{prop-Z^2_is_embedded_into_pm}
		Let $(A,\0)$ be  an adjusted  $K$-linear partial $G$-module. Then  $NZ^2(G,A)$ is isomorphic to the subgroup of $pm(G)$ consisting of the  factor sets of all partial projective representations $\Pi:G\to M$, such that $(A^\Pi,\0^\Pi)$ is isomorphic to $(A,\0)$.
	\end{prop}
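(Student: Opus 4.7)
The strategy is to build an explicit isomorphism $\phi\colon NZ^2(G,A)\to P$, where $P$ denotes the subgroup of $pm(G)$ in the statement, by peeling off the ``scalar part'' of each normalized cocycle. Given $f\in NZ^2(G,A)$ and $x,y\in G$, the value $f(x,y)$ is an invertible element of the ideal $A_xA_{xy}$, whose identity is the idempotent $1_x1_{xy}$. When $1_x1_{xy}\neq 0_A$, Lemma~\ref{lem-inv_el_of_S^Gamma_x} yields a unique $\sigma_f(x,y)\in K^*$ with $f(x,y)=\sigma_f(x,y)\cdot 1_x1_{xy}$; when $1_x1_{xy}=0_A$, invertibility in the zero ideal forces $f(x,y)=0_A$ and I set $\sigma_f(x,y)=0_K$. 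Putting $\phi(f)=\sigma_f$ gives the candidate map.

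The heart of the argument is to verify that $\sigma_f$ is a twisting related to $A$ in the sense of Definition~\ref{defn-twisted}. Condition (i) holds by construction. For (ii), the normalization of $f$ together with the derived identities~\eqref{eq-normalized} $f(x,1_G)=f(1_G,x)=1_x$ immediately yield $\sigma_f(x,1_G)=\sigma_f(1_G,x)=1_K$ whenever $A_x\neq 0_A$. For (iii), I would substitute $f(u,v)=\sigma_f(u,v)\cdot 1_u1_{uv}$ into the cocycle relation $\theta_x(1_{x\m}f(y,z))f(x,yz)=f(xy,z)f(x,y)$, and use~\eqref{eq-theta_of_product_of_ideals} together with the fact that $\theta_x$ sends identities of ideals to identities of ideals to rewrite $\theta_x(1_{x\m}1_y1_{yz})=1_x1_{xy}1_{xyz}$; both sides then collapse to a $K$-scalar times $1_x1_{xy}1_{xyz}$. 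When $A_xA_{xy}A_{xyz}\neq 0_A$, $K$-cancellativity of $A$ yields the classical $2$-cocycle relation $\sigma_f(y,z)\sigma_f(x,yz)=\sigma_f(xy,z)\sigma_f(x,y)$. Theorem~3(iv) of~\cite{DN2} then produces a partial projective representation $\Pi$ with factor set $\sigma_f$ and $(A^\Pi,\theta^\Pi)\cong(A,\theta)$, so $\sigma_f\in P$.

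That $\phi$ is a group homomorphism follows from $(fg)(x,y)=\sigma_f(x,y)\sigma_g(x,y)\cdot 1_x1_{xy}$ via uniqueness in Lemma~\ref{lem-inv_el_of_S^Gamma_x}; injectivity is immediate since $\sigma_f$ determines $f$ through $f(x,y)=\sigma_f(x,y)\cdot 1_x1_{xy}$. For surjectivity, given $\sigma\in P$ with a corresponding $\Pi$, transport along the isomorphism $(A^\Pi,\theta^\Pi)\cong(A,\theta)$ makes $\sigma$ a twisting related to $A$, and setting $f_\sigma(x,y):=\sigma(x,y)\cdot 1_x1_{xy}$ one reverses the computation above to conclude $f_\sigma\in NZ^2(G,A)$ with $\phi(f_\sigma)=\sigma$.

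The main obstacle will be the careful bookkeeping on the degenerate strata where some product idempotent $1_x1_{xy}$ or $1_x1_{xy}1_{xyz}$ vanishes: one must check that the ideal-level identities in $A$ translate precisely to the partial scalar identities modulo the vanishing locus encoded by Definition~\ref{defn-twisted}(iii), and that the conventions $0_K\leftrightarrow 0_A$ remain coherent throughout. Modulo this bookkeeping, everything else is a routine application of Lemma~\ref{lem-inv_el_of_S^Gamma_x} and $K$-cancellativity.
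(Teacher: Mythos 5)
Your proposal is correct and follows essentially the same route as the paper: the same scalar-extraction map $f\mapsto\sigma_f$ via Lemma~\ref{lem-inv_el_of_S^Gamma_x}, the same verification that $\sigma_f$ is a twisting related to $A$ using normalization and $K$-cancellativity, and the same appeal to (iv) of Theorem~3 from~\cite{DN2} (the paper cites Theorem~8 and Corollary~11 of~\cite{DN} for the same purpose) to realize $\sigma_f$ as a factor set with $A^\Pi\cong A$, together with the inverse assignment $\sigma\mapsto\sigma(x,y)1_x1_{xy}$ for surjectivity. No gaps worth flagging.
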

	\begin{proof}
		Let $f\in NZ^2(G,A)$. Define $\sigma_f:G\times G\to K$ by
	\begin{equation}\label{eq-def_of_sigma_f}
		\sigma_f(x,y)=
	\begin{cases}
		\alpha_{xy}, & \mbox{if $A_x A_{xy}\ne 0_A$ and $f(x,y)=\alpha_{xy}1_x 1_{xy}$},\\
		0_K, & \mbox{if $A_x A_{xy}=0_A$}.
	\end{cases}
	\end{equation}
In particular, $f(x,y)=\sigma_f(x,y) 1_x 1_{xy}$. By Lemma~\ref{lem-inv_el_of_S^Gamma_x} the map $\sigma_f$ is well-defined. Note that $\sigma_f$ is a $K$-valued twisting related to $A$.  Indeed, (i) of Definition~\ref{defn-twisted} immediately follows from~\eqref{eq-def_of_sigma_f}, whereas   (ii) is a consequence of~\eqref{eq-normalized}. Next, suppose that $A_x A_{xy} A_{xyz}\ne 0_A$, i.\,e. $1_x 1_{xy} 1_{xyz}\ne 0_A$. 
Using the $2$-cocycle equality $(\delta^2f)(x,y,z)=1_x 1_{xy} 1_{xyz}$, the fact that $\theta _x: A_{x^{-1}}\to A_x$ is a morphism of $K$-monoids and the $K$-cancellative property, we obtain
	\[
		\sigma_f(y,z)\sigma_f(xy,z)^{-1}\sigma_f(x,yz)\sigma_f(x,y)^{-1}=1_K,
	\]
so (iii) of Definition~\ref{defn-twisted} is also true.

		Having the adjusted $K$-linear partial $G$-module $A$ and the twisting $\sigma_f$ related to it, by Theorem~8 from~\cite{DN} one can construct a partial projective representation $\Pi:G\to M$ whose factor set is $\sigma_f$. Using $\Pi$ we obtain the $K$-linear partial $G$-module $A^\Pi$. Since $A$ is generated by $\alpha 1_x$ as a semigroup ($\alpha\in K$, $x\in G$), Corollary~11 from~\cite{DN} (or (iv) of Theorem 3 from~\cite{DN2})  implies that $A^\Pi$ is isomorphic to $A$.

		Conversely, let $\Pi:G\to M$ be a partial projective representation with  factor set $\tau$, such that $A^\Pi$ is isomorphic to $A$. Define $g_\tau:G\times G\to A$ by
	\begin{equation}\label{eq-def_of_f_tau}
		g_\tau(x,y)=\tau(x,y)1_x 1_{xy}.
	\end{equation}
Note that by Theorem~6 from~\cite{DN} $\tau$ is a twisting related to $A^\Pi$, so $\tau(x,y)=0_K\Leftrightarrow A^\Pi_xA^\Pi_{xy}=0_{A^\Pi}$ which is equivalent to $A_x A_{xy}=0_A$, because $A^\Pi$ is isomorphic to $A$. Therefore, $g_\tau(x,y)$ is an invertible element of $A_x A_{xy}$ and thus $g_\tau\in C^2(G,A)$. We show next  that 
	\begin{equation}\label{eq-cocycle_identity_in_Z^2}
		(\delta^2g_{\tau })(x,y,z)=1_x 1_{xy} 1_{xyz}.
	\end{equation} 
If $A_xA_{xy}A_{xyz}\ne 0_A$, then $A^\Pi_xA^\Pi_{xy}A^\Pi_{xyz}\ne 0_{A^\Pi}$, and~\eqref{eq-cocycle_identity_in_Z^2} follows from the $2$-cocycle identity  for $\tau$ (see  (iii) of Definition~\ref{defn-twisted}). If $A_xA_{xy}A_{xyz}=0_A$, then $1_x1_{xy}1_{xyz}=0_A$, so both sides of~\eqref{eq-cocycle_identity_in_Z^2} are zero. Thus, $g_\tau\in Z^2(G,A)$. Moreover, $g_\tau\in NZ^2(G,A)$ because $\tau(1_G,1_G)=1_K$ by (ii) of Definition~\ref{defn-twisted}.

In order to check   that the map $NZ^2(G,A) \ni f\mapsto\sigma_f \in pm(G)$ is injective, it is enough to notice that by~\eqref{eq-def_of_sigma_f} and~\eqref{eq-def_of_f_tau} one has that $g_{{\sigma}_f}=f.$

Clearly $\sigma_{fg}=\sigma_f\sigma_g$, so $f\mapsto\sigma_f$ is a semigroup monomorphism of $NZ^2(G,A)$ into $pm(G),$ by means of which  $NZ^2(G,A)$ can be identified with  a subgroup of $pm(G),$ as desired. 
	\end{proof}

	\begin{rem}\label{rem-hom_cocycles_induce_equiv_factor_sets}
		Given two normalized partial 2-cocycles $f$ and $g$, it is readily seen that $f$ is cohomologous to $g$ if and only if $\sigma_f\sim\sigma_g$.
	\end{rem}

	\begin{cor}\label{cor-H^2_is_embedded_into_pM}
		The embedding $f\mapsto\sigma_f$ of $NZ^2(G,A)$ into $pm(G)$ induces the embedding  of the group $H^2(G,A)$ into $pM(G)$.
	\end{cor}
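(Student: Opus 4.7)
The plan is to combine the ingredients already at hand, namely Proposition~\ref{prop-Z^2_is_embedded_into_pm}, Remark~\ref{rem-each_cocycle_is_cohom_to_normalized} and Remark~\ref{rem-hom_cocycles_induce_equiv_factor_sets}, into a straightforward diagram chase. First, by Remark~\ref{rem-each_cocycle_is_cohom_to_normalized}, every class in $H^2(G,A)$ has a normalized representative, so the natural composite
\[
	\psi:NZ^2(G,A)\hookrightarrow Z^2(G,A)\twoheadrightarrow H^2(G,A)
\]
is a surjective homomorphism with kernel $NZ^2(G,A)\cap B^2(G,A)$, i.e.\ $H^2(G,A)\cong NZ^2(G,A)/(NZ^2(G,A)\cap B^2(G,A))$. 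Composing the embedding of Proposition~\ref{prop-Z^2_is_embedded_into_pm} with the quotient map $\pi:pm(G)\to pM(G)$ gives a homomorphism $\varphi:NZ^2(G,A)\to pM(G)$, $\varphi(f)=[\sigma_f]$, and what has to be shown is that $\varphi$ factors through $\psi$ to yield an injective homomorphism $\overline{\varphi}:H^2(G,A)\to pM(G)$.

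For the factorisation it suffices to verify that $\ker\psi\subseteq\ker\varphi$; equivalently, that whenever two elements of $NZ^2(G,A)$ are cohomologous, their images in $pm(G)$ are equivalent under $\sim$. But this is exactly the content of one direction of Remark~\ref{rem-hom_cocycles_induce_equiv_factor_sets}. Hence $\overline{\varphi}:H^2(G,A)\to pM(G)$ is a well-defined homomorphism with $\overline{\varphi}([f])=[\sigma_f]$ for any normalized representative $f$ of $[f]$.

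Injectivity is the converse direction of the same remark: if $\overline{\varphi}([f_1])=\overline{\varphi}([f_2])$ with $f_1,f_2\in NZ^2(G,A)$ chosen as normalized representatives, then $\sigma_{f_1}\sim\sigma_{f_2}$ in $pm(G)$, so by Remark~\ref{rem-hom_cocycles_induce_equiv_factor_sets} the cocycles $f_1$ and $f_2$ are cohomologous, i.e.\ $[f_1]=[f_2]$ in $H^2(G,A)$; since by surjectivity of $\psi$ every class in $H^2(G,A)$ is represented by a normalized cocycle, this gives injectivity on all of $H^2(G,A)$. There is no real obstacle here: Remark~\ref{rem-hom_cocycles_induce_equiv_factor_sets} already packages both directions of the equivalence, and Remark~\ref{rem-each_cocycle_is_cohom_to_normalized} guarantees that the normalized cocycles are a sufficient reservoir of representatives; the only point requiring mild care is simply to recognise that the identity of the group of $pm(G)$ containing the image of $NZ^2(G,A)$ is $\sigma_{e_2}$, so that coboundaries indeed go to the identity of $pM(G)$.
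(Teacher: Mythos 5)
Your argument is correct and is essentially the paper's own proof, which consists of exactly the citation of Remarks~\ref{rem-each_cocycle_is_cohom_to_normalized} and~\ref{rem-hom_cocycles_induce_equiv_factor_sets}; you have simply made explicit the factorization $H^2(G,A)\cong NZ^2(G,A)/(NZ^2(G,A)\cap B^2(G,A))$ and the use of both directions of Remark~\ref{rem-hom_cocycles_induce_equiv_factor_sets} for well-definedness and injectivity.
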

\noindent This follows from Remarks~\ref{rem-each_cocycle_is_cohom_to_normalized} and~\ref{rem-hom_cocycles_induce_equiv_factor_sets}. 

Given an adjusted  $K$-linear partial $G$-module $A$, denote by   $R(G,A)$ the subgroup of $pM(G)$ formed by the equivalence classes of $K$-valued twistings related to $A$.

	\begin{rem}\label{rem-description_of_R(theta^Gamma,S^Gamma)}
		The  image of $H^2(G,A)$ in $pM(G)$ coincides with  $R(G,A).$
	\end{rem}
\noindent Indeed, this immediately follows from the proof of Proposition~\ref{prop-Z^2_is_embedded_into_pm} together with the fact that if $\tau $ is a twisting related to $\theta $ and ${\tau }'$ is a factor set equivalent to $\tau$, then  ${\tau }'$ is also a twisting related to $\theta .$ The latter is a direct consequence  of Definition~\ref{defn-twisted}.

	\begin{rem}\label{rem-R(theta^Gamma,S^Gamma)_lies_in_one_component}
		If an element of $R(G,A)$ belongs to some component $pM_D(G)$ of $pM(G)$, then $R(G,A)\subseteq pM_D(G)$.
	\end{rem}
\noindent For by (i) of Definition~\ref{defn-twisted} all twistings related to $A$ have the same domain $\{(x,y)\in G^2\mid A_x A_{xy}\ne 0_A\}$. 

	\begin{defn}\label{defn-tw_equivalence}
		Let $A$ and $A'$ be  $K$-linear partial $G$-modules. We shall say that $A$ and $A'$ are {\it twisting-equivalent} if the sets of $K$-valued twistings related to $A$ and $A'$ coincide.
	\end{defn} 

From  Definition~\ref{defn-twisted} we easily obtain the next sufficient condition of twisting-equivalence.
	
	 \begin{rem}
	 Given two $K$-linear partial $G$-modules $A$ and $A'$, if for all $x,y,z \in G$
	  $$
	  A_xA_{xy}A_{xyz} = 0_A   \Leftrightarrow  A'_x A'_{xy} A'_{xyz} = 0_{A'},
	  $$
	  then $A$ and $A'$ are  twisting-equivalent.
	 \end{rem}
		
The relation between the partial Schur multiplier and the partial cohomology is summarized in the next.

	\begin{thrm}\label{pM(G)_is_covered_by_R(theta^Gamma,S^Gamma)}
		Each component $pM_D(G)$ of $pM(G)$  is the union of the subgroups $R(G,A)\cong H^2(G,A),$ where $A$ runs over the representatives  of twisting-equivalence classes of  adjusted $K$-linear  partial $G$-modules such that 
	\[
		\{(x,y)\in G^2\mid A_xA_{xy}\ne 0_A\}=D.
	\]
	\end{thrm}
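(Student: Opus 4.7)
The plan is to decompose the statement into two tasks: identifying $R(G,A)$ with $H^2(G,A)$, and verifying the two inclusions between $pM_D(G)$ and the union over $A$. The first task is essentially done already: Corollary~\ref{cor-H^2_is_embedded_into_pM} gives the embedding $H^2(G,A) \hookrightarrow pM(G)$, and Remark~\ref{rem-description_of_R(theta^Gamma,S^Gamma)} identifies its image precisely as $R(G,A)$. So I only need to prove the equality of sets
\[
    pM_D(G) = \bigcup_A R(G,A),
\]
where $A$ ranges over representatives of twisting-equivalence classes of adjusted $K$-linear partial $G$-modules with $\{(x,y) \in G^2 \mid A_xA_{xy} \ne 0_A\} = D$.

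For the inclusion $\bigcup_A R(G,A) \subseteq pM_D(G)$, I would argue as follows. Fix an adjusted $K$-linear partial $G$-module $A$ with the prescribed domain condition. By Remark~\ref{rem-R(theta^Gamma,S^Gamma)_lies_in_one_component}, every class in $R(G,A)$ belongs to the single component of $pM(G)$ indexed by the common domain of all twistings related to $A$, which by condition (i) of Definition~\ref{defn-twisted} is precisely $\{(x,y) \mid A_xA_{xy} \ne 0_A\} = D$. Hence $R(G,A) \subseteq pM_D(G)$, and taking the union preserves this.

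For the reverse inclusion $pM_D(G) \subseteq \bigcup_A R(G,A)$, I would take an arbitrary class $[\sigma] \in pM_D(G)$ and choose a representative $\sigma \in pm(G)_D$; by definition $\sigma$ is the factor set of some partial projective representation $\Pi: G \to M$ with $\dom \sigma = D$. Using the construction recalled before Remark~\ref{rem-H^n(G,theta^Gamma,S^Gamma)_for_global_Gamma}, form the associated adjusted $K$-linear partial $G$-module $(A^\Pi, \theta^\Pi)$. The key verification is that
\[
    A^\Pi_x A^\Pi_{xy} \ne 0_{A^\Pi} \iff \sigma(x,y) \ne 0_K,
\]
which follows from the fact (cited from~\cite{DN} in the proof of Proposition~\ref{prop-Z^2_is_embedded_into_pm}) that the factor set $\sigma$ is itself a twisting related to $A^\Pi$ in the sense of Definition~\ref{defn-twisted}(i). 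Consequently the domain condition for $A^\Pi$ matches $D$, and by definition of $R(G, A^\Pi)$ we obtain $[\sigma] \in R(G, A^\Pi)$.

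Finally, I would note that the indexing of the union by twisting-equivalence classes rather than by all modules is harmless: if $A$ and $A'$ are twisting-equivalent, they give rise to the same set of $K$-valued twistings, hence to the same subgroup of $pM(G)$, so $R(G,A) = R(G,A')$. The only subtle point I anticipate is the equivalence $A^\Pi_x A^\Pi_{xy} \ne 0 \Leftrightarrow \sigma(x,y) \ne 0$; however this is essentially built into the construction of $(A^\Pi, \theta^\Pi)$ via the idempotents $\epsilon_x = \Pi(x)\Pi(x^{-1})\sigma(x^{-1},x)^{-1}$ together with the defining relations of a partial projective representation, so the main obstacle is bookkeeping rather than any new idea.
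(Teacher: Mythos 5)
Your proposal is correct and follows essentially the same route as the paper: both inclusions are established by observing that a factor set $\sigma\in pm_D(G)$ is a twisting related to $A^\Pi$ (so $[\sigma]\in R(G,A^\Pi)$ with the domain of $A^\Pi$ equal to $D$), combined with Remark~\ref{rem-R(theta^Gamma,S^Gamma)_lies_in_one_component} for containment in a single component and Corollary~\ref{cor-H^2_is_embedded_into_pM} together with Remark~\ref{rem-description_of_R(theta^Gamma,S^Gamma)} for the identification $R(G,A)\cong H^2(G,A)$. Your explicit remark that twisting-equivalent modules yield the same subgroup $R(G,A)$ is a minor point the paper leaves implicit, but the argument is the same.
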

	\begin{proof}
		Let $\sigma\in pm_D(G)$ be a factor set of a partial projective representation $\Pi:G\to M$. Then  $\sigma $ is a $K$-valued twisting related to $A^\Pi$, so its equivalence class lies in  $ R(G,A^\Pi).$  Hence $R(G,A^\Pi)\subseteq pM_D(G)$ by Remark~\ref{rem-R(theta^Gamma,S^Gamma)_lies_in_one_component}. The partial $G$-module $A^\Pi$ is adjusted by its de\-fi\-ni\-tion and $\{(x,y)\in G^2\mid A^\Pi_xA^\Pi_{xy}\ne 0_{A^\Pi}\}=\dom\sigma=D.$  
Finally,  $R(G,A^\Pi)\cong H^2(G,A^\Pi),$ thanks to Corollary~\ref{cor-H^2_is_embedded_into_pM}   and 
Remark~\ref{rem-description_of_R(theta^Gamma,S^Gamma)}.
	\end{proof}
	

\section{Partial modules and modules over Exel's monoid}\label{sect-modules_over_S(G)}

Recall that a semigroup $S$ is called {\it inverse} if for any $s\in S$ there is a unique $s^{-1}\in S$ (called {\it the inverse of $s$}) such that $ss^{-1}s=s$ and $s^{-1}ss^{-1}=s^{-1}$. A natural example of an inverse semigroup is the monoid $\cI X$ of all partial bijections (i.\,e. bijections between subsets) of a set $X$. The product $\varphi\psi$ in $\cI X$ is the composition $\varphi\circ\psi$ defined for all $x\in X$, such that $\varphi(\psi(x))$ makes sense. More precisely, $\varphi\psi$ is the bijection
	\[
		\varphi\circ\psi:\psi^{-1}(\ran\psi\cap\dom\varphi)\to\varphi(\ran\psi\cap\dom\varphi).
	\]
Note that the inverse of $\varphi$ is simply $\varphi^{-1}:\ran\varphi\to\dom\varphi$.

Following Exel~\cite{E1} by {\it an action of an inverse semigroup $S$ on $X$} we mean a homomorphism $S\to\cI X$ (in semigroup theory such homomorphisms are called representations of $S$, see~\cite[7.3]{Clifford-Preston-2}). If $S$ is a monoid, then the term ``action'' will always mean ``unital action'', i.\,e. a homomorphism of monoids $S\to\cI X$. It was proved in~\cite{E1} that there is a one-to-one correspondence between partial actions of a group $G$ on a set $X$ and (unital) actions of $\cS G$ on $X$, where $\cS G$ is the semigroup generated by $[x]$ ($x\in G$) subject to the following relations:
	\begin{enumerate}[(i)]
		\item $[x\m][x][y]=[x\m][xy]$; 
		\item $[x][y][y\m]=[xy][y\m]$;  
		\item $[x][1_G]=[x]$. 
	\end{enumerate}
It was shown in~\cite{E1} that   $\cS G$ is an inverse monoid with $1_{\cS G}=[1_G]$ and $[x]\m=[x\m]$. Moreover, any element of $\cS G$ can be expressed in the form $\e_{x_1}\dots\e_{x_n}[y]$, where $\e_{x_i}$ denotes $[x_i][x_i\m]\in E(\cS G)$, all $x_i$ are different, non-identity and not equal to $y$. This expression is unique up to a permutation of the idempotents $\e_{x_i}$. It follows that each idempotent of $\cS G$ has the form $\e_{x_1}\dots\e_{x_n}$ for some (uniquely defined) set $x_1,\dots,x_n\in G$. As it was mentioned in the introduction, $\cS G$ is isomorphic to the Birget-Rhodes expansion of $G$ \cite{KL}. 

Notice that, given a semigroup $A$, the composition of two partial isomorphisms of $A$ (i.\,e. isomorphisms between arbitrary  ideals of $A$)  is not a partial isomorphism in general, since the domain of the composition is not necessarily an ideal. However,  the set  $\cIui A$ of all isomorphisms between unital ideals of $A$ (i.\,e. ideals generated by central idempotents of $A$) forms an inverse semigroup. If $A$ is a monoid, then $\id_A\in\cIui A$, so $\cIui A$ is a monoid.

	\begin{defn}\label{defn-act_of_inv_semigr_on_semigr}
		Let $S$ be an inverse semigroup. An action of $S$ on a semigroup $A$ is a homomorphism of semigroups $\tau:S\to\cIui A$. If $S$ and $A$ are monoids, then $\tau$ is called unital, whenever it is a homomorphism of monoids.
	\end{defn}

By~\cite[Proposition 4.1]{E1} a partial action of $G$ on a set $X$ can be seen as a partial representation of $G$ in $ \cI X$.  This immediately implies that unital partial actions of $G$ on a monoid $A$ can be identified with partial representations of $G$ in $\cIui A$. Then~\cite[Proposition 2.2]{E1} yields the next.

	\begin{prop}\label{prop-Exel_th_for_act_on_semigr}
		There is a one-to-one correspondence between partial $G$-modules and unital actions of $\cS G$ on commutative monoids.
	\end{prop}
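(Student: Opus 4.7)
The plan is to stitch together two bijections already recorded in the excerpt. Fix a commutative monoid $A$. The key observation right before the statement is that unital partial actions of $G$ on $A$ correspond bijectively to partial representations $\pi \colon G \to \cIui A$ (this is the adaptation of \cite[Proposition 4.1]{E1} to the unital, monoid setting, where the domains $A_x$ must be unital ideals, hence the target is $\cIui A$ rather than a larger inverse semigroup of partial isomorphisms). On the other hand, Exel's \cite[Proposition 2.2]{E1} asserts that for any inverse monoid $T$, the partial representations $G \to T$ are in natural bijection with unital monoid homomorphisms $\cS G \to T$, obtained on generators by $[x] \mapsto \pi(x)$ and extended via the defining relations (i)--(iii) of $\cS G$. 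Applying this with $T = \cIui A$ finishes the correspondence by the very Definition~\ref{defn-act_of_inv_semigr_on_semigr}.

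Thus the forward map I would exhibit sends a unital partial action $\theta = \{\theta_x \colon A_{x^{-1}} \to A_x\}_{x \in G}$ to the unique unital homomorphism $\tau_\theta \colon \cS G \to \cIui A$ satisfying $\tau_\theta([x]) = \theta_x$. The inverse map takes a unital action $\tau \colon \cS G \to \cIui A$ and defines $\theta_x = \tau([x])$, with $A_x = \ran \theta_x$; one recovers the central idempotent $1_x$ as the (central) idempotent generating the unital ideal $A_x$, or equivalently as the idempotent of $A$ that acts as the identity on $\tau(\e_x)$'s image.

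The steps I would carry out are: (1) check that $\pi(x) := \theta_x$ really is a partial representation of $G$ into $\cIui A$ by reading off properties (i)--(iii) of a partial action as the partial-representation axioms of \cite{E1}, which is essentially a matter of rewriting (i'), (ii'), (iii'); (2) invoke \cite[Proposition 2.2]{E1} to get the unique extension $\tau_\theta$ and note that it is unital since $\theta_1 = \id_A$ forces $\tau_\theta([1_G]) = \id_A$; (3) for the converse direction, given $\tau$, verify that the family $\{\tau([x])\}$ is a partial representation of $G$ (automatic, since the relations imposed in $\cS G$ are exactly what a partial representation must satisfy), and that each $\tau([x])$ is a partial isomorphism between \emph{unital} ideals, which is built into the definition of $\cIui A$; (4) check that the two assignments are mutually inverse, which is immediate because both are determined on the generators $[x]$.

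The only subtle point — and what I would flag as the main bookkeeping obstacle — is checking that the correspondence really respects the unital structure on both sides: one must verify that the ideals $A_x = \ran \tau([x])$ attached to a unital action of $\cS G$ are generated by central idempotents of $A$, and conversely that starting from a unital partial action the assignment $[x] \mapsto \theta_x$ indeed lands in $\cIui A$ (not merely in a larger inverse semigroup of partial bijections of the underlying set). Both follow from the definition of $\cIui A$ together with $\theta_1 = \id_A$, but writing this down carefully is the crux; after that, the proposition is a direct application of Exel's theorem.
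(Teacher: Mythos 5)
Your proposal is correct and follows essentially the same route as the paper: identify unital partial actions of $G$ on $A$ with partial representations $G\to\cIui A$ (the unital-ideal adaptation of Exel's Proposition~4.1) and then apply Exel's Proposition~2.2 with target $\cIui A$, the two assignments being mutually inverse because everything is determined on the generators $[x]$. The explicit formula $\tau_{\e_{x_1}\dots\e_{x_n}[y]}(a)=\theta_y(a)$ on $1_{y\m x_1}\dots 1_{y\m x_n}1_{y\m}A$ that the paper records afterwards is exactly the extension $\tau_\theta$ you describe.
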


\noindent More precisely, let $(A,\0)\in\pMod G.$ Then the corresponding action $\tau $ of $\cS G$ on $A$  is given as follows. For arbitrary   $s= \e_{x_1}\dots\e_{x_n}[y] \in \cS G,$ $\tau_s$ is the bijection 
	\[
		1_{y\m x_1}\dots 1_{y\m x_n}1_{y\m } A \ni a  \mapsto  \0_y(a) \in  1_{x_1}\dots 1_{x_n}1_y A.
	\]   
This correspondence is in fact an isomorphism of categories as explained  below.

	\begin{rem}\label{rem-from_morph_p_mod_G_to_morph_act_S(G)}
		Let $A,A'\in\pMod G$, $\tau$ and $\tau'$ be the corresponding actions of $\cS G$ on the monoids $A$ and $A'$. Denote by $1_s$ and $1_s'$ the identity elements of $\ran{\tau_s}$ and $\ran{\tau_s'}$, respectively ($s\in\cS G$). Then a homomorphism of semigroups $\varphi:A\to A'$ is a morphism of partial $G$-modules if and only if
	\begin{enumerate}[(i)]
		\item $\varphi(1_s)=1_s'$, $s\in\cS G$ (so $\varphi(\dom{\tau_s})\subseteq\dom{\tau_s'}$, $s\in\cS G$);
		\item $\varphi\circ\tau_s=\tau_s'\circ\varphi$ on $\dom{\tau_s}$, $s\in\cS G$.
	\end{enumerate}
	\end{rem}

It is reasonable to give the next.

	\begin{defn}\label{defn-morph_of_act_of_inv_semi}
		Let $\tau$ and $\tau'$ be actions of an inverse semigroup $S$ on semigroups $A$ and $A'$, respectively. {\it A morphism}  $\tau \to \tau'$ is a homomorphism $\varphi:A\to A'$, such that the conditions (i) and (ii) from Remark~\ref{rem-from_morph_p_mod_G_to_morph_act_S(G)} hold (as above $\ran\tau_s=1_sA$ and $\ran\tau_s'=1_s'A'$, $s\in S$).
	\end{defn}

	The category of actions (unital actions) of an inverse semigroup (monoid) $S$ on commutative semigroups (monoids) will be denoted by $\A S$.

	\begin{prop}\label{prop-pMod(G)_is_isom_to_A(S(G))}
		The categories $\pMod G$ and $\A{\cS G}$ are isomorphic.
	\end{prop}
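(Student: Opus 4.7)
The plan is to exhibit mutually inverse functors between $\pMod G$ and $\A{\cS G}$ whose action on objects is the one already established in Proposition~\ref{prop-Exel_th_for_act_on_semigr} and whose action on morphisms is the identity on underlying set-maps. Since the object correspondence is already a bijection, the whole task reduces to a bijective correspondence between the hom-sets.

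First I would define a functor $F:\pMod G\to\A{\cS G}$ as follows. On objects, $F$ sends $(A,\0)$ to the action $\tau$ of $\cS G$ on $A$ described after Proposition~\ref{prop-Exel_th_for_act_on_semigr}, namely $\tau_{\e_{x_1}\cdots\e_{x_n}[y]}$ is the isomorphism $1_{y\m x_1}\cdots 1_{y\m x_n}1_{y\m}A\to 1_{x_1}\cdots 1_{x_n}1_yA$ given by $a\mapsto\0_y(a)$. On morphisms, $F(\varphi)=\varphi$ as a map of underlying monoids. To see this is well-defined I would appeal directly to Remark~\ref{rem-from_morph_p_mod_G_to_morph_act_S(G)}: its ``only if'' direction says precisely that every morphism $\varphi:(A,\0)\to(A',\0')$ of partial $G$-modules satisfies $\varphi(1_s)=1_s'$ and $\varphi\circ\tau_s=\tau_s'\circ\varphi$ on $\dom{\tau_s}$ for every $s\in\cS G$, which is exactly the content of Definition~\ref{defn-morph_of_act_of_inv_semi}. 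Functoriality (preservation of composition and identities) is immediate because $F$ does not alter the underlying map.

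In the opposite direction I would define $F':\A{\cS G}\to\pMod G$ on objects by the inverse of the bijection from Proposition~\ref{prop-Exel_th_for_act_on_semigr}: given an action $\tau$ of $\cS G$ on a commutative monoid $A$, set $A_x=\ran\tau_{\e_x}=1_{[x][x\m]}A$, let $1_x$ be the identity of $A_x$, and let $\0_x=\tau_{[x]}$. That this yields a unital partial $G$-module is part of Proposition~\ref{prop-Exel_th_for_act_on_semigr}. On morphisms, $F'(\varphi)=\varphi$. Well-definedness here is the ``if'' direction of Remark~\ref{rem-from_morph_p_mod_G_to_morph_act_S(G)}: conditions (i) and (ii) of Definition~\ref{defn-morph_of_act_of_inv_semi}, applied in particular to $s=[x]$ and $s=\e_x$, force $\varphi(A_x)\subseteq A_x'$ and $\varphi\circ\0_x=\0_x'\circ\varphi$ on $A_{x\m}$, and also guarantee that the restriction of $\varphi$ to each $A_x$ preserves the identity $1_x$, so that it is a homomorphism of monoids $A_x\to A_x'$ as required by Definition~\ref{defn-morph_of_part_mod}.

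Finally, since $F$ and $F'$ both act as the identity on underlying set-maps and their object assignments are mutually inverse bijections, the compositions $F'\circ F$ and $F\circ F'$ are the respective identity functors, which establishes that $\pMod G$ and $\A{\cS G}$ are isomorphic categories. I do not expect any serious obstacle: the only thing to be careful about is verifying that Remark~\ref{rem-from_morph_p_mod_G_to_morph_act_S(G)} furnishes an ``if and only if'' statement that precisely matches Definition~\ref{defn-morph_of_act_of_inv_semi}, since $\cS G$ is generated, as a monoid, by the elements $[x]$ (equivalently by the $[x]$ together with the idempotents $\e_x$), so conditions (i) and (ii) for all $s\in\cS G$ are equivalent to their restrictions to these generators.
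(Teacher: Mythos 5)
Your proposal is correct and follows essentially the same route as the paper, which likewise obtains the isomorphism by combining the object bijection of Proposition~\ref{prop-Exel_th_for_act_on_semigr} with the ``if and only if'' characterization of morphisms in Remark~\ref{rem-from_morph_p_mod_G_to_morph_act_S(G)}, the functor being the identity on underlying maps. Your extra care about the generators $[x]$ and $\e_x$ is a reasonable elaboration but not a departure from the paper's argument.
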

\noindent Indeed, Proposition~\ref{prop-Exel_th_for_act_on_semigr} and Remark~\ref{rem-from_morph_p_mod_G_to_morph_act_S(G)} give a functor $\pMod G\to\A{\cS G}$ which is bijective on objects and identity on morphisms.

Next we are going to give a characterization of actions of an inverse semigroup $S$ on a semigroup $A$ in terms of endomorphisms of $A$. 
	\begin{lem}\label{lem-im_of_ident_under_act_of_inv_semigr} 
		Let $\tau:S\to\cIui A$, $s\mapsto\tau_s$, be an action of an inverse semigroup $S$ on a semigroup $A$ and $1_s$ denote the identity of $\ran\tau_s$. Then
		\begin{enumerate}[(i)]
			\item $\tau_e=\id_{1_eA}$ for all $e\in E(S)$;
			\item $\tau_s(1_{s^{-1}}1_t)=1_{st}$ for all $s,t\in S$.
		\end{enumerate}
	\end{lem}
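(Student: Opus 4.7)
The plan is to exploit three general facts: that $\tau$, being a homomorphism between inverse semigroups, preserves inverses (so $\tau_{s^{-1}}=\tau_s^{-1}$); that the idempotents of the symmetric inverse monoid of partial bijections are precisely the partial identities; and that any semigroup isomorphism between two monoids necessarily sends the (unique) semigroup identity of its domain to that of its codomain.

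For (i), I would first note that $e=e^2$ forces $\tau_e=\tau_e\circ\tau_e$ in $\cIui A$, so $\tau_e$ is an idempotent of this inverse semigroup. Idempotents of $\cIui A$ are necessarily partial identities $\id_I$ on unital ideals $I$ of $A$ (since idempotent partial bijections are partial identities, and elements of $\cIui A$ are isomorphisms between unital ideals). Since $1_e$ is by definition the identity of $\ran\tau_e$, we must have $I=1_eA$, and hence $\tau_e=\id_{1_eA}$.

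For (ii), the first step is to pin down $\dom\tau_s$. The homomorphism property gives $\tau_{s^{-1}}=\tau_s^{-1}$, so $\dom\tau_s=\ran\tau_{s^{-1}}=1_{s^{-1}}A$. Next I would compute the range of the composition $\tau_s\circ\tau_t$, which equals $\tau_s(\ran\tau_t\cap\dom\tau_s)=\tau_s(1_tA\cap 1_{s^{-1}}A)$. Since $1_t$ and $1_{s^{-1}}$ are central idempotents of $A$, the intersection of these unital ideals is the unital ideal $1_{s^{-1}}1_tA$, whose identity element is $1_{s^{-1}}1_t$. On the other hand, $\tau_s\tau_t=\tau_{st}$ yields $\ran(\tau_s\circ\tau_t)=\ran\tau_{st}=1_{st}A$. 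The restriction of $\tau_s$ to $1_{s^{-1}}1_tA$ is therefore a semigroup isomorphism onto $1_{st}A$, and as both are unital monoids, it must send the identity to the identity, giving $\tau_s(1_{s^{-1}}1_t)=1_{st}$.

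The only point requiring genuine care is the bookkeeping around the convention: $1_s$ denotes the identity of $\ran\tau_s$, not of $\dom\tau_s$, so one has to invoke $\tau_{s^{-1}}=\tau_s^{-1}$ to recognise that the identity of $\dom\tau_s$ is $1_{s^{-1}}$. Beyond this, the argument is a direct manipulation of partial bijections and unital ideals, and I do not anticipate any serious obstacle.
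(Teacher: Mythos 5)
Your proof is correct and follows essentially the same route as the paper: part (i) via the observation that an idempotent partial bijection must be a partial identity, and part (ii) via the computation $\ran(\tau_s\circ\tau_t)=\tau_s(1_{s^{-1}}1_tA)=1_{st}A$. You are merely more explicit than the paper about the final step of (ii), namely that a semigroup isomorphism between unital ideals must carry identity to identity.
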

	\begin{proof}
		Equality (i) follows by observing that $\tau_e$ is a bijection of $\dom\tau_e=\ran\tau_e$, which coincides with its square. For (ii) note that 
	\[
	    1_{st}A=\ran\tau_{st}=\ran{(\tau_s\circ\tau_t)}=\tau_s(\dom\tau_s\cap\ran\tau_t)=\tau_s(1_{s\m} A\cap 1_tA)=\tau_s(1_{s\m}1_t A).
	\]
Therefore, $1_{st}=\tau_s(1_{s\m}1_t)$.
	\end{proof}

For any semigroup $A$ denote by $C(A)$ the center of $A$. 

	\begin{thrm}\label{thrm-from_part_iso_to_endo}
		Let $S$ be an inverse semigroup and $A$ a semigroup. There is a one-to-one correspondence between actions of $S$ on $A$ and pairs $(\lambda,\alpha)$, where $\lambda$ is a homomorphism $S\to\End A$, $s\mapsto\lambda_s$, and $\alpha$ is a homomorphism $E(S)\to E(C(A))$ such that
	\begin{enumerate}[(i)]
		\item $\lambda_e(a)=\alpha(e)a$ for all $e\in E(S)$ and $a\in A$;
		\item $\lambda_s(\alpha(e))=\alpha(ses^{-1})$ for all $s\in S$ and $e\in E(S)$.
	\end{enumerate}
	\end{thrm}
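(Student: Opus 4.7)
The plan is to construct maps in both directions and verify they are mutual inverses. First, given an action $\tau \colon S \to \cIui A$, I would set $\alpha(e) := 1_e$ for $e \in E(S)$ and $\lambda_s(a) := \tau_s(1_{s^{-1}} a)$ for $s \in S$, $a \in A$. Since $1_{s^{-1}}$ is a central idempotent and $\tau_s$ is a semigroup isomorphism on its unital domain $1_{s^{-1}} A$, each $\lambda_s$ is a semigroup endomorphism of $A$. Condition (i) is immediate from Lemma~\ref{lem-im_of_ident_under_act_of_inv_semigr}(i) together with $e^{-1} = e$, and Lemma~\ref{lem-im_of_ident_under_act_of_inv_semigr}(ii) makes $\alpha$ a homomorphism landing in $E(C(A))$ (centrality comes for free from unitality of the ideals).

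For the homomorphism property $\lambda_{st} = \lambda_s \circ \lambda_t$, the plan is to rewrite $1_{(st)^{-1}} a = 1_{t^{-1} s^{-1}} \cdot 1_{t^{-1}} a$, using $1_{t^{-1} s^{-1}} = \tau_{t^{-1}}(1_t 1_{s^{-1}}) \le 1_{t^{-1}}$ from Lemma~\ref{lem-im_of_ident_under_act_of_inv_semigr}(ii), then apply $\tau_t$ on $1_{t^{-1}} A$ using $\tau_t(1_{t^{-1} s^{-1}}) = 1_t 1_{s^{-1}}$ (a consequence of $\tau_t \tau_{t^{-1}} = \id_{1_t A}$), and finally apply $\tau_s$ on $1_{s^{-1}} A$; comparing with $\tau_{st} = \tau_s \circ \tau_t$ yields the equality. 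Condition (ii) reduces to $\lambda_s(\alpha(e)) = \tau_s(1_{s^{-1}} 1_e) = 1_{se}$ by Lemma~\ref{lem-im_of_ident_under_act_of_inv_semigr}(ii), combined with the identity $1_{se} = 1_{ses^{-1}}$ which follows from $(se)(se)^{-1} = ses^{-1}$ (so $\ran \tau_{se} = \ran \tau_{ses^{-1}}$).

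Conversely, given $(\lambda, \alpha)$ satisfying (i) and (ii), I would set $e_s := \alpha(ss^{-1})$ and $f_s := \alpha(s^{-1}s)$, and define $\tau_s$ as the restriction $\lambda_s|_{f_s A}$. The identity $s = ss^{-1}s$ together with (i) gives $\lambda_s(a) = \lambda_s(\lambda_{s^{-1}s}(a)) = \lambda_s(f_s a)$, so $\lambda_s$ factors through $f_s A$, and (ii) then forces $\lambda_s(f_s A) \subseteq e_s A$. Symmetric reasoning shows $\lambda_{s^{-1}}|_{e_s A}$ is a two-sided inverse, hence $\tau_s \in \cIui A$. Multiplicativity $\tau_s \tau_t = \tau_{st}$ reduces to the domain check $\tau_t^{-1}(e_t A \cap f_s A) = f_{st} A$, which follows from (ii) and the inverse-semigroup identity $t^{-1}(tt^{-1})(s^{-1}s)t = (st)^{-1}(st)$; equality of values on this domain is just $\lambda_s \lambda_t = \lambda_{st}$. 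Mutual inversion is straightforward: $\alpha'(e) = 1_e = \alpha(e)$ and $\lambda'_s(a) = \tau_s(1_{s^{-1}} a) = \lambda_s(f_s a) = \lambda_s(a)$; in the other direction, $\dom \tau'_s = 1_{s^{-1}s} A = 1_{s^{-1}} A = \dom \tau_s$ by the same $\mathcal R$-class argument, and $\tau'_s$ agrees with $\tau_s$ on that domain.

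The main obstacle is the delicate interplay between the truncated-domain composition in $\cIui A$ and the everywhere-defined composition in $\End A$. The keystone that unlocks both directions is the identity $1_{se} = 1_{ses^{-1}}$, which rests on the idempotency of $ses^{-1}$ in $S$ and the consequent $\mathcal R$-equivalence $se \mathrel{\mathcal R} ses^{-1}$; this single fact underlies both the forward verification of (ii) and the backward domain computation, so getting it cleanly isolated early will streamline the rest of the argument.
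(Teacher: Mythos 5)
Your proposal is correct and follows essentially the same route as the paper: the same formulas $\alpha(e)=1_e$, $\lambda_s(a)=\tau_s(1_{s^{-1}}a)$ in one direction and $\tau_s=\lambda_s|_{\alpha(s^{-1}s)A}$ in the other, with the verifications resting on the same auxiliary lemma about $\tau_e$ and $\tau_s(1_{s^{-1}}1_t)=1_{st}$. The only differences are cosmetic reorganizations (e.g.\ deriving $\lambda_{st}=\lambda_s\circ\lambda_t$ starting from $\lambda_{st}$, and proving condition (ii) via $1_{se}=1_{ses^{-1}}$ from $\ran\tau_u=\ran\tau_{uu^{-1}}$ instead of inserting $\tau_e$), which do not change the substance of the argument.
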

	\begin{proof}
		Let $\tau:S\to\cIui A$, $s\mapsto\tau_s$, be an action of $S$ on $A$. Obviously, the identity $1_s$ of $\ran\tau_s$ belongs to $E(C(A))$, so we may define $\alpha^\tau:E(S)\to E(C(A))$ by 
	\begin{equation}\label{eq-alpha^theta}
		\alpha^\tau(e)=1_e.
	\end{equation}
According to (i) and (ii) of Lemma~\ref{lem-im_of_ident_under_act_of_inv_semigr} we have $1_{ef}=\tau_e(1_e1_f)=1_e1_f$, hence $\alpha^\tau$ is a homo\-mor\-phism. Furthermore, for any $s\in S$ and $a\in A$ set 
	\begin{equation}\label{eq-from_part_iso_to_endo}
		\lambda^\tau_s(a)=\tau_s(1_{s^{-1}}a).
	\end{equation} 
The right-hand side of~\eqref{eq-from_part_iso_to_endo} makes sense, because $1_{s^{-1}}$ is the identity of $\ran\tau_{s^{-1}}=\dom\tau_s$ and $\dom\tau_s$ is an ideal. Since $1_{s^{-1}}$ is a central idempotent and $\tau_s$ is a homomorphism, we see that
	\[
		\lambda^\tau_s(ab)=\tau_s(1_{s^{-1}}ab)=\tau_s(1_{s^{-1}}a\cdot 1_{s^{-1}}b)=
\tau_s(1_{s^{-1}}a)\tau_s(1_{s^{-1}}b)=\lambda^\tau_s(a)\lambda^\tau_s(b),
	\]
so $\lambda^\tau_s\in\End A$. To check whether $\lambda^\tau:S\to\End A$, $s\mapsto\lambda^\tau_s$, is a homomorphism, use (ii) of Lemma~\ref{lem-im_of_ident_under_act_of_inv_semigr}:
	\[
		\lambda^\tau_s(\lambda^\tau_t(a))
		=\tau_s(1_{s^{-1}}\tau_t(1_{t^{-1}}a))
		=\tau_s(\tau_t(1_{t^{-1}}1_{t^{-1}s^{-1}})\tau_t(1_{t^{-1}}a))
		=\tau_s(\tau_t(1_{t^{-1}}1_{t^{-1}s^{-1}}a)),
	\]
which is $\tau_{st}(1_{t^{-1}}1_{t^{-1}s^{-1}}a)$, because $1_{t^{-1}}1_{t^{-1}s^{-1}}a\in\dom\tau_{st}$. It remains to note that
	\[
		\tau_{st}(1_{t^{-1}}1_{t^{-1}s^{-1}}a)
		=\tau_{st}(1_{t^{-1}}1_{t^{-1}s^{-1}})\tau_{st}(1_{t^{-1}s^{-1}}a)
		=1_s\tau_{st}(1_{t^{-1}s^{-1}}a)
		=\tau_{st}(1_{t^{-1}s^{-1}}a)
		=\lambda^\tau_{st}(a).
	\]
Here we again applied (ii) of Lemma~\ref{lem-im_of_ident_under_act_of_inv_semigr} and the fact that $\ran\tau_{st}\subseteq\ran\tau_s$.

Now we verify that the pair $(\lambda^\tau,\alpha^\tau)$ satisfies (i) and (ii). The property (i) obviously follows from (i) of Lemma~\ref{lem-im_of_ident_under_act_of_inv_semigr}. For (ii) observe that
	\[
		\lambda^\tau_s(1_e) =\tau_s(1_{s^{-1}}1_e) =\tau_s(1_{s^{-1}}\tau_e(1_{s^{-1}}1_e))=
\tau_s(1_{s^{-1}}1_{es^{-1}})=1_{ses^{-1}},
	\] 
using (i) and (ii) of Lemma~\ref{lem-im_of_ident_under_act_of_inv_semigr}.

Conversely, given a pair $(\lambda,\alpha)$ with (i) and (ii), define $1_s\in E(C(A))$ by 
	\begin{equation}\label{eq-1_x=alpha(xx^-1)}
		1_s=\alpha(ss^{-1}).
	\end{equation}
Note that in view of (ii)
	\[
		\lambda_s(1_{s^{-1}})=\lambda_s(\alpha(s^{-1}s))=\alpha(ss^{-1}ss^{-1})=\alpha(ss^{-1})=1_s
	\]
and hence $\lambda_s(1_{s^{-1}}A)\subseteq 1_sA$. Moreover, $\lambda_s|_{1_{s^{-1}}A}$ is a bijection of $1_{s^{-1}}A$ onto $1_sA$ with inverse $\lambda_{s^{-1}}|_{1_sA}$, because
	\[
		(\lambda_s\circ\lambda_{s^{-1}})|_{1_sA}=\lambda_{ss^{-1}}|_{1_sA}=\id_{1_sA},
	\] 
 by (i) and similarly $(\lambda_{s^{-1}}\circ\lambda_s)|_{1_{s^{-1}}A}=\id_{1_{s^{-1}}A}$. Hence, we may define $\tau^{(\lambda,\alpha)}:S\to\cIui A$ by $\tau^{(\lambda,\alpha)}_s=\lambda_s|_{1_{s^{-1}}A}$. To prove that $\tau^{(\lambda,\alpha)}$ is a homomorphism, first observe that
	\begin{align*}
		\dom{(\tau^{(\lambda,\alpha)}_s\circ\tau^{(\lambda,\alpha)}_t)}&=
\lambda_{t^{-1}}(1_{s^{-1}}A\cap 1_tA)=\lambda_{t^{-1}}(1_{s^{-1}}1_tA)\\
&=\lambda_{t^{-1}}(\alpha(s^{-1}s))1_{t^{-1}}A=
\alpha(t^{-1}s^{-1}st)\alpha(t^{-1}t)A,
	\end{align*}
which is $\alpha(t^{-1}s^{-1}st)A=1_{t^{-1}s^{-1}}A=\dom\tau^{(\lambda,\alpha)}_{st}$, because $\alpha$ is a homomorphism. Furthermore
	\[
		\lambda_t(1_{t^{-1}s^{-1}})=\lambda_t(\alpha(t^{-1}s^{-1}st))= \alpha(t(t\m s^{-1}st) t\m )= \alpha(s^{-1}st t\m )= 1_{s^{-1}}1_t.
	\]
Then since $1_{t^{-1}s^{-1}}=1_{t^{-1}s^{-1}}1_{t^{-1}},$ 
for any $a\in\dom\tau^{(\lambda,\alpha)}_{st}$ we have
	\begin{align*}
		(\tau^{(\lambda,\alpha)}_s \circ \tau^{(\lambda,\alpha)}_t)(a)&=
\tau^{(\lambda,\alpha)}_s(\tau^{(\lambda,\alpha)}_t(1_{t^{-1}s^{-1}}a))=
\tau^{(\lambda,\alpha)}_s(\tau^{(\lambda,\alpha)}_t(1_{t^{-1}s^{-1}}1_{t^{-1}}a))\\
&=\tau^{(\lambda,\alpha)}_s(\lambda_t(1_{t^{-1}s^{-1}}1_{t^{-1}}a))=
\tau^{(\lambda,\alpha)}_s(1_{s^{-1}}\lambda_t(1_{t^{-1}}a))\\
&=\lambda_s(1_{s^{-1}}\lambda_t(1_{t^{-1}}a))=1_s\lambda_{st}(1_{t^{-1}}a)\\
&=1_s\lambda_{st}(a)=1_s\tau^{(\lambda,\alpha)}_{st}(a)=\tau^{(\lambda,\alpha)}_{st}(a).
	\end{align*}

It remains to show that $\tau\mapsto(\lambda^\tau,\alpha^\tau)$ and $(\lambda,\alpha)\mapsto\tau^{(\lambda,\alpha)}$ are mutually inverse. Consider $\tau^{(\lambda^\tau,\alpha^\tau)}$. For any $s\in S$ the identity of $\dom\tau^{(\lambda^\tau,\alpha^\tau)}_s$ is $\alpha^\tau(s^{-1}s)=1_{s^{-1}s}$, which is the identity of $\dom\tau_{ss^{-1}}$. But using (ii) of Lemma~\ref{lem-im_of_ident_under_act_of_inv_semigr} we see that 
	\[
		1_{s^{-1}}=\tau_{s^{-1}}(1_s)=\tau_{s^{-1}}(1_s\cdot 1_s)=1_{s^{-1}s},
	\]
so $\dom\tau^{(\lambda^\tau,\alpha^\tau)}_s=\dom\tau_s$. By our construction 
	\[
		\tau^{(\lambda^\tau,\alpha^\tau)}_s=\lambda^\tau_s|_{1_{s^{-1}s}A}=
\lambda^\tau_s|_{1_{s^{-1}}A}.
	\]
But $\lambda^\tau_s(a)=\tau_s(1_{s^{-1}}a)$ and $a=1_{s^{-1}}a$ for $a\in 1_{s^{-1}}A$, hence $\lambda^\tau_s=\tau_s$ on $1_{s^{-1}}A ,$ and consequently, $\tau^{(\lambda^\tau,\alpha^\tau)}=\tau .$

Now given $(\lambda,\alpha)$, we  show that $\alpha^{\tau^{(\lambda,\alpha)}}=\alpha$ and $\lambda^{\tau^{(\lambda,\alpha)}}=\lambda$. For any $e\in E(S)$ the image $\alpha^{\tau^{(\lambda,\alpha)}}(e)$ is the identity of $\ran\tau^{(\lambda,\alpha)}_e$, which is $\alpha(ee^{-1})=\alpha(e)$. Furthermore, using (i) and (ii) we get
	\begin{align*}
		\lambda^{\tau^{(\lambda,\alpha)}}_s(a)&=\tau _s^{(\lambda,\alpha)}(\alpha(s^{-1}s)a)=
\lambda_s|_{\alpha(s^{-1}s)A}(\alpha(s^{-1}s)a)=\lambda_s(\alpha(s^{-1}s)a)
=\lambda_s(\alpha(s^{-1}s))\lambda_s(a)\\
&=\alpha(ss^{-1}ss^{-1})\lambda_s(a)=\alpha(ss^{-1})\lambda_s(a)
=\lambda_{ss^{-1}}(\lambda_s(a))=\lambda_{ss^{-1}s}(a)=\lambda_s(a).
	\end{align*}
	\end{proof}

	\begin{rem}\label{rem-unital-tau-in-terms-of-(lambda,alpha)}
	Under the conditions of Theorem~\ref{thrm-from_part_iso_to_endo} assume that $A$ is a monoid. Then the unital actions of $S$ on $A$ correspond to those pairs $(\lambda,\alpha)$, in which $\lambda$ and $\alpha$ are homomorphisms of monoids.
	\end{rem}

	\begin{defn}\label{defn-S-mod}
		Given an inverse semigroup $S$, {\it an $S$-module} is a triple  $(A,\lambda,\alpha)$ (often written shortly as $A$), where $A$ is a commutative semigroup, $\lambda:S\to\End A$ and $\alpha:E(S)\to E(A)$ are homomorphisms satisfying (i) and (ii) of Theorem~\ref{thrm-from_part_iso_to_endo}. If $\alpha$ is an isomorphism, then we say that $A$ is {\it strict}.
	\end{defn}

	If $S$ is a monoid, then any $S$-module is assumed to be unital in the sense that $A$ is a monoid and $\lambda$, $\alpha$ are homomorphisms of monoids.


	\begin{prop}\label{prop-from_morph_of_A(S(G))_to_morph_Mod(S(G))}
		Let $\tau$ and $\tau'$ be actions of an inverse semigroup $S$ on semigroups $A$ and $A'$, respectively. Set $(\lambda,\alpha)=(\lambda^\tau,\alpha^\tau)$ and $(\lambda',\alpha')=(\lambda^{\tau'},\alpha^{\tau'})$. Then a homomorphism $\varphi:A\to A'$ is a morphism  $\tau\to\tau'$ if and only if it satisfies
	\begin{enumerate}[(i)]
		\item $\varphi\circ\alpha=\alpha'$ on $E(S)$; 
		\item $\varphi\circ\lambda_s=\lambda_s'\circ\varphi$ for all $s\in S$.
	\end{enumerate}
	\end{prop}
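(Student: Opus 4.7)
The plan is to verify the equivalence by unpacking the formulas $\alpha^\tau(e)=1_e$ and $\lambda^\tau_s(a)=\tau_s(1_{s^{-1}}a)$ that define the passage from $\tau$ to $(\lambda,\alpha)$, as established in Theorem~\ref{thrm-from_part_iso_to_endo}. Both directions are essentially translation exercises; the only mild subtlety is keeping track of the fact that $\tau_s$ is defined only on the ideal $1_{s^{-1}}A$, whereas $\lambda_s$ is a total endomorphism of $A$.

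For the forward implication, assume $\varphi\colon A\to A'$ is a morphism $\tau\to\tau'$, i.e.\ $\varphi(1_s)=1_s'$ and $\varphi\circ\tau_s=\tau_s'\circ\varphi$ on $1_{s^{-1}}A$ (cf.\ Definition~\ref{defn-morph_of_act_of_inv_semi}). Condition~(i) of the proposition is then immediate, since $\alpha(e)=1_e$ and $\alpha'(e)=1_e'$ for $e\in E(S)$. For~(ii), compute
\[
\varphi(\lambda_s(a))=\varphi(\tau_s(1_{s^{-1}}a))=\tau_s'(\varphi(1_{s^{-1}}a))=\tau_s'(1_{s^{-1}}'\varphi(a))=\lambda_s'(\varphi(a)),
\]
where the second equality uses that $1_{s^{-1}}a\in\dom\tau_s$ and the intertwining property, and the third uses multiplicativity of $\varphi$ together with $\varphi(1_{s^{-1}})=1_{s^{-1}}'$.

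For the converse, assume (i) and (ii) of the proposition. Taking $e=ss^{-1}\in E(S)$ in (i) yields
\[
\varphi(1_s)=\varphi(\alpha(ss^{-1}))=\alpha'(ss^{-1})=1_s',
\]
which is condition~(i) of a morphism of actions. For condition~(ii), fix $a\in\dom\tau_s=1_{s^{-1}}A$, so that $a=1_{s^{-1}}a$ and hence $\tau_s(a)=\lambda_s(a)$ by the very definition of $\lambda_s$. By the just-proved identity $\varphi(1_{s^{-1}})=1_{s^{-1}}'$ and multiplicativity of $\varphi$, we have $\varphi(a)=1_{s^{-1}}'\varphi(a)\in\dom\tau_s'$, so $\tau_s'(\varphi(a))=\lambda_s'(\varphi(a))$. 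Combining,
\[
\varphi(\tau_s(a))=\varphi(\lambda_s(a))=\lambda_s'(\varphi(a))=\tau_s'(\varphi(a)),
\]
which establishes the remaining intertwining condition.

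The main obstacle—if any—is the bookkeeping between the partial map $\tau_s$ and the total map $\lambda_s$, i.e.\ remembering that the equality $\tau_s(a)=\lambda_s(a)$ holds precisely on $\dom\tau_s$ and that $\varphi$ sends $\dom\tau_s$ into $\dom\tau_s'$ (which is exactly what condition~(i) of a morphism of actions buys us, and which is automatic from (i) of the proposition). No further calculation is required.
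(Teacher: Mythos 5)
Your proof is correct and follows essentially the same route as the paper: both arguments reduce the claim to the identities $\alpha(e)=1_e$, $1_s=\alpha(ss^{-1})$ and $\lambda_s(a)=\tau_s(1_{s^{-1}}a)$ from Theorem~\ref{thrm-from_part_iso_to_endo}, the only difference being that you spell out the two implications separately where the paper states the two equivalences directly.
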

	\begin{proof}
		Thanks to~\eqref{eq-1_x=alpha(xx^-1)} condition (i) is equivalent to $\varphi(1_s)=1_s'$, while (ii) is the same as $\varphi(\tau_s(1_{s\m}a))=\tau_s'(1_{s\m}'\varphi(a))$ for all $a\in A$ by~\eqref{eq-from_part_iso_to_endo}. In view of (i) the latter can be replaced by $\varphi(\tau_s(1_{s\m}a))=\tau_s'(\varphi(1_{s\m}a))$ for all $a\in A$, i.\,e. $\varphi\circ\tau_s=\tau_s'\circ\varphi$ on $1_{s\m}A=\dom{\tau_s}$.
	\end{proof}

	\begin{defn}\label{defn-morph_of_Mod(S)}
		Let $(A,\lambda,\alpha)$ and $(A',\lambda',\alpha')$ be modules over an inverse semigroup $S$. By {\it a morphism}  $(A,\lambda,\alpha) \to (A',\lambda',\alpha')$   we mean a homomorphism $\varphi:A\to A'$ satisfying (i) and (ii) of Proposition~\ref{prop-from_morph_of_A(S(G))_to_morph_Mod(S(G))}.
	\end{defn}

	Given an inverse semigroup (monoid) $S$, denote by $\Mod S$ the category of $S$-modules (unital $S$-modules) and their morphisms. Then Theorem~\ref{thrm-from_part_iso_to_endo} and Proposition~\ref{prop-from_morph_of_A(S(G))_to_morph_Mod(S(G))}   yield the next. 

	\begin{prop}\label{prop-A(S)_is_isom_to_Mod(S)}
		For any inverse semigroup $S$ the categories $\A S$ and $\Mod S$ are isomorphic.
	\end{prop}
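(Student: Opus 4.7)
The plan is to assemble the isomorphism directly from the two preceding results, treating it essentially as a bookkeeping exercise once the heavy lifting of Theorem~\ref{thrm-from_part_iso_to_endo} and Proposition~\ref{prop-from_morph_of_A(S(G))_to_morph_Mod(S(G))} is in hand. First I would define a functor $F:\A S\to\Mod S$ by sending an object $\tau:S\to\cIui A$ to the triple $(A,\lambda^\tau,\alpha^\tau)$ produced in Theorem~\ref{thrm-from_part_iso_to_endo}, and sending a morphism $\varphi:\tau\to\tau'$ to the same underlying homomorphism $\varphi:A\to A'$. The fact that $F(\varphi)$ is a morphism in $\Mod S$ is exactly the content of Proposition~\ref{prop-from_morph_of_A(S(G))_to_morph_Mod(S(G))}, and functoriality ($F(\id)=\id$ and $F(\psi\circ\varphi)=F(\psi)\circ F(\varphi)$) is immediate because the passage is the identity on underlying maps.

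Next I would define $F^{-1}:\Mod S\to\A S$ by $(A,\lambda,\alpha)\mapsto\tau^{(\lambda,\alpha)}$, again the identity on underlying homomorphisms. That $F^{-1}$ sends an $\Mod S$-morphism to an $\A S$-morphism is the converse direction of Proposition~\ref{prop-from_morph_of_A(S(G))_to_morph_Mod(S(G))}. The mutual inverseness $F\circ F^{-1}=\id_{\Mod S}$ and $F^{-1}\circ F=\id_{\A S}$ on objects is precisely the last paragraphs of the proof of Theorem~\ref{thrm-from_part_iso_to_endo}, where it is checked that $\tau^{(\lambda^\tau,\alpha^\tau)}=\tau$ and $(\lambda^{\tau^{(\lambda,\alpha)}},\alpha^{\tau^{(\lambda,\alpha)}})=(\lambda,\alpha)$; on morphisms both composites are the identity by construction.

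Finally, to handle the case where $S$ is a monoid and we are working with unital actions and unital $S$-modules, I would invoke Remark~\ref{rem-unital-tau-in-terms-of-(lambda,alpha)}, which states that under the correspondence of Theorem~\ref{thrm-from_part_iso_to_endo} an action $\tau$ is unital if and only if the associated $\lambda$ and $\alpha$ are homomorphisms of monoids. This ensures that $F$ and $F^{-1}$ restrict to the unital subcategories, giving the claimed isomorphism in both the semigroup and monoid settings.

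There is no genuine obstacle here; the only point requiring any care is ensuring the bijection of hom-sets uses the ``if and only if'' of Proposition~\ref{prop-from_morph_of_A(S(G))_to_morph_Mod(S(G))} in both directions, so that a homomorphism $\varphi:A\to A'$ is an $\A S$-morphism precisely when it is a $\Mod S$-morphism; this makes the functors $F$ and $F^{-1}$ fully faithful and literally identity on morphisms, which is what pins down the isomorphism (as opposed to a mere equivalence) of categories.
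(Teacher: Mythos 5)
Your proposal is correct and follows exactly the route the paper intends: the paper's proof consists of the single remark that Theorem~\ref{thrm-from_part_iso_to_endo} and Proposition~\ref{prop-from_morph_of_A(S(G))_to_morph_Mod(S(G))} yield the result, and you have simply spelled out the bookkeeping (identity on underlying homomorphisms, mutual inverseness from the last paragraphs of the theorem's proof, and the unital case via Remark~\ref{rem-unital-tau-in-terms-of-(lambda,alpha)}). No gaps.
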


	\begin{cor}\label{cor-pMod(G)_is_isom_to_Mod(S(G))}
		The category $\pMod G$ is isomorphic to $\Mod{\cS G}$.
	\end{cor}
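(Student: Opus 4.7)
The plan is simply to obtain the desired isomorphism by composing the two category isomorphisms that have already been set up. By Proposition~\ref{prop-pMod(G)_is_isom_to_A(S(G))} there is an isomorphism of categories $\pMod G \to \A{\cS G}$, built from the object-level bijection of Proposition~\ref{prop-Exel_th_for_act_on_semigr} (sending $(A,\theta)$ to the unital action $\tau$ of $\cS G$ defined on $s=\e_{x_1}\dots\e_{x_n}[y]$ by $\tau_s(a)=\theta_y(a)$ on the appropriate domain) and the identity on morphisms, legitimised by Remark~\ref{rem-from_morph_p_mod_G_to_morph_act_S(G)}. On the other hand, Proposition~\ref{prop-A(S)_is_isom_to_Mod(S)}, applied to the inverse monoid $S=\cS G$, provides an isomorphism $\A{\cS G} \to \Mod{\cS G}$, whose object map is $\tau \mapsto (A,\lambda^\tau,\alpha^\tau)$ as in Theorem~\ref{thrm-from_part_iso_to_endo} (keeping in mind Remark~\ref{rem-unital-tau-in-terms-of-(lambda,alpha)} for the unital setting) and whose morphism map is again the identity on underlying homomorphisms, as characterised by Proposition~\ref{prop-from_morph_of_A(S(G))_to_morph_Mod(S(G))}.

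I would then state that the composition of these two isomorphisms yields a functor $\pMod G \to \Mod{\cS G}$ which is bijective on both objects and morphisms, hence an isomorphism of categories. No new calculations are required, since each of the two constituent isomorphisms has been established in the text; the only thing to observe explicitly is that the relevant condition ``unital action'' on the $\cS G$-side matches with ``unital $\cS G$-module'' on the $\Mod{\cS G}$-side, which is precisely the content of Remark~\ref{rem-unital-tau-in-terms-of-(lambda,alpha)}.

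There is no substantive obstacle: the statement is a formal consequence of the preceding two propositions. The only mild subtlety worth mentioning in the write-up is the monoid vs.\ semigroup distinction, i.e.\ ensuring that, since $\cS G$ is a monoid with identity $[1_G]$, the actions and modules we compose on are the unital ones; but this is already built into the conventions fixed before Definition~\ref{defn-act_of_inv_semigr_on_semigr} and reiterated after Definition~\ref{defn-S-mod}.
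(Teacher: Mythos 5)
Your proposal is correct and matches the paper's own treatment: the corollary is obtained there exactly as the composition of Proposition~\ref{prop-pMod(G)_is_isom_to_A(S(G))} with Proposition~\ref{prop-A(S)_is_isom_to_Mod(S)} applied to $S=\cS G$, with the unital/monoid conventions handled as you describe. No further comment is needed.
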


This can be specified as follows. Given $(A,\0)\in\pMod G$, the corresponding $\lambda^\0$ is defined on a generator $[x]\in\cS G$ by $\lambda^\0_{[x]}(a)=\0_x(1_{x\m}a)$ and $\alpha^\0(\e_x)=1_x$ for all $x\in G$. This implies that
	\begin{equation}\label{eq-lambda_and_alpha_for_S(G)}
		\lambda^\0_{\e_{x_1}\dots\e_{x_n}[y]}(a)=1_{x_1}\dots 1_{x_n}\0_y(1_{y\m}a),\ \ \alpha^\0(\e_{x_1}\dots\e_{x_n})=1_{x_1}\dots 1_{x_n}.
	\end{equation}
For any morphism $\varphi:(A,\0)\to(A',\0')$ we see that 
	\[
		\varphi(\alpha(\e_{x_1}\dots\e_{x_n}))=\varphi(1_{x_1}\dots 1_{x_n})=1_{x_1}'\dots 1_{x_n}'=\alpha'(\e_{x_1}\dots\e_{x_n})
	\]
and
	\begin{align*}
		\varphi(\lambda_{\e_{x_1}\dots\e_{x_n}[y]}(a))&=\varphi(1_{x_1}\dots 1_{x_n}\0_y(1_{y\m}a))=1_{x_1}'\dots 1_{x_n}'\0'_y(\varphi(1_{y\m}a))\\
		&=1_{x_1}'\dots 1_{x_n}'\0'_y(1_{y\m}'\varphi(a))=\lambda_{\e_{x_1}\dots\e_{x_n}[y]}'(\varphi(a))
	\end{align*}
(here $(\lambda,\alpha)=(\lambda^\0,\alpha^\0)$ and $(\lambda',\alpha')=(\lambda^{\0'},\alpha^{\0'})$ as above).

In~\cite{Lausch} H. Lausch considered  (strict) $S$-module structures on semilattices of abelian groups $A$. The following remark shows that in the case of partial $G$-modules it is also reasonable to restrict our consideration to an inverse $A$.

	\begin{rem}\label{rem-semigr_of_U(prod_D_x)}
		Let $(A,\0)\in\pMod G$. Define $\tilde A$ to be the (inverse) subsemigroup of $A$ formed by the invertible elements of all ideals $1_{x_1}\dots 1_{x_n}A$, where $x_1,\dots,x_n\in G$, $n\in\bbN$. Then $\0_x(1_{x\m}\tilde A)=1_x\tilde A$, so $\theta$ restricted to $\tilde A$ defines a partial action $\tilde\0$ of $G$ on $\tilde A$. Moreover, $(\tilde A,\tilde\0)\in\pMod G$ and $H^n(G,A)=H^n(G,\tilde A)$.
	\end{rem}

	\noindent For if $a\in\cU{1_{x_1}\dots 1_{x_n}A}$ and $b\in\cU{1_{y_1}\dots 1_{y_m}A}$, then $ab\in\cU{1_{x_1}\dots 1_{x_n}1_{y_1}\dots 1_{y_m}A}$. The inverse of $a$ in $\tilde A$ is its inverse in the corresponding ideal. Furthermore, if $a\in\cU{1_{x_1}\dots 1_{x_n}A}$, then $\0_x(1_{x\m}a)\in\cU{1_x1_{xx_1}\dots 1_{xx_n} A }\subseteq 1_x\tilde A$ and since similarly $\0_{x\m}(1_x\tilde A)\subseteq 1_{x\m}\tilde A$, we have the equality $\0_x(1_{x\m}\tilde A)=1_x\tilde A$. This implies that
	\[
		\0_x(1_{x\m}1_y\tilde A)=\0_x(1_{x\m}1_y)\0_x(1_{x\m}\tilde A)=1_x1_{xy}\tilde A,
	\]
so the restrictions $\tilde\0_x$ of $\0_x$ to $1_{x\m}\tilde A\subseteq 1_{x\m}A$ are in fact a partial action of $G$ on $\tilde A$. By construction $(\tilde A,\tilde\0)\in\pMod G$. Note that $\cU{1_{x_1}\dots 1_{x_n}A}=\cU{1_{x_1}\dots 1_{x_n}\tilde A}$ and hence $C^n(G,A)=C^n(G,\tilde A)$. Since $\tilde\0$ is the restriction of $\0$, the coboundary homomorphisms of these two cochain complexes also coincide and thus, their cohomology groups are equal.

Notice that  
	\[
		\tilde{A}= \bigcup\cU{1_{x_1}\dots 1_{x_n}A},
	\] 
where $x_1, x_2, \dots, x_n$ run over $G.$ Observe that if $e,f\in A$ are  idempotents with $e \neq f$ then $\cU{eA} \cap \cU{fA}  = \emptyset .$ Hence  the above union can be made to be disjoint by removing  repetitions, and then it will give a representation of $\tilde A$ as a semilattice of abelian groups. In particular, any idempotent of $\tilde A$, being an element of some group component, is identity of this component, i.\,e. has a form $1_{x_1}\dots 1_{x_n}$ for $x_1,\dots,x_n\in G$.

	\begin{defn}\label{defn-inverse_part_mod}
		A partial $G$-module $(A,\0)$ is called {\it inverse} if $A$ is inverse and $E(A)$ is generated by the idempotents $1_x$ ($x\in G$).
	\end{defn}

	\begin{rem}\label{rem-tilde_A_is_inverse_part_mod}
		If $(A,\0)\in\pMod G$, then $(\tilde A,\tilde\0)$ is inverse.
	\end{rem}

	Another immediate observation is that $E(A)=\langle 1_x\mid x\in G\rangle$ if and only if $\alpha^\0:E(\cS G)\to E(A)$ is an epimorphism.

	\begin{defn}\label{defn-inv_S-mod}
		An $S$-module $(A,\lambda,\alpha)$ is called {\it inverse} if $A$ is inverse and $\alpha$ is an epimorphism.
	\end{defn}

	\begin{rem}\label{rem-inv_part_G-mod_is_inv_S(G)-mod}
		A partial $G$-module $(A,\0)$ is inverse if and only if the corresponding $\cS G$-module $(A,\lambda^\0,\alpha^\0)$ is inverse.
	\end{rem}

Thus, passing from an inverse partial $G$-module to an $\cS G$-module, we see that the only reason why we might not obtain an $\cS G$-module in the sense of  Lausch is that the map $\alpha^\0:E(\cS G)\to E(A)$ might not be injective in general. For example, considering a classical (i.\,e. usual) $G$-module $A$ (which is readily seen to be  a strict inverse $G$-module) we have $|E(A)|=1$, while $|E(\cS G)|=2^{|G|-1}$. Ne\-ver\-the\-less, an inverse partial $G$-module $A$ may be seen as a strict inverse module over some epimorphic image of $\cS G$. This epimorphic image can be chosen to be $E(A)*_\0 G$ as explained in what follows (see Corollary~\ref{cor-part_mod_as_epi-strict_S(G)-mod}). In particular, in the above example this construction leads to the initial $G$-module $A$.

Given an inverse semigroup $S$ and an $S$-module $(A,\lambda,\alpha)$ denote by $L(A,S)$ the set of $a\delta_s$, where $s\in S$, $a\in\alpha(ss\m)A$ and $\delta_s$ is a symbol.

	\begin{prop}\label{prop-L_is_a_semigroup}
		The set $L(A,S)$ is a semigroup under the multiplication $a\delta_s\cdot b\delta_t=a\lambda_s(b)\delta_{st}$.
	\end{prop}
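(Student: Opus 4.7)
The plan is to verify two things: that the proposed multiplication is well-defined (i.e., $a\lambda_s(b)\delta_{st}$ really lies in $L(A,S)$), and that it is associative.

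For well-definedness, given $a\delta_s, b\delta_t \in L(A,S)$ with $a \in \alpha(ss^{-1})A$ and $b \in \alpha(tt^{-1})A$, I must check that $a\lambda_s(b) \in \alpha(st(st)^{-1})A = \alpha(stt^{-1}s^{-1})A$. Writing $b = \alpha(tt^{-1})b$ and using that $\lambda_s$ is an endomorphism together with property (ii) of Theorem~\ref{thrm-from_part_iso_to_endo}, I get
\[
\lambda_s(b) = \lambda_s(\alpha(tt^{-1}))\lambda_s(b) = \alpha(stt^{-1}s^{-1})\lambda_s(b) \in \alpha(stt^{-1}s^{-1})A.
\]
Thus $a\lambda_s(b) \in \alpha(ss^{-1})\alpha(stt^{-1}s^{-1})A = \alpha(ss^{-1} \cdot stt^{-1}s^{-1})A$, and since $ss^{-1}\cdot stt^{-1}s^{-1} = stt^{-1}(s^{-1}ss^{-1}) = stt^{-1}s^{-1}$, this equals $\alpha(stt^{-1}s^{-1})A$, as required.

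For associativity, I compute directly. On the one hand,
\[
(a\delta_s \cdot b\delta_t) \cdot c\delta_u = a\lambda_s(b)\delta_{st} \cdot c\delta_u = a\lambda_s(b)\lambda_{st}(c)\delta_{stu}.
\]
On the other hand, using that $\lambda_s \in \End A$ and that $\lambda$ is a homomorphism,
\[
a\delta_s \cdot (b\delta_t \cdot c\delta_u) = a\delta_s \cdot b\lambda_t(c)\delta_{tu} = a\lambda_s(b\lambda_t(c))\delta_{stu} = a\lambda_s(b)\lambda_s(\lambda_t(c))\delta_{stu} = a\lambda_s(b)\lambda_{st}(c)\delta_{stu}.
\]
The two sides coincide.

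The only genuinely non-routine step is the well-definedness, where one must translate the condition $a\lambda_s(b) \in \alpha((st)(st)^{-1})A$ into an identity among idempotents of $S$ and then apply property (ii) of the $S$-module structure. Associativity is then a direct consequence of the homomorphism properties of $\lambda$.
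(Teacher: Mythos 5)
Your proof is correct, but it takes a different route from the paper's. The paper's entire proof is a one-line deferral: it observes that $L(A,S)$ is exactly the $\lambda$-semidirect product of Lawson's book (p.~148), so well-definedness and associativity come for free from that known construction. You instead verify the semigroup axioms from scratch: your well-definedness argument correctly reduces the containment $a\lambda_s(b)\in\alpha(st(st)^{-1})A$ to the idempotent identity $ss^{-1}\cdot stt^{-1}s^{-1}=stt^{-1}s^{-1}$ via property (ii) of Theorem~\ref{thrm-from_part_iso_to_endo} and the fact that $b=\alpha(tt^{-1})b$, and your associativity computation uses only that $\lambda$ is a homomorphism $S\to\End A$ and that each $\lambda_s$ is an endomorphism. (One small remark: in the well-definedness step you also implicitly commute $a$ past $\alpha(stt^{-1}s^{-1})$; this is harmless since $\alpha$ takes values in central idempotents, indeed $A$ is commutative here, but it is worth saying.) What the paper's approach buys is the identification with a standard object, which it exploits later (e.g.\ in describing $E(L(A,S))$ and in forming the quotient $A\rtimes S$); what your approach buys is a self-contained proof that does not require the reader to unwind Lawson's definition. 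Both are complete.
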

	\begin{proof} It is directly  verified that  $L(A,S)$ can be seen as the $\lambda $-semidirect product defined in~\cite[p. 148]{Lawson}, so that the above multiplication is well-defined and associative. 
	\end{proof}

	\begin{rem}\label{rem-elements-in-L} 
		Notice that $\lambda_s(a)\delta_{s}$ and  $\lambda_s(a)\delta_{ss\m }$ are   elements in $L(A,S)$ for all $a\in A$, as $\lambda_s(a) =\lambda_{ss\m s}(a) = \lambda_{s s\m} (\lambda_s(a)) = \alpha (ss\m ) \lambda_s(a).$
	\end{rem}

It is well-known that any inverse semigroup $S$ is partially ordered in a following natural way: $s\le t$ if and only if there is $e\in E(S)$ such that $s=et$ (and in this case one can take $e=ss^{-1}$). This is the same as to say that $s=tf$ for some $f\in E(S)$. Notice that for $e,f\in E(S)$ we have $e\le f\Leftrightarrow ef=e$. 

	\begin{rem}\label{rem-idemp-of-L(A,S)} 
		$E(L(A,S))=\{f\delta_e\mid e\in E(S),\ \ f\in E(A),\ \ f\le \alpha(e)\}$.
	\end{rem}

Inspired by~\cite{EV}, instead of $L(A,S)$ we shall deal with a quotient of    $L(A,S)$ which takes into account the natural partial order on $S.$  This idea has important advantages, one of them being the fact that in the $C^{\ast}$-algebraic context the analogous construction fits the definition of the crossed product by an inverse semigroup given by N.~Sieben in~\cite{Sieben} (see~\cite[Theorem 5.6]{EV}). 

Denote by $\sigma$ the congruence generated by $\le$. Since $\le$ is compatible with multiplication in $S$ (\cite[Lemma 7.2]{Clifford-Preston-2}), $\sigma$ is in fact the minimal equivalence relation containing the partial order $\le$. 

	\begin{lem}\label{lem-sigma_is_min_gr_congr}
		For any inverse semigroup $S$ and for all $s,t\in S$ we have $(s,t)\in\sigma$ if and only if there is $u\in S$ such that $u\le s,t$.
	\end{lem}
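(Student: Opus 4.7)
The plan is to verify both inclusions between $\sigma$ and the relation $\sim$ defined on $S$ by $s\sim t\iff \exists\, u\in S$ with $u\le s$ and $u\le t$. Since the paper has already observed that $\sigma$ is the minimal equivalence relation containing $\le$, it suffices to show that $\sim$ is an equivalence relation containing $\le$, and conversely that $\sim\subseteq\sigma$.

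The inclusion $\sim\subseteq\sigma$ is immediate: if $u\le s$ and $u\le t$, then $(u,s),(u,t)\in\sigma$ (because $\le\subseteq\sigma$), so by transitivity of $\sigma$ we get $(s,t)\in\sigma$. In the other direction, $\sim$ plainly contains $\le$ (take $u=s$ when $s\le t$) and is reflexive and symmetric by construction.

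The only substantive point is transitivity of $\sim$. Suppose $u\le s,t$ and $v\le t,w$; the task is to produce a common lower bound of $s$ and $w$. Writing $u=et$ and $v=ft$ for suitable $e,f\in E(S)$ (one may take $e=uu^{-1}$ and $f=vv^{-1}$), and using that idempotents of an inverse semigroup commute, set $x:=eft$. A direct computation then gives $x=fu=ev$: from $x=fu$ with $f\in E(S)$ we obtain $x\le u\le s$, and from $x=ev$ with $e\in E(S)$ we obtain $x\le v\le w$. Hence $x$ is the required common lower bound, so $s\sim w$, which finishes the proof.

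The only mildly delicate step is transitivity, and even there the argument is just that $E(S)$ is a commutative subsemilattice, which lets the ``diagonal'' element $eft$ sit below both $u$ and $v$ at once. Everything else is a formal consequence of the minimality characterization of $\sigma$ noted in the paper.
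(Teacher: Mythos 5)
Your proof is correct, and it is organized differently from the paper's. The paper proves the ``only if'' direction head-on: given a chain $s=s_1,\dots,s_n=t$ witnessing $(s,t)\in\sigma$, with $e_is_i=e_is_{i+1}$ for suitable $e_i\in E(S)$, it sets $e=e_1\cdots e_{n-1}$ and computes directly that $es=et$, so $u=es=et$ is the desired common lower bound. You instead name the candidate relation $\sim$, check that it is an equivalence relation containing $\le$, and invoke the minimality of $\sigma$ (which the paper has indeed established just before the lemma, since $\le$ is compatible with multiplication); the only real work is transitivity of $\sim$, where your element $x=eft$ with $e=uu^{-1}$, $f=vv^{-1}$ is, in effect, the $n=3$ instance of the paper's chain computation. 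The two arguments rest on the same mechanism --- commuting idempotents produce a common lower bound --- but yours trades the explicit induction along the chain for the abstract minimality characterization, which makes the verification shorter and arguably more transparent; the paper's version has the minor advantage of exhibiting the lower bound $u=es=et$ explicitly in terms of the given $\sigma$-chain, a formula it reuses verbatim in the proof of Lemma~\ref{lem-explicit_form_of_rho}. All the individual steps you use (that $u\le t$ gives $u=(uu^{-1})t$, that idempotents commute, and that $\le$ is transitive) are available at this point in the paper, so there is no gap.
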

	\begin{proof}
		We shall  prove the ``only if'' part (the ``if'' part is obvious). As it was mentioned before the lemma, $\sigma$ is the symmetric transitive closure of $\le$. Thus, $(s,t)\in\sigma$ means that there are $s=s_1,\dots,s_n=t\in S$ such that for all $i=1,\dots,n-1$ we have $s_i\le s_{i+1}$ or $s_{i+1}\le s_i$. Therefore, $s_i=e_is_{i+1}$ or $s_{i+1}=e_is_i$ for some $e_i\in E(S)$ ($i=1,\dots,n-1$). In any case $e_is_i=e_is_{i+1}$ for all $i=1,\dots,n-1$. Set $e=e_1\dots e_{n-1}$. Then
	\begin{align*}
		es=es_1&=(e_2\dots e_{n-1})(e_1s_1)=(e_2\dots e_{n-1})(e_1s_2)=(e_1e_3\dots e_{n-1})(e_2s_2)\\
	&=\dots=(e_1\dots e_{n-2})(e_{n-1}s_{n-1})=(e_1\dots e_{n-2})(e_{n-1}s_n)=es_n=et.
	\end{align*}
So, we can take $u=es=et\le s,t$.
	\end{proof}
\noindent This shows that   $\sigma$ is the minimum group congruence on $S$ (see~\cite[Theorem 2.4.1]{Lawson}). For $S=\cS G$ we have $(\e_{x_1}\dots\e_{x_n}[y],\e_{z_1}\dots\e_{z_m}[w])\in\sigma$ if and only if $y=w$ (see~\cite[Example 1.5]{EV}), so the group $\cS G/\sigma$ is isomorphic to $G$.

	\begin{defn}\label{defn-cross_prod_for_S-mod}
		For arbitrary $(A,\lambda,\alpha)\in\Mod S$ define $\rho$ to be the congruence on $L(A,S)$ ge\-ne\-rated by 
	\begin{equation}\label{eq-rel_on_L_gen_rho}
		\{(a\delta_s,a\delta_t)\in L(A,S)\times L(A,S)\mid s\le t\}.
	\end{equation}
The quotient semigroup $L(A,S)/\rho$ will be denoted by $A\rtimes S$ and called {\it the semidirect product of $A$ and $S$}.
	\end{defn} 

\noindent Notice that $s\leq t$ implies $ ss\m \leq t t\m$ and hence $\alpha (s s\m)A =  \alpha (s s\m t t\m)A \subseteq  \alpha (t t\m)A.$

	\begin{rem}\label{rem-cross_prod_is_EV-prod}
		Under the conditions of Definition~\ref{defn-cross_prod_for_S-mod}, $A\rtimes S$ is $A\rtimes_{\tau^{(\lambda,\alpha)}}^a S$ in the sense of Definition~3.5 from~\cite{EV} modified for actions of inverse semigroups on semigroups.
	\end{rem}

	\begin{rem}\label{rem-rho_is_trans_closure}
		Under the conditions of Definition~\ref{defn-cross_prod_for_S-mod} the relation~\eqref{eq-rel_on_L_gen_rho} is compatible with the multiplication in $L(A,S)$, so the congruence $\rho$ is the equivalence relation generated by~\eqref{eq-rel_on_L_gen_rho}.
	\end{rem}
	\noindent Indeed,  given $a\delta_s,a\delta_t\in L(A,S)$, such that $s\le t$, and an arbitrary $b\delta_u\in L(A,S)$, we see that
	\[
		(b\delta_u\cdot a\delta_s,b\delta_u\cdot a\delta_t)=(b\lambda_u(a)\delta_{us},b\lambda_u(a)\delta_{ut})\in\rho,
	\]
because $us\le ut$. Similarly,
	\[
		(a\delta_s\cdot b\delta_u,a\delta_t\cdot b\delta_u)=(a\lambda_s(b)\delta_{su},a\lambda_t(b)\delta_{tu})\in\rho,
	\]
if we show that $a\lambda_s(b)=a\lambda_t(b)$. Since $s=ss\m t$, then $a\lambda_s(b)=a\lambda_{ss\m t}(b)=a\alpha(ss\m)\lambda_t(b)=a\lambda_t(b)$, because $a=\alpha(ss\m)a$.

As an immediate consequence, we notice that $(a\delta_s,b\delta_t)\in\rho$ implies $a=b$ and $(s,t)\in\sigma$. The converse, however, is not true in general.

	\begin{lem}\label{lem-explicit_form_of_rho}
		For any $(A,\lambda,\alpha)\in\Mod S$ and for arbitrary $a\delta_s,b\delta_t\in L(A,S)$ we have $(a\delta_s,b\delta_t)\in\rho$ if and only if there is $u\in S$ such that $u\le s,t$ and $a=b\in\alpha(uu\m)A$.
	\end{lem}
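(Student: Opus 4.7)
The plan is to prove the ``if'' direction directly from the definition of $\rho$ and the ``only if'' direction by tracing through a zigzag witnessing $(a\delta_s, b\delta_t) \in \rho$ and mimicking the idempotent-trick used in Lemma~\ref{lem-sigma_is_min_gr_congr}.

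For the ``if'' direction, suppose $u \le s, t$ and $a \in \alpha(uu^{-1})A$. Since $u \le s$ implies $uu^{-1} \le ss^{-1}$, we get $\alpha(uu^{-1})A \subseteq \alpha(ss^{-1})A$, so $a\delta_s \in L(A,S)$; analogously for $a\delta_t$ and $a\delta_u$. The pairs $(a\delta_u, a\delta_s)$ and $(a\delta_u, a\delta_t)$ both belong to the generating set~\eqref{eq-rel_on_L_gen_rho}, so by symmetry and transitivity $(a\delta_s, a\delta_t) \in \rho$.

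For the ``only if'' direction, invoke Remark~\ref{rem-rho_is_trans_closure} to write $\rho$ as the symmetric transitive closure of~\eqref{eq-rel_on_L_gen_rho}. Then $(a\delta_s, b\delta_t) \in \rho$ yields a chain $c_1\delta_{s_1} = a\delta_s,\, c_2\delta_{s_2},\, \dots,\, c_n\delta_{s_n} = b\delta_t$ in $L(A,S)$ in which each consecutive pair either equals or is swapped from an element of~\eqref{eq-rel_on_L_gen_rho}. Each such step forces $c_i = c_{i+1}$, so all $c_i$ coincide and in particular $a = b$. Moreover each step gives $s_i \le s_{i+1}$ or $s_{i+1} \le s_i$, so there is $e_i \in E(S)$ (namely $e_i = s_is_i^{-1}$ or $e_i = s_{i+1}s_{i+1}^{-1}$) with $e_is_i = e_is_{i+1}$. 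Exactly as in Lemma~\ref{lem-sigma_is_min_gr_congr}, setting $e = e_1 \cdots e_{n-1}$ and $u = es$ gives $u = es = et \le s, t$, and since idempotents commute in $S$ one has $uu^{-1} = ess^{-1}$.

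It remains to show $a \in \alpha(uu^{-1})A$; this is the main delicate point. Since each $c_i\delta_{s_i} = a\delta_{s_i}$ lies in $L(A,S)$, we have $a \in \alpha(s_is_i^{-1})A$ for every $i$, hence $a \in \alpha(e_i)A$ for every $i$ (as $e_i \in \{s_is_i^{-1}, s_{i+1}s_{i+1}^{-1}\}$). Because the values of $\alpha$ are central idempotents of $A$, intersecting principal ideals amounts to multiplying generators, so $a \in \alpha(e_1)\cdots\alpha(e_{n-1})\alpha(ss^{-1})A = \alpha(e_1 \cdots e_{n-1}\, ss^{-1})A = \alpha(uu^{-1})A$, as required. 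The main obstacle is precisely this last bookkeeping step: ensuring that $a$ lies not just in $\alpha(ss^{-1})A \cap \alpha(tt^{-1})A$ but in the smaller ideal $\alpha(uu^{-1})A$ dictated by the chosen witness $u$, which is made possible by the fact that every intermediate $a\delta_{s_i}$ in the zigzag is forced to live in $\alpha(s_is_i^{-1})A$.
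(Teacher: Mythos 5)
Your proof is correct and follows essentially the same route as the paper's: the ``if'' direction via the two generating pairs $(a\delta_u,a\delta_s)$ and $(a\delta_u,a\delta_t)$, and the ``only if'' direction by tracing a zigzag, observing that every intermediate term carries the same coefficient $a\in\alpha(s_is_i\m)A$, and reusing the idempotent product $e=e_1\cdots e_{n-1}$ and $u=es$ from Lemma~\ref{lem-sigma_is_min_gr_congr}. The only cosmetic difference is that you normalize $e_i$ to be $s_is_i\m$ or $s_{i+1}s_{i+1}\m$, whereas the paper keeps $e_i$ arbitrary and deduces $a\in\alpha(e_i)A$ from $s_is_i\m\le e_i$; both are fine.
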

	\begin{proof}
		The ``if'' part is easy: if $u\le s,t$ and $a=b\in\alpha(uu\m)A$, then $(a\delta_u,a\delta_s)$ and $(a\delta_u,a\delta_t)$ belong to the relation defined by~\eqref{eq-rel_on_L_gen_rho}, so by symmetry and transitivity of $\rho$ we have $(a\delta_s,a\delta_t)\in\rho$.

		Now suppose $(a\delta_s,b\delta_t)\in\rho$, then, as it was mentioned above, $a=b$. Let $a\delta_s=a\delta_{s_1},\dots,a\delta_{s_n}=a\delta_t$ be a sequence of elements of $L(A,S)$ such that $s_i=e_is_{i+1}$ or $e_is_i=s_{i+1}$ for all $i=1,\dots,n-1$ ($e_i\in E(S)$).  Since  $a\in\alpha(s_is_i\m)A$ for all $i=1,\dots, n ,$ then $a\in\alpha(e_i)A$, $i=1,\dots,n-1 .$ Therefore, $a\in\alpha(e)A$, where $e=e_1\dots e_{n-1}$. Now   setting $u=es$ as in the proof of Lemma~\ref{lem-sigma_is_min_gr_congr},  we see that $u\le s,t$ and, moreover, $a\in\alpha(e)\alpha(ss\m)A=\alpha(uu\m)A$.
	\end{proof}

	\begin{exm}\label{exm-Z_2_cup_0}
		Take the cyclic group $\bbZ_2=\langle a\mid a^2=1\rangle$ and set $S=\bbZ_2\cup\{0\}$. Then $S$  is a commutative inverse monoid, whose group components are $\bbZ_2$ and $\{0\}$. We can define a structure of a (strict) $S$-module on $S$ as follows: $\lambda_s(t)=ss^{-1}t$ and $\alpha(e)=e$ for all $s,t\in S$ and $e\in E(S)$. Then $(a,1)\in\sigma$, but $(a\delta_a,a\delta_1)\not\in\rho$.
	\end{exm}
	\noindent Clearly, $0\le a,1$, but $a$ and $1$ are incomparable, so the only $u\in S$ with $u\le a,1$ is $0$. Although $a\in\alpha(aa^{-1})S=\alpha(1)\cdot S=S$, we cannot find $u\le a,1$ such that $a\in\alpha(uu^{-1})S$, since otherwise $a=0$. By Lemma~\ref{lem-explicit_form_of_rho} $(a\delta_a,a\delta_1)\not\in\rho$.

	Recall that an inverse semigroup $S$ is called {\it $E$-unitary} if $es\in E(S)$ implies $s\in E(S)$ for all $s\in S$ and $e\in E(S)$. Equivalently, $\sigma $ coincides with the compatibility relation $\sim $ on $S$ (see~\cite[pp. 24, 66]{Lawson}).  For instance, $\cS G$ is $E$-unitary for any group $G$, while the semigroup $S$ from Example~\ref{exm-Z_2_cup_0} is not $E$-unitary, because $0a=0\in E(S)$ and $a\notin  E(S)$.

	\begin{rem}\label{rem-rho_for_E-unitary_semigr}
		If under the conditions of Lemma~\ref{lem-explicit_form_of_rho} the semigroup $S$ is $E$-unitary then $(a\delta_s,b\delta_t)\in\rho$ exactly when $(s,t)\in\sigma$ and $a=b$.
	\end{rem}
	\noindent We need only explain the ``if'' part. Suppose $(s,t)\in\sigma$  and $a=b\in\alpha(ss\m tt\m)A.$ Since $S$ is $E$-unitary,  $s \sim t$ and hence $t\m s\in E(S)$. Set $v=(tt\m)s=t(t\m s)\le s,t$. Note that $vv\m=ss\m tt\m$ and thus $a=b\in\alpha(vv\m)A$.

	\begin{cor}\label{cor-mult_a_delta_s_by_b}
		If $(a\delta_s,b\delta_t)\in\rho$, then for any $c\in A$ we have $(ca\delta_s,cb\delta_t)\in\rho$.
	\end{cor}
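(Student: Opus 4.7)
The plan is to apply Lemma~\ref{lem-explicit_form_of_rho} twice: once to unpack the hypothesis and once to repackage the conclusion. From $(a\delta_s,b\delta_t)\in\rho$ the lemma yields an element $u\in S$ with $u\le s,t$ and $a=b\in\alpha(uu^{-1})A$. In particular $ca=cb$, so it suffices to produce the same sort of witness $u$ for the pair $(ca\delta_s,ca\delta_t)$.

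The one thing to verify is that $ca\in\alpha(uu^{-1})A$. Since $a\in\alpha(uu^{-1})A$, the idempotent $\alpha(uu^{-1})$ absorbs $a$, i.e.\ $a=\alpha(uu^{-1})a$. Because $A$ is commutative (see Definition~\ref{defn-S-mod}) this gives
\[
ca=c\alpha(uu^{-1})a=\alpha(uu^{-1})(ca),
\]
so $ca\in\alpha(uu^{-1})A$, as needed. Exactly the same one-line calculation (using $u\le s$, hence $\alpha(uu^{-1})A\subseteq\alpha(ss^{-1})A$, and similarly for $t$) shows that $ca\delta_s$ and $cb\delta_t$ are legitimate elements of $L(A,S)$ in the first place. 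Feeding the triple $(ca,u,ca)$ back into the ``if'' direction of Lemma~\ref{lem-explicit_form_of_rho} then yields $(ca\delta_s,cb\delta_t)\in\rho$.

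There is really no obstacle here once Lemma~\ref{lem-explicit_form_of_rho} is available: the statement is a bookkeeping corollary of the explicit description of $\rho$, with the only substantive ingredient being centrality of $\alpha(uu^{-1})$ in $A$. An alternative route would be to argue directly from the generating relation~\eqref{eq-rel_on_L_gen_rho} plus the fact that $\rho$ is a congruence, but going through the lemma makes the witness $u$ explicit and avoids re-deriving a transitive closure argument.
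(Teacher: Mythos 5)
Your argument is correct and is essentially the paper's own: the paper likewise justifies this corollary by invoking Lemma~\ref{lem-explicit_form_of_rho} and observing that $a=b\in\alpha(uu^{-1})A$ implies $ca=cb\in\alpha(uu^{-1})A$. Your additional remarks (centrality of the idempotent, membership of $ca\delta_s$ in $L(A,S)$) just make explicit what the paper leaves implicit.
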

	\noindent Indeed, if $a=b\in\alpha(uu\m)A$, then $ca=cb\in\alpha(uu\m)A$.

	\begin{rem}\label{rem-rho_idemp_sep}
		Let $(A,\lambda,\alpha)\in\Mod S$ and $a\delta_e,b\delta_f\in L(A,S)$ with $e,f\in E(S)$. Then
$(a\delta_e,b\delta_f)\in\rho$ exactly when $a=b$.
	\end{rem}
	\noindent For $ef\le e,f$, and if $a=b$ then $a=b\in\alpha(e)\alpha(f)A=\alpha(ef)A$.

Taking into account Remark~\ref{rem-idemp-of-L(A,S)}, we see that $\rho$ is idempotent-separating, which means that each $\rho$-class contains at most one idempotent of $L(A,S)$. Equivalently, $\rho^\natural$ restricted to $E(L(A,S))$ is injective, where $\rho^\natural$ denotes the natural epimorphism $L(A,S)\to A\rtimes S$.

For reader's convenience we also notice the following straightforward fact: if $\varphi : S_1 \to S_2$ is an epimorphism of inverse semigroups, then $E(S_2)=\varphi (E(S_1)).$ Indeed, taking $e\in E(S_2)$ and $s\in S_1$ with $\varphi (s)=e$ we have $\varphi (s s\m)=e,$ $s s\m \in E(S_1).$    

	\begin{prop}\label{prop-from_part_act_to_Lausch}
		Let $(A,\lambda,\alpha)$ be an $S$-module, such that $\alpha$ is an epimorphism. Then there exist an inverse semigroup $S'$, an epimorphism $\pi:S\to S'$ and an $S'$-module $(A,\tilde\lambda,\tilde\alpha)$, such that
	\begin{enumerate}[(i)]
		\item $(A,\tilde\lambda,\tilde\alpha)$ is strict;
		\item $\tilde\lambda\circ\pi=\lambda$ on $S$ and $\tilde\alpha\circ\pi=\alpha$ on $E(S)$.
	\end{enumerate}
Conversely, given an epimorphism $\pi:S\to S'$ and a strict $S'$-module $(A,\tilde\lambda,\tilde\alpha)$, the maps $\lambda$ and $\alpha$, determined by  (ii), endow  $A$ with  an $S$-module structure, and moreover $\alpha$ is surjective.
	\end{prop}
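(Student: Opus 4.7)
The plan is to construct the required epimorphism $\pi:S\to S'$ as the quotient of $S$ by the congruence $\sim$ given by $s\sim t\iff\lambda_s=\lambda_t$, i.e.\ the kernel of $\lambda$. Since $\lambda$ is a semigroup homomorphism $\sim$ is automatically a congruence, and since $S$ is inverse the quotient $S':=S/{\sim}$ is again an inverse semigroup; the canonical projection $\pi$ then maps $E(S)$ onto $E(S')$, as holds for every surjective homomorphism of inverse semigroups.

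Having $\pi$ in hand, I would define $\tilde\lambda([s]):=\lambda_s$ and $\tilde\alpha([e]):=\alpha(e)$ for $e\in E(S)$. The first is well-defined and even injective by construction, and the second is well-defined and a bijection onto $E(A)$ (i.e.\ strictness) provided one can prove the key equivalence
\[
\pi(e)=\pi(f)\iff\lambda_e=\lambda_f\iff\alpha(e)=\alpha(f)\qquad(e,f\in E(S)).
\]
The delicate implication is $\lambda_e=\lambda_f\Rightarrow\alpha(e)=\alpha(f)$: evaluating both sides at $a=\alpha(e)$ and at $a=\alpha(f)$ and using property~(i) of Theorem~\ref{thrm-from_part_iso_to_endo} together with the fact that $\alpha(e),\alpha(f)\in E(C(A))$ are commuting idempotents yields $\alpha(e)=\alpha(f)\alpha(e)=\alpha(e)\alpha(f)=\alpha(f)$. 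Surjectivity of $\tilde\alpha$ is inherited from that of $\alpha$, so $\tilde\alpha$ is an isomorphism, as required for strictness.

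The verification of axioms (i) and (ii) of Theorem~\ref{thrm-from_part_iso_to_endo} for the pair $(\tilde\lambda,\tilde\alpha)$ is a direct transfer through $\pi$: writing $s'=\pi(s)$ and $e'=\pi(e)$, one has $\tilde\lambda_{e'}(a)=\lambda_e(a)=\alpha(e)a=\tilde\alpha(e')a$ and $\tilde\lambda_{s'}(\tilde\alpha(e'))=\lambda_s(\alpha(e))=\alpha(ses^{-1})=\tilde\alpha(s'e'(s')^{-1})$, using that $\pi$ respects inversion. The factorizations $\tilde\lambda\circ\pi=\lambda$ and $\tilde\alpha\circ\pi=\alpha$ demanded in~(ii) of the proposition hold by construction.

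For the converse, given an epimorphism $\pi:S\to S'$ and a strict $S'$-module $(A,\tilde\lambda,\tilde\alpha)$, I simply set $\lambda:=\tilde\lambda\circ\pi$ and $\alpha:=\tilde\alpha\circ\pi|_{E(S)}$; these are homomorphisms as compositions, and $\alpha$ is surjective because $\tilde\alpha$ is bijective and $\pi(E(S))=E(S')$. Both axioms of Theorem~\ref{thrm-from_part_iso_to_endo} for $(\lambda,\alpha)$ follow at once from those for $(\tilde\lambda,\tilde\alpha)$, using $\pi(ses^{-1})=\pi(s)\pi(e)\pi(s)^{-1}$. The single conceptual hurdle I anticipate is the implication $\lambda_e=\lambda_f\Rightarrow\alpha(e)=\alpha(f)$ underpinning the well-definedness and injectivity of $\tilde\alpha$; everything else is routine bookkeeping once that is in place.
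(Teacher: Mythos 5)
Your proof is correct, but it is a genuinely different construction from the one in the paper, and the difference is worth being aware of. You take $S'=S/\ker\lambda$, the coarsest quotient through which $\lambda$ factors, and your key step --- that $\lambda_e=\lambda_f$ forces $\alpha(e)=\alpha(f)$, proved by evaluating both endomorphisms at $\alpha(e)$ and at $\alpha(f)$ --- is exactly what makes $\tilde\alpha$ well defined and injective; together with the surjectivity of $\alpha$ this gives strictness, and the remaining verifications are, as you say, routine. The paper instead maps $S$ into $L(A,S)$ via $s\mapsto\alpha(ss\m)\delta_s$ and takes $S'$ to be the image of $S$ in the semidirect product $A\rtimes S=L(A,S)/\rho$; it then deduces $\pi(s)=\pi(t)\Rightarrow\lambda_s=\lambda_t$ from Lemma~\ref{lem-explicit_form_of_rho}, so its congruence $\ker\pi$ is contained in yours, and the two quotients differ in general: if $S$ is a nontrivial group acting on the trivial module $A=\{1_A\}$, your $S'$ is trivial while the paper's $S'$ is $S$ itself. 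What the paper's heavier construction buys is the containment $\ker\pi\subseteq\sigma$ (the minimum group congruence), which your quotient fails in general; that containment is precisely what Definition~\ref{defn-stand_epi-str_mod} of a \emph{standard} epi-strict module and Proposition~\ref{prop-desc_of_ESMod^*(S)} rest on, and it drives the whole connection with max-generated $F$-inverse monoids in Section~\ref{sec-max-generated}. So your argument proves the proposition as literally stated, and more cheaply, but it could not be substituted for the paper's proof without breaking the later notion of ``standard''.
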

	\begin{proof}
		Note that the map $\varphi:S\to L(A,S)$ defined by
		\[
			\varphi(s)=\alpha(ss\m)\delta_s
		\]
is a homomorphism, because
	\[
		\varphi(s)\varphi(t)=\alpha(ss\m)\lambda_s(\alpha(tt\m))\delta_{st}=\alpha(stt\m s\m)\delta_{st}=\varphi(st).
	\]
Hence, $\im\varphi$ is an inverse subsemigroup of $L(A,S)$ with $(\alpha(ss\m)\delta_s)\m=\alpha(s\m s)\delta_{s\m}$. 

	Set $\pi=\rho^\natural\circ\varphi:S\to A\rtimes S$. It is an epimorphism onto $S'=\im\pi$. Clearly, $S'$ is also inverse and 
	\begin{equation}\label{eq-inv_of_pi(s)}
		\pi(s)\m=\pi(s\m)=\rho^\natural(\alpha(s\m s)\delta_{s\m}).
	\end{equation}
Note that for any $a\in A$ and $s\in S$ we have
	\[
		\varphi(s)\cdot a\alpha(s\m s)\delta_{s\m}=\alpha(ss\m)\delta_s\cdot a\alpha(s\m s)\delta_{s\m}=\alpha(ss\m)\lambda_s(a\alpha(s\m s))\delta_{ss\m}
	=\lambda_s(a)\delta_{ss\m},
	\] 
using Remark~\ref{rem-elements-in-L}. Therefore, applying $\rho^\natural$ we get
	\begin{equation}\label{eq-act_of_pi(s)}
		\pi(s)\rho^\natural(a\alpha(s\m s)\delta_{s\m})=\rho^\natural(\lambda_s(a)\delta_{ss\m})
	\end{equation}
for arbitrary $s\in S$ and $a\in A$. If $\pi(s)=\pi(t)$, then by~\eqref{eq-inv_of_pi(s)} and Corollary~\ref{cor-mult_a_delta_s_by_b} we have $\rho^\natural(a\alpha(s\m s)\delta_{s\m})=\rho^\natural(a\alpha(t\m t)\delta_{t\m})$ for arbitrary $a\in A$. So, the left-hand sides of~\eqref{eq-act_of_pi(s)} corresponding to $s$ and $t$ are equal, hence the right-hand sides are also equal. Then by the ``only if'' part of Lemma~\ref{lem-explicit_form_of_rho}, $\lambda_s=\lambda_t$. Thus, there exists a map $\tilde\lambda:S'\to\End A$ with $\tilde\lambda\circ\pi=\lambda$. It is readily seen that $\tilde\lambda$ is a homomorphism. In view of the epimorphism $\pi : S \to S'$  the idempotents of $S'$ are precisely the classes $\pi(e)=\rho^\natural(\alpha(e)\delta_e)$, where $e\in E(S)$, and $\pi(e)=\pi(f)$ if and only if $\alpha(e)=\alpha(f)$, because $\rho$ is idempotent-separating. This defines a monomorphism $\tilde\alpha:E(S')\to E(A)$ satisfying $\tilde\alpha\circ\pi=\alpha$ on $E(S)$. It is in fact an isomorphism 
due to the surjectivity of $\alpha$. We only need to check the properties (i) and (ii) of  Theorem~\ref{thrm-from_part_iso_to_endo}.

In view of the surjectivity of $\pi$, it  suffices  to prove
	\begin{enumerate}[(i')]
		\item $\tilde\lambda_{\pi(e)}(a)=\tilde\alpha(\pi(e))a$ for all $e\in E(S)$ and $a\in A$,
		\item $\tilde\lambda_{\pi(s)}(\tilde\alpha(\pi(e)))
=\tilde\alpha(\pi(s)\pi(e)\pi(s^{-1}))$ for all $s\in S$ and $e\in E(S)$.
	\end{enumerate}
But these are precisely the properties (i) and (ii) of  Theorem~\ref{thrm-from_part_iso_to_endo} for $(\lambda,\alpha)$ by the definition of $(\tilde\lambda,\tilde\alpha)$.

The converse is straightforward: if $\pi:S\to S'$ is an epimorphism, $\tilde\lambda:S'\to\End A$ is a homomorphism and $\tilde\alpha:E(S')\to E(A)$ is an isomorphism such that $(A, \tilde\lambda, \tilde\alpha)$ is a strict $S'$-module, then $\lambda=\tilde\lambda\circ\pi:S\to\End A$ and $\alpha=\tilde\alpha\circ\pi:E(S)\to E(A)$ are  homomorphisms. Moreover, $\alpha$ is surjective, because $\pi(E(S))=E(S')$. As shown above, (i) and (ii) of  Theorem~\ref{thrm-from_part_iso_to_endo} for $(\lambda,\alpha)$   are equivalent to the same properties  for  $(\tilde\lambda,\tilde\alpha)$. So, $(A,\lambda,\alpha)$ is an $S$-module with surjective $\alpha$. 
	\end{proof}

	\begin{defn}\label{defn-A*G}
		Given $(A,\0)\in\pMod G$, define {\it the crossed product of $A$ and $G$} to be the set $A*_\0 G$ of $a\delta_x$, where $x\in G$, $a\in 1_xA$ and $\delta_x$ is a symbol. It is a semigroup under the multiplication $a\delta_x\cdot b\delta_y=a\0_x(1_{x\m}b)\delta_{xy}$.
	\end{defn}
	\noindent The associativity of multiplication follows from the fact that all the domains of $\0$ are unital ideals of $A$ (see~\cite[Corollary~3.2]{DE}).

	\begin{rem}\label{rem-A*G_is_isom_to_A*S(G)}
		Let $(A,\0)\in\pMod G$ and $(A,\lambda,\alpha)\in\Mod{\cS G}$ the corresponding $\cS G$-module. Then $A\rtimes\cS G$ is isomorphic to $A*_\0 G$.
	\end{rem}
	\noindent On one hand, this follows from Remark~\ref{rem-cross_prod_is_EV-prod} and~\cite[Theorem~3.7]{EV} and, on the other hand, taking into account Remark~\ref{rem-rho_for_E-unitary_semigr}, an easy direct verification shows that the map  given by $A\rtimes\cS G\ni\rho^\natural(a\delta_{[y]})\leftrightarrow a\delta_y\in A*_\0 G$ is an isomorphism.

	\begin{cor}\label{cor-part_mod_as_epi-strict_S(G)-mod}
		Let $(A,\0)$ be an inverse partial $G$-module. Then there are an epimorphism $\pi:\cS G\to S=E(A)*_\0 G$ defined by $\pi(\e_{x_1}\dots\e_{x_n}[y])=1_{x_1}\dots 1_{x_n}1_y\delta_y$ and a strict inverse $S$-module $(A,\lambda,\alpha)$, such that
	\begin{align}\label{eq-lambda_and_alpha-for-S}
	    \alpha(\pi(\e_x))=1_x,\ \ \lambda_{\pi([x])}(a)=\0_x(1_{x\m}a)
	\end{align}
	for $x\in G$ and $a\in A$. Conversely, any pair consisting of an epimorphism $\pi:\cS G\to S$ and a strict inverse $S$-module $(A,\lambda,\alpha)$ induces by~\eqref{eq-lambda_and_alpha-for-S} a structure of an inverse partial $G$-module $(A,\0)$ on $A$.
	\end{cor}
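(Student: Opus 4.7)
The plan is to apply Proposition~\ref{prop-from_part_act_to_Lausch} with $S=\cS G$ and then identify the resulting epimorphic image with $E(A)*_\0 G$ by means of Remark~\ref{rem-A*G_is_isom_to_A*S(G)}; the converse direction will fall out from the second part of the same proposition combined with Corollary~\ref{cor-pMod(G)_is_isom_to_Mod(S(G))}.

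First, I would translate the inverse partial $G$-module $(A,\0)$ into an $\cS G$-module $(A,\lambda^\0,\alpha^\0)$ via Corollary~\ref{cor-pMod(G)_is_isom_to_Mod(S(G))}. By Remark~\ref{rem-inv_part_G-mod_is_inv_S(G)-mod} the inverseness hypothesis on $(A,\0)$ is precisely that $A$ is inverse and $\alpha^\0$ is surjective, so Proposition~\ref{prop-from_part_act_to_Lausch} applies and yields an epimorphism $\pi:\cS G\to S'$, $s\mapsto\rho^\natural(\alpha^\0(ss\m)\delta_s)$, together with a strict $S'$-module structure $(A,\tilde\lambda,\tilde\alpha)$ satisfying $\tilde\lambda\circ\pi=\lambda^\0$ and $\tilde\alpha\circ\pi=\alpha^\0$. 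Since $A$ is inverse, $(A,\tilde\lambda,\tilde\alpha)$ is in fact a strict inverse $S'$-module.

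Next, I would compute $\pi$ on an arbitrary element $s=\e_{x_1}\dots\e_{x_n}[y]$ of $\cS G$: the identity $ss\m=\e_{x_1}\dots\e_{x_n}\e_y$ together with~\eqref{eq-lambda_and_alpha_for_S(G)} gives $\alpha^\0(ss\m)=1_{x_1}\dots 1_{x_n}1_y$, and the isomorphism $A\rtimes\cS G\cong A*_\0 G$ of Remark~\ref{rem-A*G_is_isom_to_A*S(G)} (which is well defined on all elements thanks to Remark~\ref{rem-rho_for_E-unitary_semigr} applied to the $E$-unitary monoid $\cS G$) identifies $\rho^\natural(1_{x_1}\dots 1_{x_n}1_y\delta_s)$ with $1_{x_1}\dots 1_{x_n}1_y\delta_y$, yielding the announced formula for $\pi$. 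The equalities $\alpha(\pi(\e_x))=1_x$ and $\lambda_{\pi([x])}(a)=\0_x(1_{x\m}a)$ then follow at once from $\tilde\alpha\circ\pi=\alpha^\0$, $\tilde\lambda\circ\pi=\lambda^\0$ and~\eqref{eq-lambda_and_alpha_for_S(G)}. The step I expect to require the most care is the identification $S'=E(A)*_\0 G$: from the explicit description above, $\im\pi$ consists of the elements $e\delta_y\in A*_\0 G$ whose first coordinate is a product of idempotents of the form $1_x$, $x\in G$, and the equality $E(A)=\langle 1_x\mid x\in G\rangle$ of Definition~\ref{defn-inverse_part_mod} finishes the identification. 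It is at this precise point that both the $E$-unitariness of $\cS G$ and the inverseness hypothesis on $(A,\0)$ are used in an essential way.

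For the converse, given an epimorphism $\pi:\cS G\to S$ and a strict inverse $S$-module $(A,\lambda,\alpha)$, the second statement of Proposition~\ref{prop-from_part_act_to_Lausch} equips $A$ with an $\cS G$-module structure $(\lambda\circ\pi,\alpha\circ\pi)$ whose idempotent map is surjective (being a composition of the surjection $\pi|_{E(\cS G)}$ with the isomorphism $\alpha$); Corollary~\ref{cor-pMod(G)_is_isom_to_Mod(S(G))} together with Remark~\ref{rem-inv_part_G-mod_is_inv_S(G)-mod} then translates this into an inverse partial $G$-module $(A,\0)$, and the formulas~\eqref{eq-lambda_and_alpha-for-S} hold by construction.
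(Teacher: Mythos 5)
Your proposal is correct and follows essentially the same route as the paper: both pass to the $\cS G$-module $(A,\lambda^\0,\alpha^\0)$, apply Proposition~\ref{prop-from_part_act_to_Lausch} to obtain $\pi$ and the strict module structure, identify $\im\pi$ with $E(A)*_\0 G$ via Remark~\ref{rem-A*G_is_isom_to_A*S(G)}, read off~\eqref{eq-lambda_and_alpha-for-S} from~\eqref{eq-lambda_and_alpha_for_S(G)}, and handle the converse by the second half of Proposition~\ref{prop-from_part_act_to_Lausch} together with Remark~\ref{rem-inv_part_G-mod_is_inv_S(G)-mod}. Your only addition is to spell out the surjectivity onto $E(A)*_\0 G$ using $E(A)=\langle 1_x\mid x\in G\rangle$, which the paper dismisses as obvious.
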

	\noindent Indeed, let $(A,\lambda^\0,\alpha^\0)$ be the (inverse) $\cS G$-module corresponding to $(A,\0)$. Then there is $\pi:\cS G\to A\rtimes\cS G$ as in Proposition~\ref{prop-from_part_act_to_Lausch}, namely
	\[
		\pi(\e_{x_1}\dots\e_{x_n}[y])=
\rho^\natural(\alpha^\0(\e_{x_1}\dots\e_{x_n}\e_y)\delta_{\e_{x_1}\dots\e_{x_n}[y]})=
\rho^\natural(1_{x_1}\dots 1_{x_n}1_y\delta_{\e_{x_1}\dots\e_{x_n}[y]}),
	\]
which can be identified with $1_{x_1}\dots 1_{x_n}1_y\delta_y\in E(A)*_\0 G$ by Remark~\ref{rem-A*G_is_isom_to_A*S(G)}. Obviously, each element of $E(A)*_\0 G$ has this form. It remains to note that the strict inverse module $(A,\lambda,\alpha)$ over $\pi(\cS G)$ coming from $(A,\lambda^\0,\alpha^\0)$ satisfies
    \[
        \alpha(\pi(\e_x))=\alpha^\0(\e_x)=1_x,\ \ \lambda_{\pi([x])}(a)=\lambda^\0_{[x]}(a)=\0_x(1_{x\m}a)
    \]
    by (ii) of Proposition~\ref{prop-from_part_act_to_Lausch} and~\eqref{eq-lambda_and_alpha_for_S(G)}. For the converse, having a pair $\pi:\cS G\to S$, $(A,\lambda,\alpha)$ with the above properties, construct the inverse $\cS G$-module $(A,\lambda\circ\pi,\alpha\circ\pi)$ using Proposition~\ref{prop-from_part_act_to_Lausch}. Then $(A,\0)$ from~\eqref{eq-lambda_and_alpha-for-S} is the corresponding partial $G$-module by~\eqref{eq-lambda_and_alpha_for_S(G)}. It is inverse thanks to Remark~\ref{rem-inv_part_G-mod_is_inv_S(G)-mod}.

	\begin{rem}\label{rem-phi-hat} 
		Let $\varphi:A'\to A''$ be a morphism of $S$-modules. Then there is a (unique) homomorphism of semigroups 
$\hat{\varphi}: A'\rtimes S \to A''\rtimes S$ such that $\hat{\varphi}({\rho'}^\natural(a\delta_s)) = 
{\rho''}^\natural(\varphi(a)\delta_s)$ for any $a\delta_s\in L(A',S).$
	\end{rem}
	\noindent In order to see this, notice that by Definition~\ref{defn-morph_of_Mod(S)} the map $\bar{\varphi}:L(A',S)\ni a\delta_s\mapsto\varphi(a)\delta_s\in L(A'',S)$ is a well-defined homomorphism of semigroups, and in view of Lemma~\ref{lem-explicit_form_of_rho} $\bar\varphi(\rho')\subseteq\rho''$, so that one can define a homomorphism $\hat\varphi:A'\rtimes S \to A''\rtimes S$ by sending  ${\rho'}^\natural(a\delta_s)$ to 
${\rho''}^\natural(\bar{\varphi}(a\delta_s))={\rho''}^\natural(\varphi(a)\delta_s)$. 

	\begin{rem}\label{rem-morph_of_part_G-mod_in_terms_of_epimorph_im}
		Let $(A',\lambda',\alpha')$ and $(A'',\lambda'',\alpha'')$ be $S$-modules, $(A',\tilde{\lambda'},\tilde{\alpha'})$ and $(A'',\tilde{\lambda''},\tilde{\alpha''})$ the corresponding strict modules over the epimorphic images $S'$ and $S''$ of $S$ under $\pi':S\to S'$ and $\pi'':S\to S''$ as given in Proposition~\ref{prop-from_part_act_to_Lausch}. Then a homomorphism $\varphi:A'\to A''$ is a morphism of $S$-modules if and only if
	\begin{enumerate}[(i)]
		\item $\varphi\circ\tilde{\alpha}'\circ\pi'=\tilde{\alpha}''\circ\pi''$ on $E(S)$;
		\item $\varphi\circ \tilde{\lambda}'_{\pi'(s)}=\tilde{\lambda}''_{\pi''(s)}\circ\varphi$ for all $s\in S$.
	\end{enumerate}
Moreover, there is a (unique) homomorphism $\psi:S'\to S''$ satisfying $\psi\circ\pi'=\pi''$, so that (i) and (ii) can be replaced by
	\begin{enumerate}[(i')]
		\item $\varphi\circ\tilde{\alpha}'=\tilde{\alpha}''\circ\psi$ on $E(S')$;
		\item $\varphi\circ\tilde{\lambda}'_s=\tilde{\lambda}''_{\psi(s)}\circ\varphi$  for all $s\in S'$.
	\end{enumerate}
	\end{rem}
	\noindent Indeed, by (ii) of Proposition~\ref{prop-from_part_act_to_Lausch} the equalities (i) and (ii) are precisely (i) and (ii) from Proposition~\ref{prop-from_morph_of_A(S(G))_to_morph_Mod(S(G))} written for $\varphi:A'\to A''$. Furthermore, thanks to the fact that $\varphi\circ \alpha'=\alpha'',$ the homomorphism  $\hat\varphi: A'\rtimes S \to A''\rtimes S$ from Remark~\ref{rem-phi-hat} restricts to a homomorphism $\psi:S'\to S''$ with $\psi \circ {\pi}' = {\pi}''.$  Finally, replacing $\pi''$ with $\psi\circ\pi'$ in (i) and (ii) by surjectivity of $\pi'$ we get (i') and (ii').

	\begin{defn}\label{defn-pi_strict_S-mod}
		Given an inverse semigroup $S$ and an epimorphism $\pi:S\to S'$, we define the concept of a {\it $\pi$-strict} $S$-module as a pair $(A,\pi)$, where $A=(A, \lambda, \alpha)$ is a strict module over $S'=\pi(S)$. 
	\end{defn}

	Note that $\id$-strict $S$-modules are precisely strict $S$-modules in the sense of Definition~\ref{defn-S-mod}. Sometimes we shall omit $\pi$ in $(A,\pi)$ and call $A$ a $\pi$-strict $S$-module. Moreover, if $\pi$ is not specified, we say that $A$ is an {\it epi-strict} $S$-module.

	\begin{defn}\label{defn-morph_of_pi_strict_S-mod}
		Let $(A',\lambda',\alpha')$ and $(A'',\lambda'',\alpha'')$ be epi-strict $S$-modules under some epimorphisms $\pi':S\to S'$ and $\pi'':S\to S''$, respectively. By {\it a morphism of epi-strict $S$-modules} $(A',\pi')\to(A'',\pi'')$ we mean a pair $(\varphi,\psi)$, where $\varphi:A'\to A''$ and $\psi:S'\to S''$ are homomorphisms of semigroups such that
	\begin{enumerate}[(i)]
		\item $\psi\circ\pi'=\pi''$;
		\item $\varphi\circ\alpha'=\alpha''\circ\psi$ on $E(S')$;
		\item $\varphi\circ\lambda_s'=\lambda_{\psi(s)}''\circ\varphi$ for all $s\in S'$.
	\end{enumerate}
	\end{defn}

	It is immediately seen that in the case $S'=S''$, $\pi'=\pi''$ we have $\psi=\id$ and the equalities (ii)--(iii) give the definition of a morphism of strict $S'$-modules.

		Epi-strict $S$-modules and their morphisms form a category (under the coordinatewise composition of morphisms) which will be denoted by $\ESMod S$.

	\begin{defn}\label{defn-stand_epi-str_mod}
		An epi-strict $S$-module isomorphic to $(A,\tilde\lambda,\tilde\alpha)$ given in Proposition~\ref{prop-from_part_act_to_Lausch} will be called {\it standard}. 
	\end{defn}

We are going to describe the standard epi-strict $S$-modules $(A,\pi)$ in terms of $\pi$. Note that for $(A,\pi)$ from Proposition~\ref{prop-from_part_act_to_Lausch} we have $\ker\pi\subseteq\sigma$ (recall that for a homomorphism of semigroups $\phi:S_1\to S_2$ its kernel is defined to be the congruence $\ker\phi=\{(a,b)\in S_1\mid\phi(a)=\phi(b)\}$). It turns out that this condition characterizes $(A,\pi)$ as a standard epi-strict module, if $S$ is $E$-unitary.

	\begin{prop}\label{prop-desc_of_ESMod^*(S)}
		Let $S$ be an $E$-unitary inverse semigroup. An epi-strict $S$-module $(A,\lambda',\alpha')$, $\pi':S\to S'$ is standard if and only if $\ker{\pi'}\subseteq\sigma$.
	\end{prop}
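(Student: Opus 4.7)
The plan is to reduce both directions to identifying $\ker\pi'$ with the kernel of a canonical epimorphism produced by Proposition~\ref{prop-from_part_act_to_Lausch}. First, I would set $\lambda=\lambda'\circ\pi':S\to\End A$ and $\alpha=\alpha'\circ\pi':E(S)\to E(A)$. By the converse part of Proposition~\ref{prop-from_part_act_to_Lausch}, $(A,\lambda,\alpha)$ is an $S$-module with $\alpha$ surjective, so applying the direct part of that proposition to $(A,\lambda,\alpha)$ produces the canonical epimorphism $\tilde\pi:S\to\tilde\pi(S)\subseteq A\rtimes S$, $\tilde\pi(s)=\rho^\natural(\alpha(ss^{-1})\delta_s)$, together with a strict $\tilde\pi(S)$-module structure $(A,\tilde\lambda,\tilde\alpha)$ satisfying $\tilde\alpha\circ\tilde\pi=\alpha=\alpha'\circ\pi'$ and $\tilde\lambda\circ\tilde\pi=\lambda=\lambda'\circ\pi'$. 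I claim that $(A,\pi')$ is standard if and only if $\ker\pi'=\ker\tilde\pi$. Indeed, this equality is equivalent to the existence of an isomorphism of semigroups $\psi:S'\to\tilde\pi(S)$ with $\psi\circ\pi'=\tilde\pi$ (both $\pi'$ and $\tilde\pi$ being surjective), and the two identities above then show that $(\id_A,\psi)$ is automatically an isomorphism $(A,\pi')\to(A,\tilde\pi)$ in $\ESMod S$.

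Next I would describe $\ker\tilde\pi$ explicitly. Since $S$ is $E$-unitary, Remark~\ref{rem-rho_for_E-unitary_semigr} applied to the elements $\alpha(ss^{-1})\delta_s$ and $\alpha(tt^{-1})\delta_t$ yields $\tilde\pi(s)=\tilde\pi(t)$ iff $(s,t)\in\sigma$ and $\alpha(ss^{-1})=\alpha(tt^{-1})$. Because $\alpha'$ is an isomorphism on $E(S')$, the latter is equivalent to $\pi'(s)\pi'(s)^{-1}=\pi'(t)\pi'(t)^{-1}$. In particular $\ker\tilde\pi\subseteq\sigma$, so the forward direction of the proposition is immediate: if $(A,\pi')$ is standard then $\ker\pi'=\ker\tilde\pi\subseteq\sigma$.

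For the converse, I would assume $\ker\pi'\subseteq\sigma$. The inclusion $\ker\pi'\subseteq\ker\tilde\pi$ is straightforward: if $\pi'(s)=\pi'(t)$ then $\pi'(s)\pi'(s)^{-1}=\pi'(t)\pi'(t)^{-1}$, and $(s,t)\in\sigma$ by hypothesis. For the remaining inclusion $\ker\tilde\pi\subseteq\ker\pi'$, take $(s,t)\in\sigma$ with $\pi'(s)\pi'(s)^{-1}=\pi'(t)\pi'(t)^{-1}$. Since $S$ is $E$-unitary, $\sigma$ coincides with the compatibility relation, so $s^{-1}t,\,st^{-1}\in E(S)$; applying $\pi'$ shows that $\pi'(s)$ and $\pi'(t)$ are compatible in $S'$. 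Combining compatibility with equality of left idempotents,
\[
\pi'(s)=\pi'(s)\pi'(s)^{-1}\pi'(s)=\pi'(t)\pi'(t)^{-1}\pi'(s)=\pi'(t)\cdot\pi'(t^{-1}s),
\]
with $\pi'(t^{-1}s)\in E(S')$, so $\pi'(s)\le\pi'(t)$; a symmetric computation gives $\pi'(t)\le\pi'(s)$, whence $\pi'(s)=\pi'(t)$ by antisymmetry of the natural partial order.

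The main obstacle is this last step, which is where $E$-unitarity is indispensable: without it, the hypothesis $(s,t)\in\sigma$ need not imply compatibility of $s$ and $t$, and equality of the left idempotents of $\pi'(s)$ and $\pi'(t)$ alone is not sufficient to force $\pi'(s)=\pi'(t)$ (compare Example~\ref{exm-Z_2_cup_0}).
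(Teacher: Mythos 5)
Your proof is correct and follows essentially the same route as the paper: both reduce the statement to showing that $\ker{\pi'}$ coincides with the kernel of the canonical epimorphism $\tilde\pi$ produced by Proposition~\ref{prop-from_part_act_to_Lausch}, and both use Remark~\ref{rem-rho_for_E-unitary_semigr} for the inclusion $\ker{\pi'}\subseteq\ker{\tilde\pi}$. The only divergence is in the reverse inclusion $\ker{\tilde\pi}\subseteq\ker{\pi'}$, where the paper extracts an element $u\le s,t$ via Lemma~\ref{lem-explicit_form_of_rho} (an argument that does not need $E$-unitarity), whereas you invoke $E$-unitarity to get compatibility of $s$ and $t$ and conclude by antisymmetry of the natural partial order --- both are valid.
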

	\begin{proof}
		The ``only if'' part is obvious: if $(\varphi,\psi):(A',\pi')\to(A'',\pi'')$ is an isomorphism between some  epi-strict $S$-modules, then $\psi\circ\pi'=\pi''$ implies $\ker{\pi'}=\ker{\pi''}$.

	For the ``if'' part consider the $S$-module $(A,\lambda,\alpha)$, where $\lambda=\lambda'\circ\pi'$ and $\alpha=\alpha'\circ\pi'$. By Proposition~\ref{prop-from_part_act_to_Lausch} it can be viewed as a $\pi''$-strict module $(A,\lambda'',\alpha'')$ for the corresponding $\pi'':S\to S''\subseteq A\rtimes S$ (in particular, $\lambda=\lambda''\circ\pi''$ and $\alpha=\alpha''\circ\pi''$). We shall show that $\ker{\pi'}=\ker{\pi''}$. 

For the inclusion $\ker{\pi''}\subseteq\ker{\pi'}$ we do not need $S$ to be $E$-unitary. Indeed, $\pi''(s)=\pi''(t)$ means $(\alpha(ss\m)\delta_s,\alpha(tt\m)\delta_t)\in\rho$ and hence by Lemma~\ref{lem-explicit_form_of_rho} there is $u\le s,t$ such that $\alpha(ss\m)\alpha(uu\m)=\alpha(ss\m)=\alpha(tt\m)$. Since $u\le s,t$ implies $uu\m\le ss\m,tt\m$, then the above equalities can be reduced to $\alpha(uu\m)=\alpha(ss\m)=\alpha(tt\m)$. These are the same as $\pi'(uu\m)=\pi'(ss\m)=\pi'(tt\m)$, because $\alpha=\alpha'\circ\pi'$ on $E(S)$ and $\alpha'$ is a bijection. But then $\pi'(s)=\pi'(ss\m)\pi'(s)=\pi'(uu\m)\pi'(s)=\pi'(uu\m s)=\pi'(u)$, as $u=uu\m s$. Similarly $\pi'(t)=\pi'(u)$. Thus, $\pi'(s)=\pi'(t)$. 

For the converse inclusion suppose that $\pi'(s)=\pi'(t)$. Then $(s,t)\in\sigma$, because $\ker{\pi'}\subseteq\sigma$. Moreover, $\pi'(ss\m)=\pi'(s)\pi'(s)\m=\pi'(t)\pi'(t)\m=\pi'(tt\m)$ and hence $\alpha(ss\m)=\alpha(tt\m)$. Thus, by Remark~\ref{rem-rho_for_E-unitary_semigr} we conclude that $(\alpha(ss\m)\delta_s,\alpha(tt\m)\delta_t)\in\rho$, i.\,e. $\pi''(s)=\pi''(t)$.

Since $\ker{\pi'}=\ker{\pi''}$, then there exists a unique isomorphism $\psi:S'\to S''$ satisfying $\psi\circ\pi'=\pi''$. The equalities $\alpha=\alpha'\circ\pi'=\alpha''\circ\pi''$ and $\lambda=\lambda'\circ\pi'=\lambda''\circ\pi''$ imply $\alpha'=\alpha''\circ\psi$ and $\lambda'=\lambda''\circ\psi$. This means that $(\id,\psi):(A,\pi')\to(A,\pi'')$ is an isomorphism in $\ESMod S$.
	\end{proof}

	\begin{cor}\label{cor-eta:Gamma(G)_to_G}
		Under the conditions of Proposition~\ref{prop-desc_of_ESMod^*(S)} if $(A,\pi')$ is standard, then there is an epimorphism of semigroups $\eta:S'\to S/\sigma$ such that $\eta\circ\pi'$ is the natural map $\sigma^\natural:S\to S/\sigma$. In particular, if $A$ is the standard $\cS G$-module from Corollary~\ref{cor-part_mod_as_epi-strict_S(G)-mod}, then $\eta$ can be viewed as an epimorphism $S'\to G$ by means of $\eta(1_{x_1}\dots 1_{x_n}1_y\delta_y)=y$.
	\end{cor}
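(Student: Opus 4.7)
The plan is to obtain $\eta$ directly from the universal property of the quotient $\sigma^\natural:S\to S/\sigma$, using Proposition~\ref{prop-desc_of_ESMod^*(S)} to ensure the factorization goes through. Since $(A,\pi')$ is standard and $S$ is $E$-unitary, Proposition~\ref{prop-desc_of_ESMod^*(S)} gives $\ker\pi'\subseteq\sigma=\ker\sigma^\natural$. Consequently, whenever $\pi'(s)=\pi'(t)$ we also have $\sigma^\natural(s)=\sigma^\natural(t)$, so setting
\[
    \eta(\pi'(s))=\sigma^\natural(s)
\]
yields a well-defined map $\eta:S'\to S/\sigma$. The identity $\eta\circ\pi'=\sigma^\natural$ holds by construction, and the fact that $\eta$ is a homomorphism of semigroups follows from $\pi'$ being a surjective homomorphism and $\sigma^\natural$ being a homomorphism (a standard ``first isomorphism'' style computation). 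Surjectivity of $\eta$ is immediate, because $\sigma^\natural$ is surjective and factors through $\eta$.

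For the particular case $S=\cS G$, I would identify $\cS G/\sigma$ with $G$ via the isomorphism induced by $\sigma^\natural(\e_{x_1}\dots\e_{x_n}[y])=y$, as recalled right after Lemma~\ref{lem-sigma_is_min_gr_congr}. The epimorphism $\pi:\cS G\to E(A)*_\0 G$ from Corollary~\ref{cor-part_mod_as_epi-strict_S(G)-mod} is given by $\pi(\e_{x_1}\dots\e_{x_n}[y])=1_{x_1}\dots 1_{x_n}1_y\delta_y$, so the equation $\eta\circ\pi=\sigma^\natural$ forces
\[
    \eta(1_{x_1}\dots 1_{x_n}1_y\delta_y)=y,
\]
which gives the claimed formula. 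Since every element of $S'=\pi(\cS G)$ has this form, the description of $\eta$ is complete.

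No real obstacle is expected: the only point that requires a moment of care is verifying that $\eta$ is well defined (which is exactly where the inclusion $\ker\pi'\subseteq\sigma$ granted by Proposition~\ref{prop-desc_of_ESMod^*(S)} is essential), and that the identification $\cS G/\sigma\cong G$ is the one sending the $\sigma$-class of $\e_{x_1}\dots\e_{x_n}[y]$ to $y$, so that the resulting formula for $\eta$ matches the one in the statement.
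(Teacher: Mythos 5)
Your argument is correct and is exactly the intended one: the paper states this corollary without proof, treating it as the immediate consequence of the ``only if'' direction of Proposition~\ref{prop-desc_of_ESMod^*(S)} (standardness gives $\ker\pi'\subseteq\sigma$), after which $\eta$ is the canonical factorization of $\sigma^\natural$ through $\pi'$. Your identification of $\cS G/\sigma$ with $G$ via the remark following Lemma~\ref{lem-sigma_is_min_gr_congr} likewise matches the paper, so the explicit formula $\eta(1_{x_1}\dots 1_{x_n}1_y\delta_y)=y$ comes out as claimed.
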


From now on we shall work only with inverse modules.
	\begin{defn}\label{defn-InvESMod(S)}
		An epi-strict $S$-module $(A,\pi)$ will be called {\it inverse}, if $A$ is an inverse semigroup. The subcategory of inverse epi-strict $S$-modules is denoted by $\InvESMod S$.
	\end{defn}

Following Lausch~\cite{Lausch}, we are going to define a concept of a free object in $\InvESMod S$. Let us first agree on what will be a base of a free inverse epi-strict $S$-module.

In the classical case~\cite{Lausch} a base of a free $S$-module is a so-called $E(S)$-set. It is a disjoint union of sets indexed by $E(S)$. A morphism of $E(S)$-sets is a map which agrees with the partitions of these sets (i.\,e. it sends the $e$-component of one set to the subset of the $e$-component of another set for all $e\in E(S)$). Such sets appear in our situation and they will be called {\it strict} $E(S)$-sets. However, we need a more general concept of an $E(S)$-set.

	\begin{defn}\label{defn-pi-strict_E(S)-set}
		Let $S$ and $S'$ be inverse semigroups and $\pi:S\to S'$ an epimorphism. {\it A $\pi$-strict $E(S)$-set} is a pair $(T,\pi)$ where  $T$ is a strict $E(S')$-set.
	\end{defn}
In particular, $\id$-strict $E(S)$-sets can be identified with the ``ordinary'' (strict) $E(S)$-sets. As above, $\pi$ will be often omitted and a $\pi$-strict $E(S)$-set, whose $\pi$ is not specified, will be called an {\it epi-strict} $E(S)$-set. We shall also use the standard notation $T_e$ for the $e$-component of $T$ ($e\in E(S')$).

	\begin{defn}\label{defn-morph_of_pi-strict-E(S)-sets}
		Given two epi-strict $E(S)$-sets $(T',\pi':S\to S')$ and $(T'',\pi'': S\to S'')$, by {\it a morphism of epi-strict $E(S)$-sets} $(T',\pi')\to(T'',\pi'')$ we mean a pair $(\varphi, \psi)$, where $\psi:S'\to S''$ is a homomorphism of semigroups, satisfying  $\psi\circ\pi'= \pi''$, and $\varphi:T'\to T''$ is a map of sets such that $\varphi(T'_e)\subseteq T''_{\psi(e)}$ for each $e\in E(S')$.
	\end{defn}
If $S'=S''$ and $\pi'=\pi''$, then $(\varphi,\psi)$ is a morphism of $\pi$-strict $E(S)$-sets if and only if $\psi=\id$ and $\varphi$ is a classical~\cite{Lausch} morphism of (strict) $E(S')$-sets. The category of epi-strict $E(S)$-sets and their morphisms will be denoted by $\ESSet S$.

	\begin{rem}\label{rem-pi-str_mod_is_pi-str_set}
		Each inverse $\pi$-strict $S$-module is a $\pi$-strict $E(S)$-set and each morphism of inverse epi-strict $S$-modules is a morphism of epi-strict $E(S)$-sets.
	\end{rem}
	\noindent The first assertion immediately follows from the fact that an inverse $\pi'$-strict $S$-module $(A',\lambda',\alpha')$, $\pi':S\to S'$, being an inverse strict $S'$-module, is a strict $E(S')$-set (see~\cite{Lausch}). Its $e$-component is $A'_e=\{a\in A'\mid aa\m=\alpha'(e)\}$, $e\in E(S')$. Now if $(\varphi,\psi):(A',\pi')\to(A'',\pi'')$ is a morphism of inverse epi-strict $S$-modules and $a\in A'_e$ for some $e\in E(S')$, then (ii) of Definition~\ref{defn-morph_of_pi_strict_S-mod} implies 
	\[
		\varphi(a)\varphi(a)^{-1}=\varphi(aa^{-1})=\varphi(\alpha'(e))=
\alpha''(\psi(e)),
	\]
so $\varphi(a)\in A''_{\psi(e)}$.

	\begin{defn}\label{defn-free-epi-strict-S-mod}
		We say that a module $F\in\InvESMod S$ is {\it free} over a set $T\in\ESSet S$ if there is a  morphism $(\iota,\kappa):T\to F$ in $\ESSet S$ such that for any $A\in\InvESMod S$ and any morphism $(\varphi,\psi):T\to A$ in $\ESSet S$ there exists a unique morphism $(\check\varphi,\check\psi):F\to A$ in $\InvESMod S$ such that $(\varphi,\psi)=(\check\varphi,\check\psi)\circ(\iota,\kappa)$.
	\end{defn}

	We recall Lausch's construction of a free (strict) $S$-module. Given an $E(S)$-set $T$, define $F=F(T)$ to be the disjoint union of components $F_e$, where each $F_e$ is the free abelian group (written additively) over
	\[
		\{(s,t)\in S\times T\mid ss^{-1}=e,\ \ t\in T_f\mbox{ for some }f\ge s\m s\}.
	\]
The sum of the elements $(s,t)\in F_e$ and $(s',t')\in F_{e'}$ of different components is the formal sum $(e's,t)+(es',t')$ in $F_{ee'}$. The structure of an $S$-module is defined by $\lambda^F_{s'}(s,t)=(s's,t)$ and $\alpha^F(e)=0_e$, where $0_e$ is the zero of $F_e$. The embedding $\iota:T\to F$ is given by
	\begin{equation}\label{eq-emb_of_T_into_F(T)}
		T_e\ni t\mapsto(e,t)\in F_e.
	\end{equation}

	\begin{rem}\label{rem-(e,t)_generates_F(T)}
		Notice that each $(s,t)$, where $t\in T_f$, can be written as $\lambda^F_s(f,t)$, because $s\m s\le f$ and hence $sf=(ss\m s)f=s(s\m sf)=s(s\m s)=s$ in this case.
	\end{rem}

	\begin{prop}\label{prop-free_epi-strict_S-mod}
		For any epi-strict $E(S)$-set $(T,\pi:S\to S')$ there exists a free inverse epi-strict $S$-module $F(T)$ over $T$.
	\end{prop}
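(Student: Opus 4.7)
The plan is to construct $F(T)$ directly via Lausch's recipe applied to $T$ viewed as a strict $E(S')$-set. Put $F=F(T)=\bigsqcup_{e\in E(S')}F_e$ as described in the paragraph preceding the proposition, with $S'$-action $\lambda^F_s(s',t)=(ss',t)$, $\alpha^F(e)=0_e$, and inclusion $\iota:T\to F$ given by~\eqref{eq-emb_of_T_into_F(T)}. Being a semilattice of abelian groups, $F$ is inverse; regarded as the pair $(F,\pi)$ it is therefore an inverse epi-strict $S$-module, and $(\iota,\id_{S'}):(T,\pi)\to(F,\pi)$ is a morphism in $\ESSet S$.

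For the universal property, let $(\varphi,\psi):(T,\pi)\to(A,\pi'':S\to S'')$ be a morphism in $\ESSet S$, where $A=(A,\lambda^A,\alpha^A)$ is an inverse $\pi''$-strict $S$-module. Surjectivity of $\pi$ together with the requirement $\check\psi\circ\pi=\pi''=\psi\circ\pi$ forces $\check\psi=\psi$. By Remark~\ref{rem-(e,t)_generates_F(T)} each generator $(s,t)\in F$ equals $\lambda^F_s(\iota(t))$, so any extension $\check\varphi$ of $\varphi$ obeying condition (iii) of Definition~\ref{defn-morph_of_pi_strict_S-mod} must satisfy $\check\varphi(s,t)=\lambda^A_{\psi(s)}(\varphi(t))$; together with additivity inside each $F_e$, this determines $\check\varphi$ uniquely. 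For existence, one takes this as the defining formula on generators. Since $f\ge s^{-1}s$ implies $sfs^{-1}=ss^{-1}=e$, property (ii) of Theorem~\ref{thrm-from_part_iso_to_endo} gives $\check\varphi(s,t)\check\varphi(s,t)^{-1}=\lambda^A_{\psi(s)}(\alpha^A(\psi(f)))=\alpha^A(\psi(e))$ for every generator of $F_e$, so $\check\varphi$ really maps $F_e$ into $A_{\psi(e)}$; conditions (ii)-(iii) of Definition~\ref{defn-morph_of_pi_strict_S-mod} then follow immediately from the defining formula on generators and extension by linearity within each component.

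The main thing to verify is that $\check\varphi$ respects the inverse-semigroup operation of $F$ across different components. For $u\in F_e$ and $v\in F_{e'}$, Lausch's formula gives $uv=\lambda^F_{e'}(u)+\lambda^F_e(v)$ in the abelian group $F_{ee'}$ (the case of generators being exactly $(s,t)+(s',t')=(e's,t)+(es',t')$, extended by bilinearity). Applying $\check\varphi$ and using the already-established relation $\check\varphi\circ\lambda^F_s=\lambda^A_{\psi(s)}\circ\check\varphi$ together with property (i) of Theorem~\ref{thrm-from_part_iso_to_endo} to rewrite $\lambda^A_{\psi(e')}(\cdot)=\alpha^A(\psi(e'))\cdot$ and $\lambda^A_{\psi(e)}(\cdot)=\alpha^A(\psi(e))\cdot$, one obtains $\check\varphi(uv)=\alpha^A(\psi(ee'))\check\varphi(u)\check\varphi(v)=\check\varphi(u)\check\varphi(v)$, the last equality holding because the product $\check\varphi(u)\check\varphi(v)$ already lies in $A_{\psi(ee')}$, whose identity is exactly $\alpha^A(\psi(ee'))$. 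The chief obstacle is the notational translation between Lausch's additive notation on each abelian component $F_e$ and the multiplicative inverse-semigroup notation on $A$; what makes the construction go through smoothly is that $\check\varphi$ is defined directly in terms of $\lambda^A_{\psi(s)}$, so the pulled-back $S'$-action on $A$ is never required to be strict.
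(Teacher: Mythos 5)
Your proof is correct and follows essentially the same route as the paper's: both take Lausch's free strict $S'$-module $F(T)$ over $T$, derive the forced formula $\check\varphi(s,t)=\lambda_{\psi(s)}(\varphi(t))$, verify that it lands in the component $A_{\psi(e)}$ via $sfs^{-1}=ss^{-1}$, and check multiplicativity across components using property (i) of Theorem~\ref{thrm-from_part_iso_to_endo}. The only cosmetic difference is that the paper switches to additive notation on $A$ while you keep the multiplicative one.
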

	\begin{proof}
		Since $(T,\pi:S\to S')$ is an $E(S')$-set in the sense of Lausch, then by~\cite[Proposition~3.1]{Lausch} there is the free Lausch's $S'$-module $(F(T),\lambda^{F(T)},\alpha^{F(T)})$ which therefore can be considered as a $\pi$-strict $S$-module. We shall show that $F(T)$ is free over $T$ in $\InvESMod S$. 

Let $\iota:T\to F(T)$ be the embedding~\eqref{eq-emb_of_T_into_F(T)}. As it was noticed after Definition~\ref{defn-morph_of_pi-strict-E(S)-sets}, the pair $(\iota,\id)$ is a morphism in $\ESSet S$. Take any module $(A,\lambda,\alpha)\in\InvESMod S$ and $(\varphi,\psi):T\to A$ in $\ESSet S$. It will be convenient to use the additive notation for $A.$ We need to find $(\check\varphi,\check\psi)$ in $\InvESMod S$, such that $\check\varphi\circ\iota=\varphi$ and $\check\psi\circ\id=\psi$. The second condition immediately gives $\check\psi=\psi$. The first one uniquely defines $\check\varphi$ as a homomorphism satisfying (iii) of De\-fi\-ni\-tion~\ref{defn-morph_of_pi_strict_S-mod}. Indeed, take $(s,t)\in F(T)_e$ with $t\in T_f$ and note by Remark~\ref{rem-(e,t)_generates_F(T)} that $\check\varphi(s,t)=\check\varphi\circ\lambda^{F(T)}_s(f,t)$,
which should be equal to $\lambda_{\psi(s)}\circ\check\varphi(f,t)=
\lambda_{\psi(s)}\circ\check\varphi\circ\iota(t)=\lambda_{\psi(s)}(\varphi(t))$. Therefore 
	\[
		\check\varphi\left(\sum a_{s,t}(s,t)\right)=\sum a_{s,t}\lambda_{\psi(s)}(\varphi(t))
	\]
and (iii) of Definition~\ref{defn-morph_of_pi_strict_S-mod} is straightforward. Moreover, since $\varphi(T_e)\subseteq A_{\psi(e)}$ for all $e\in E(S')$, we observe that
	\[
		\lambda_{\psi(s)}(\varphi(t))\in\lambda_{\psi(s)}(A_{\psi(f)})\subseteq A_{\psi(s)\psi(f)\psi(s)\m}=A_{\psi(sfs\m)}=A_{\psi(ss\m)}=A_{\psi(e)}, 
	\]
so $\check\varphi$ is automatically a homomorphism from $F(T)_e$ to $A_{\psi(e)}$ for all $e\in E(S')$. Hence, it maps the zero $\alpha^{F(T)}(e)$ of $F_e$ to the zero $\alpha(\psi(e))$ of $A_{\psi(e)}$ and thus (ii) of Definition~\ref{defn-morph_of_pi_strict_S-mod} is also true. It only remains to make sure that $\check\varphi$ is a homomorphism of semigroups (written additively). For $(s,t)\in F(T)_e$ and $(s',t')\in F(T)_{e'}$ we have
	\begin{align*}
		\check\varphi((s,t)+(s',t'))&=\check\varphi((e's,t)+(es',t'))=
\lambda_{\psi(e's)}(\varphi(t))+\lambda_{\psi(es')}(\varphi(t'))\\
		&=\lambda_{\psi(e')}(\lambda_{\psi(s)}(\varphi(t)))+\lambda_{\psi(e)}(\lambda_{\psi(s')}(\varphi(t')))\\
		&=\alpha(\psi(e'))+\lambda_{\psi(s)}(\varphi(t))+\alpha(\psi(e))+\lambda_{\psi(s')}(\varphi(t'))\\
&=(\alpha(\psi(e))+\lambda_{\psi(s)}(\varphi(t)))+(\alpha(\psi(e'))+\lambda_{\psi(s')}(\varphi(t')))\\
&=\lambda_{\psi(s)}(\varphi(t))+\lambda_{\psi(s')}(\varphi(t'))=\check\varphi(s,t)+\check\varphi(s',t').
	\end{align*}
	\end{proof}

	\begin{defn}\label{defn-pi-component}
		The full subcategory of $\InvESMod S$ formed by  the objects $(A,\pi)$ with the same $\pi:S\to S'$ will be called {\it the $\pi$-component} of $\InvESMod S$. 
	\end{defn}

	\begin{rem}\label{rem-pi-comp_is_isom_to_StrMod(S')}
		Note that the $\pi$-component of $\InvESMod S$, where $\pi:S\to S'$, is isomorphic to the category of strict inverse $S'$-modules. In particular, it is an abelian category having enough projectives (see~\cite{Lausch}).
	\end{rem}

	\begin{rem}\label{rem-F(T,pi)_belongs_to_pi-comp}
		The free inverse epi-strict module $F(T)$ over $(T,\pi:S\to S')$ belongs to the $\pi$-component of $\InvESMod S$. Moreover, it is projective in this component.
	\end{rem}

	\noindent Indeed, by Corollary 3.2 from~\cite{Lausch} $F(T)$ is projective as a strict inverse $S'$-module.

	\begin{cor}\label{cor-nonstand_epi-str_mod}
		There are non-standard epi-strict modules.
	\end{cor}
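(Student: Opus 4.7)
My plan is to produce a concrete example of an epi-strict module $(A,\pi)$ for which $\ker\pi$ is not contained in the minimum group congruence $\sigma$, and then invoke Proposition~\ref{prop-desc_of_ESMod^*(S)} (which applies because $\cS G$ is $E$-unitary) to conclude that $(A,\pi)$ cannot be standard.

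Concretely, I would fix a nontrivial group $G$ (e.g.\ $G=\bbZ_2$) and take $S=\cS G$. For $S'$ I choose the trivial inverse monoid $\{1\}$, and let $\pi:\cS G\to\{1\}$ be the (unique) constant homomorphism, which is an epimorphism. For the module structure, I pick any nontrivial abelian group $A$ and view it as a strict $\{1\}$-module via $\alpha(1)=1_A$ (the group identity, which is the unique idempotent of $A$) and $\lambda_1=\id_A$; the axioms (i) and (ii) of Theorem~\ref{thrm-from_part_iso_to_endo} are trivial to verify, and $\alpha$ is a bijection onto $E(A)=\{1_A\}$, so $A$ is indeed a strict $\{1\}$-module. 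Hence $(A,\pi)$ is an object of $\ESMod{\cS G}$ (in fact of $\InvESMod{\cS G}$, since an abelian group is an inverse semigroup).

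To see that $(A,\pi)$ is not standard, I recall that by the discussion preceding Definition~\ref{defn-cross_prod_for_S-mod} we have $\cS G/\sigma\cong G$, so when $G\ne\{1_G\}$ the congruence $\sigma$ is proper, i.e.\ $\sigma\subsetneq\cS G\times\cS G$. But $\pi$ collapses everything, so $\ker\pi=\cS G\times\cS G\not\subseteq\sigma$. Since $\cS G$ is $E$-unitary, Proposition~\ref{prop-desc_of_ESMod^*(S)} then forces $(A,\pi)$ to be non-standard.

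I do not expect any real obstacle: the heavy lifting has already been done in Proposition~\ref{prop-desc_of_ESMod^*(S)}, so the only thing to check carefully is that the trivial-target example satisfies all structural axioms of an epi-strict (resp.\ inverse epi-strict) module, which is immediate because the target inverse monoid has only one element and idempotent.
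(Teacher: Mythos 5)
Your proof is correct and follows essentially the same route as the paper: both produce an epimorphism $\pi$ onto a one-element semigroup, note that $\ker\pi$ is the full relation and hence not contained in $\sigma$ (the paper assumes $\sigma\ne S^2$, you take $S=\cS G$ with $G$ nontrivial so $\cS G/\sigma\cong G$ is nontrivial), and conclude via the ``only if'' direction of Proposition~\ref{prop-desc_of_ESMod^*(S)}. The only cosmetic difference is the choice of witness module (an explicit nontrivial abelian group as a strict module over the trivial monoid, versus the paper's free module $F(T)$ over a $\pi$-strict set), which does not affect the argument.
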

	\noindent As an example consider an inverse semigroup $S$ with $\sigma\ne S^2$ and the epimorphism $\pi:S\to 0$ to the zero semigroup. Clearly $S^2=\ker\pi\not\subseteq\sigma$. Then for any $\pi$-strict set $T$ the free inverse $\pi $-strict $S$-module $F(T)$ is not standard.

According to Propositions~\ref{prop-Exel_th_for_act_on_semigr},  \ref{prop-from_part_act_to_Lausch}, Theorem~\ref{thrm-from_part_iso_to_endo} and Remark~\ref{rem-inv_part_G-mod_is_inv_S(G)-mod}, each inverse partial $G$-module can be seen as an inverse epi-strict $\cS G$-module. 

	\begin{defn}\label{defn-free_inv_part_mod}
		Let $A$ be an inverse partial $G$-module and $T$ an epi-strict $E(\cS G)$-set. The module $A$ is said to be {\it free} over $T$ if the corresponding inverse epi-strict $\cS G$-module is free over $T$ in $\InvESMod{\cS G}$.
	\end{defn}

	We shall  now  examine the construction of a free inverse partial $G$-module in more detail. Let $M$ be a monoid and $\G:G\to M$ a partial homomorphism. Denote by $\G(G)$ the submonoid of $M$ generated by all $\G(x)$, $x\in G$. Then $\G(G)$ is an epimorphic image of $\cS G$, and every epimorphic image $\pi(\cS G)$ of $\cS G$ can be obtained this way by setting $\G=\pi\circ[\phantom{x}]$, where $[\phantom{x}]:G\to\cS G$ is the canonical partial homomorphism $x\mapsto[x]$. Notice that $\G (G)$ is an inverse monoid with $\G (x)\m = \G (x\m ).$

Write $\epsilon_x=\G(x)\G(x\m)=\pi(\varepsilon_x)$. Then one has that 
	\[
		\G(x)\epsilon_y=\epsilon_{xy}\G(x) 
	\] 
for all $x,y\in G$ (see\footnote{The cited results in~\cite{DN} are stated for a partial projective representation $\G $ of $G$ in a $K$-cancellative monoid $M.$ Notice that any monoid $M$ can be transformed into a $K$-cancellative monoid over $K={\rm GF}(2)$ by adding a zero $0_M$ to $M$ (if necessary) and setting $0_K a= 0_M, 1_K a =a,$ $a\in M.$ Then any partial homomorphism $\G :G \to M$ becomes a partial projective representation over $K$ with trivial factor set.}~\cite[Section 7]{DN}). Let $E$ be the submonoid of $\G(G)$ generated by the idempotents $\epsilon_x$. We know from~\cite[Theorem~6]{DN} that $\G(G)$ is an epimorphic image of $E*_\0 G$ where 
	\[
		\0=\0^\G=\{\0_x :  E_{x\m}\to E_x\}
	\] 
is the partial action of $G$ on $E$ which corresponds to $\G$, with $E_x = \epsilon_xE$ and $\0_x(e)=\G(x)e\G(x\m)$, $e\in E_{x\m}$. It follows that $\G(G)$ is a (not necessarily disjoint) union 
	\[
		\G(G)=\bigcup_{x\in G}E_x\G(x).
	\]   
Moreover, since we have an epimorphism  $\cS G \to \Gamma (G),$  we obtain that $E=E(\G(G))$, the set of all idempotents of $\G(G)$. Given an arbitrary element $s=e\G(x)$ ($e\in E_x$) of $\G(G)$, write $e=\epsilon_x\epsilon_{y_1}\dots \epsilon_{y_k}$ ($y_1,\dots, y_k \in G$). Then 
$ss\m=e$ and 
	\[
		s\m s=\epsilon_{x\m}\epsilon_{x\m y_1}\dots\epsilon_{x\m y_k}=\G(x\m)e\G(x)=\0_{x\m}(e).
	\]
Given a strict $E$-set $T=\bigsqcup_{e\in E}T_e$, the free inverse partial  $G$-module $F(T)$ over $T$ can be specified as follows. Taking a fixed $e\in E$ set
	\[
		\G_e=\{e\G(x)\mid x\in G, e \in E_x  \}.
	\] 
The condition $e \in E_x $  means that $e$ can be written as a product $e=\epsilon_x\epsilon_{y_1}\dots \epsilon_{y_k}$ for some $y_1,\dots,y_k\in G$. Observe  that $\G_e$ consists of all $s\in\G(G)$ with $ss\m =e$. Next, for an arbitrary $s=e\G(x)\in\G_e$ let $E(s)$ be the set of all $f\in E$ such that $f\ge s\m s$. In particular,  $E(s)$ contains  all $f\in E$ which can be written in the form $f=(\epsilon_{x\m})^{\nu}\prod_{j\in J}\epsilon_{x\m y_j}$, where $\nu\in\{0,1\}$ and  $J$ is a (possibly empty) subset of $\{1,\dots,k\}$ (if $\nu=0$ and $J=\emptyset$ we assume $f=1_M$). Write $T(s)=\bigsqcup_{f\in E(s)}T_f$. Then the $e$-component $F(T)_e$ of $F(T)$ is the free abelian group with free basis $B_e=\{(s,t)\mid s\in\G_e, t\in T(s)\}$. Denote by $1_{(e)}$ the identity element of $F(T)_e$ and write $1_x=1_{(\epsilon_x)}$, $x\in G$. The elements from $\bigsqcup_{e\in E}B_e$ will be called the canonical generators of $F(T)$. The product of canonical generators $(s,t)\in F(T)_e$ and $(s',t')\in F(T)_{e'}$ from different components is $(e's,t)(es',t')$ 
as an element of $F(T)_{ee'}$. 

The corresponding partial action $\0^{F(T)}$ of $G$ on $F(T)$ consists of the isomorphisms $\0^{F(T)}_x:\cD_{x\m}\to\cD_x$, where 
	\[
		\cD_x=1_xF(T)=\bigsqcup_{ e \in E_x } F(T)_e,
	\]
$x\in G$, and $\0^{F(T)}_x(s,t)=(\G(x)s,t)$ for any canonical generator $(s,t)$ which is contained in $\cD_{x\m}$. If $(s,t)\in\cD_{x\m}$, then $s=\epsilon_{x\m}e\G(y)$ for some $e\in E$ and $y\in G$ such that $\epsilon_{x\m}e\epsilon_y=\epsilon_{x\m}e$. Then     
	\[
		\0^{F(T)}_x(\epsilon_{x\m}e\G(y),t)=(\0_x(\epsilon_{x\m}e)\G(xy),t).
	\]
Writing $e=\epsilon_y\epsilon_{z_1}\dots\epsilon_{z_k}$, where $z_1,\dots, z_k\in G$, we have  $\0_x(\epsilon_{x\m}e)=\epsilon_x\epsilon_{xy}\epsilon_{xz_1}\dots \epsilon_{xz_k}$. An arbitrary element $a\in\cD_{x\m}$ belongs to some component  $F(T)_e$ and can be written as a combination of canonical generators of $F(T)_e$:
	\[
		a = (s_1, t_1) ^ {n_1}\dots (s_r, t_r) ^ {n_r},
	\] 
$n_i\in\bbZ$, and one has 
	\[
		\0^{F(T)}_x(a)=(\0^{F(T)}_x(s_1, t_1))^{n_1}\dots(\0^{F(T)}_x(s_r,t_r))^{n_r}.
	\]  

	\section{Partial cohomology of groups and co\-ho\-mo\-lo\-gy of inverse semigroups}\label{sec-from_part_cohom_to_Lausch_cohom}

    Recall from~\cite{Lausch} that, given an inverse semigroup $S$ and a strict inverse $S$-module $A$, the $n$-th cohomology group of $S$ with values in $A$, denoted by $H^n_S(A)$, $n\ge 0$, is $\Ext_S^n(\bbZ_S,A)$, where $\bbZ_S$ is the ``trivial'' strict inverse $S$-module with $(\bbZ_S)_e=\{n_e\mid n\in\bbZ\}$, $n_e+m_f=(n+m)_{ef}$, $\lambda_s(n_e)=n_{ses\m}$, $\alpha(e)=0_e$. It was proved in~\cite[Theorem 2.7]{Loganathan81} that this cohomology is a particular case of the cohomology of small categories. Moreover, when $S$ is a monoid, \cite[Theorem 2.9]{Loganathan81} implies that $H^n_S(A)$ can be seen as the cohomology of the category $\mathcal D(S)$, introduced by J.~Leech in~\cite{Leech75}.
    
	\begin{defn}\label{defn-H^n(S,A,pi)}
		The cohomology groups of an inverse semigroup $S$ with va\-lues in an inverse epi-strict $S$-module $(A,\pi:S\to S')$ can be defined in the following way: $H^n(S,A)=H_{S'}^n(A)$, where on the right-hand side $A$ is meant to be a strict $S'$-module (see~\cite{Lausch}). 
	\end{defn}

Under the conditions of the above definition denote by $\Hom_\pi(-,A)$ the restriction of $\Hom(-,A)$ to the $\pi$-component of $\InvESMod S$. Then it is an additive functor from an abelian category to $\Ab$, and the above cohomology is $R^n\Hom_\pi(-,A)$ applied to $\bbZ_{S'}$. Hence, $H^n(S,A)$ can be calculated by taking an appropriate projective resolution of $\bbZ_{S'}$ in the $\pi$-component of $\InvESMod S$. It turns out that for $S=\cS G$ and $(A,\pi:S\to S')$, which induces an inverse $(A,\0)\in\pMod G$, we can construct a free resolution of $\bbZ_{S'}$ (whose terms are free in $\InvESMod S$), such that the corresponding cohomology groups with values in $(A,\pi)$ are precisely $H^n(G,A)$.

	\begin{defn}\label{defn-free_res_of_Z_S}
		Let $\G:G\to S$ be a partial homomorphism of a group $G$ in an inverse monoid $S$. For any positive integer $n$ denote by $V_n$ the strict $E(S)$-set, whose $e$-component is the (possibly empty) set of ordered $n$-tuples $(x_1,\dots,x_n)\in G^n$, such that $\epsilon_{(x_1,\dots,x_n)}=e$, where
	\begin{align}\label{eq-e_(x_1,...,x_n)}
		\epsilon_{(x_1,\dots,x_n)}=\epsilon_{x_1}\epsilon_{x_1x_2}\dots\epsilon_{x_1\dots x_n}=\G(x_1)\dots\G(x_n)\G(x_n)\m\dots\G(x_1)\m\in E(S).
	\end{align}
	
For $n=0$ define $(V_n)_e$ to be the one-element set $\{(\phantom{x})\}$ if $e=1_S$, and $\emptyset$ otherwise. The free $S$-module over $V_n$ will be denoted by $R_n$.
	\end{defn}
 
	\begin{lem}\label{lem-Hom_S(R_n,A)=C^n(G,theta,A)}
		Let $(A,\0)$ be an inverse partial $G$-module and the pair $(A,\lambda,\alpha),\pi:\cS G\to S$ a standard inverse $\pi$-strict $\cS G$-module inducing $(A,\0)$ in the sense of~\eqref{eq-lambda_and_alpha-for-S}. Set $\Gamma=\pi\circ[\phantom{x}]:G\to S$ and consider $R_n=R_n(\G)$ as an element of the $\pi$-component of $\InvESMod{\cS G}$. Then the abelian group $\Hom_\pi(R_n,A)$ is isomorphic to $C^n(G,A)$.
	\end{lem}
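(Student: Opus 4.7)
The plan is to use the universal property of the free module. By Remark~\ref{rem-F(T,pi)_belongs_to_pi-comp}, $R_n = F(V_n)$ sits in the $\pi$-component of $\InvESMod{\cS G}$, and the proof of Proposition~\ref{prop-free_epi-strict_S-mod} exhibits the canonical map $(\iota,\kappa) : V_n \to R_n$ with $\kappa = \id$. Applying Definition~\ref{defn-free-epi-strict-S-mod} within the $\pi$-component (so $\check\psi = \id$) yields a natural bijection between $\Hom_\pi(R_n, A)$ and the set of morphisms $(\varphi,\id) : V_n \to A$ in $\ESSet{\cS G}$. By Definition~\ref{defn-morph_of_pi-strict-E(S)-sets} such a morphism is simply a function $\varphi : V_n \to A$ satisfying $\varphi((V_n)_e) \subseteq A_e$ for every $e \in E(S)$, where $A_e$ is the $e$-component of $A$ viewed as an $E(S)$-set via Remark~\ref{rem-pi-str_mod_is_pi-str_set}.

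The next step is to identify $A_e$ concretely. Since $A$ is inverse and $\alpha : E(S) \to E(A)$ is an isomorphism (because $(A,\pi)$ is standard inverse), the Clifford decomposition gives $A = \bigsqcup_{e \in E(S)} A_e$ with $A_e = \{a \in A : aa\m = \alpha(e)\}$ an abelian group with identity $\alpha(e)$. For $e = \epsilon_{(x_1,\dots,x_n)}$, multiplicativity of $\alpha$ together with \eqref{eq-lambda_and_alpha-for-S} gives
\[
    \alpha(e) = \alpha(\pi(\e_{x_1}))\alpha(\pi(\e_{x_1x_2}))\cdots\alpha(\pi(\e_{x_1\cdots x_n})) = 1_{x_1}1_{x_1x_2}\cdots 1_{x_1\cdots x_n},
\]
which is precisely the identity of the ideal $A_{(x_1,\dots,x_n)}$. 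Hence $A_e = \cU{A_{(x_1,\dots,x_n)}}$ as abelian groups. Combining this with the previous paragraph, a morphism $V_n \to A$ in the $\pi$-component corresponds exactly to a function $f : G^n \to A$ with $f(x_1,\dots,x_n) \in \cU{A_{(x_1,\dots,x_n)}}$, i.e.\ to an element of $C^n(G,A)$. The case $n = 0$ is analogous: $V_0$ has a single element in its $1_{\cS G}$-component, which is sent to an invertible element of $A$, matching the definition of a $0$-cochain.

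Finally, I would verify that the bijection $\varphi \mapsto f$, where $f(x_1,\dots,x_n) = \varphi(\iota(x_1,\dots,x_n))$, is a group isomorphism. The group $\Hom_\pi(R_n, A)$ is abelian because the $\pi$-component is an abelian category (Remark~\ref{rem-pi-comp_is_isom_to_StrMod(S')}), and its addition is pointwise in each component $A_e$; evaluated on the canonical generators $\iota(x_1,\dots,x_n)$ this reduces to the group operation in $\cU{A_{(x_1,\dots,x_n)}}$, which is exactly the pointwise multiplication defining the group structure on $C^n(G,A)$. The main technical point to get right is the identification $A_e = \cU{A_{(x_1,\dots,x_n)}}$—in particular, that the inverse of $a$ in the inverse semigroup $A$ coincides with the group inverse of $a$ in the ideal $A_{(x_1,\dots,x_n)}$—after which the conclusion follows purely from the universal property of $R_n$.
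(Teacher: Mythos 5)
Your proposal is correct and follows essentially the same route as the paper's proof: both reduce $\Hom_\pi(R_n,A)$ to morphisms of $\pi$-strict $E(\cS G)$-sets $V_n\to A$ via freeness of $R_n$, and then identify the component $A_{\epsilon_{(x_1,\dots,x_n)}}=\{a\mid aa\m=\alpha(\epsilon_{(x_1,\dots,x_n)})\}$ with $\cU{A_{(x_1,\dots,x_n)}}$ using $\alpha(\epsilon_{(x_1,\dots,x_n)})=1_{x_1}1_{x_1x_2}\cdots 1_{x_1\dots x_n}$. Your closing check that the bijection respects the group structures is a sensible extra detail the paper leaves implicit.
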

	\begin{proof}
		Since $R_n$ is the free $S$-module over $V_n=V_n(\G)$, each morphism from $R_n$ to $A$ is fully determined by its values on $V_n$. So, $\Hom_\pi(R_n,A)$ can be identified with the set of morphisms of $\pi$-strict $E(\cS G)$-sets from $V_n$ to $A$. A pair $(\varphi,\psi)$ is such a morphism if and only if $\psi=\id_S$ and  $\varphi(v)\in A_e$ for all $v\in(V_n)_e$, $e\in E(S)$. If $n=0$, then $\varphi$ is identified with the image of $(\phantom{x})\in(V_n)_{1_S}$, which should belong to $A_{1_S}$, i.\,e. be invertible with respect to
	\[
		\alpha(1_S)=\alpha\circ\pi(1_{\cS G})=\alpha^\0(1_{\cS G})=1_A.
	\] 
Thus, $\Hom_\pi(R_0,A)\cong\cU A=C^0(G,A)$. For $n>0$ the function $\varphi$ can be viewed as a map $G^n\to A$, such that $\varphi(x_1,\dots,x_n)\in A_e$, where $e=\epsilon_{(x_1,\dots,x_n)}$. By definition $A_e$ consists of $a\in A$, which are invertible with respect to 
	\[
		\alpha(e)=\alpha\circ\pi(\e_{x_1}\e_{x_1x_2}\dots\e_{x_1\dots x_n})=1_{x_1}1_{x_1x_2}\dots 1_{x_1\dots x_n}.
	\]
So, $A_e=\cU{A_{(x_1,\dots,x_n)}}$ and hence $\Hom_\pi(R_n,A)\cong C^n(G,A)$.
	\end{proof}

	\begin{defn}\label{defn-partial_n}
		Under the conditions of Definition~\ref{defn-free_res_of_Z_S} for all $n\ge 1$ define the  $S$-module morphisms $\partial_n:R_n\to R_{n-1}$ by
	\begin{align*}
		\partial_n(x_1,\dots,x_n)&=(\G(x_1)\epsilon_{(x_2,\dots,x_n)},(x_2,\dots,x_n))\\
	&+\sum_{i=1}^{n-1}(-1)^i(\epsilon_{(x_1,\dots,x_n)},(x_1,\dots,x_ix_{i+1},\dots,x_n))\\
	&+(-1)^n(\epsilon_{(x_1,\dots,x_n)},(x_1,\dots,x_{n-1})), n>1,\\
		\partial_1(x)&=(\G(x),(\phantom{x}))-(\epsilon_x,(\phantom{x})),
	\end{align*}
and $\partial_0:R_0\to\bbZ_S$ by $\partial_0(\phantom{x})=1_{1_S}$.
	\end{defn}

	\begin{lem}\label{lem-delta^nf=fpartial_n}
		Under the conditions of Lemma~\ref{lem-Hom_S(R_n,A)=C^n(G,theta,A)} for any $n\ge 0$ and for arbitrary $f\in C^n(G,A)$ we have $\delta^nf=f\circ\partial_{n+1}$, where $f$ and $\delta^nf$ are considered as homomorphisms of $\pi$-strict $\cS G$-modules $R_n\to A$ and $R_{n+1}\to A$ respectively.
	\end{lem}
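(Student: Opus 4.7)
My plan is to translate the additive expression $\partial_{n+1}(x_1,\dots,x_{n+1})\in R_n$ term by term into a multiplicative expression in $A_{(x_1,\dots,x_{n+1})}$ by applying $f$, and then verify that the result agrees with $(\delta^n f)(x_1,\dots,x_{n+1})$. The key observation, from Remark~\ref{rem-(e,t)_generates_F(T)}, is that for any canonical generator $(s,(y_1,\dots,y_n))\in(R_n)_{ss^{-1}}$ with $(y_1,\dots,y_n)\in(V_n)_f$ and $f\ge s^{-1}s$, one has
\[
f((s,(y_1,\dots,y_n)))=\lambda_s(f(y_1,\dots,y_n)),
\]
where on the right $f(y_1,\dots,y_n)$ denotes the invertible element of $A_{(y_1,\dots,y_n)}$ corresponding to the cochain value at $(y_1,\dots,y_n)$ (via the identification of Lemma~\ref{lem-Hom_S(R_n,A)=C^n(G,theta,A)}). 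Addition in $R_n$ will correspond to multiplication in $A$, and the integer coefficients $\pm 1$ become exponents $\pm 1$ taken inside the ambient unital ideal.

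For the first summand $(\Gamma(x_1)\epsilon_{(x_2,\dots,x_{n+1})},(x_2,\dots,x_{n+1}))$ I decompose $\lambda_{\Gamma(x_1)\epsilon_{(x_2,\dots,x_{n+1})}}=\lambda_{\Gamma(x_1)}\circ\lambda_{\epsilon_{(x_2,\dots,x_{n+1})}}$; property (i) of Theorem~\ref{thrm-from_part_iso_to_endo} turns the second factor into multiplication by $\alpha(\epsilon_{(x_2,\dots,x_{n+1})})=1_{(x_2,\dots,x_{n+1})}$, which is absorbed by $f(x_2,\dots,x_{n+1})\in A_{(x_2,\dots,x_{n+1})}$, and~\eqref{eq-lambda_and_alpha-for-S} gives $\lambda_{\Gamma(x_1)}(a)=\theta_{x_1}(1_{x_1^{-1}}a)$; this yields exactly the leading factor $\theta_{x_1}(1_{x_1^{-1}}f(x_2,\dots,x_{n+1}))$ of $(\delta^nf)(x_1,\dots,x_{n+1})$. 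For the remaining terms, the first component of each canonical generator is the idempotent $\epsilon_{(x_1,\dots,x_{n+1})}$, so applying $f$ simply prepends the factor $\alpha(\epsilon_{(x_1,\dots,x_{n+1})})=1_{(x_1,\dots,x_{n+1})}$ to the corresponding cochain value $f(x_1,\dots,x_ix_{i+1},\dots,x_{n+1})$ (respectively $f(x_1,\dots,x_n)$).

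It remains to match the resulting product with the definition of $\delta^nf$. The subtle point, and the one I expect to require care, is the meaning of the $\pm 1$ exponents: in~\eqref{eq-coboundary_hom} each $f(x_1,\dots,x_ix_{i+1},\dots,x_{n+1})^{-1}$ is inverted inside its own ideal $A_{(x_1,\dots,x_ix_{i+1},\dots,x_{n+1})}$, whereas after applying $f\circ\partial_{n+1}$ the inverse is naturally taken inside the smaller ideal $A_{(x_1,\dots,x_{n+1})}$. I would dispatch this by noting that $1_{(x_1,\dots,x_{n+1})}\le 1_{(x_1,\dots,x_ix_{i+1},\dots,x_{n+1})}$, so multiplying any $u\in\cU{A_{(x_1,\dots,x_ix_{i+1},\dots,x_{n+1})}}$ by $1_{(x_1,\dots,x_{n+1})}$ produces an invertible element of $A_{(x_1,\dots,x_{n+1})}$ whose inverse there is $1_{(x_1,\dots,x_{n+1})}u^{-1}$, and that because the leading factor already lives in $A_{(x_1,\dots,x_{n+1})}$, the prepended idempotents $1_{(x_1,\dots,x_{n+1})}$ are absorbed by the product. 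This identifies $f\circ\partial_{n+1}$ with $\delta^nf$ for $n\ge 1$; the case $n=0$ is handled by the same argument using $\partial_1(x)=(\Gamma(x),(\,))-(\epsilon_x,(\,))$, which directly gives $\theta_x(1_{x^{-1}}a)\cdot a^{-1}=(\delta^0 a)(x)$.
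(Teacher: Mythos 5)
Your proposal is correct and follows essentially the same route as the paper: both identify $f$ as a module morphism via $f\bigl((s,t)\bigr)=\lambda_s(f(t))$, convert $\lambda_{\G(x)}$ to $\0_x(1_{x\m}\,\cdot\,)$ and $\lambda_{\epsilon_{(x_1,\dots,x_{n+1})}}$ to multiplication by $1_{x_1}\cdots 1_{x_1\dots x_{n+1}}$, and absorb the resulting idempotents using $(\delta^nf)(x_1,\dots,x_{n+1})\in A_{(x_1,\dots,x_{n+1})}$. Your explicit handling of the inverses taken in the smaller ideal is a point the paper leaves implicit, but it is the same argument.
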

	\begin{proof}
		 Denote by $\iota_n$ the natural embedding $V_n\to R_n$ given in~\eqref{eq-emb_of_T_into_F(T)}. If $n=0$, then $f\in C^0(G,A)$, considered as $f:R_0\to A$, is identified with $f(\iota_0(\phantom{x}))=a\in\cU A$, where $\iota_0(\phantom{x})=(1_S,(\phantom{x}))$. Therefore, for any $x\in G$:
	\begin{align*}
		f(\partial_1(x))&=f(\G(x),(\phantom{x}))f(\epsilon_x,(\phantom{x}))\m
=\lambda_{\G(x)}(a)\lambda_{\epsilon_x}(a)\m=\lambda_{\pi([x])}(a)\lambda_{\pi(\e_x)}(a)\m\\
&=\0_x(1_{x\m}a)(a1_x)\m=\0_x(1_{x\m}a)a\m 1_x=\0_x(1_{x\m}a)a\m=(\delta^0a)(x).
	\end{align*}
Now let $n>0$ and $f\in C^n(G,A)$. Considering $f$ as the morphism $R_n\to A$, we notice that
	\[
		f(x_1,\dots,x_n)=f(\iota_n(x_1,\dots,x_n))=f(\epsilon_{(x_1,\dots,x_n)},(x_1,\dots,x_n)).
	\]
Therefore,
	\begin{align*}
		f(\partial_{n+1}(x_1,\dots,x_{n+1}))&=\lambda_{\G(x_1)}(f(x_2,\dots,x_{n+1}))\\
&\prod_{i=1}^n\lambda_{\epsilon_{(x_1,\dots ,x_{n+1})}}(f(x_1,\dots,x_ix_{i+1},\dots,x_{n+1}))^{(-1)^i}\\
&\lambda_{\epsilon_{(x_1, \dots , x_{n+1})}}(f(x_1,\dots,x_n))^{(-1)^{n+1}}.
	\end{align*}
As we have seen above, $\lambda_{\Gamma(x)}(a)=\0_x(1_{x\m}a)$ and $\lambda_{\epsilon_{x}}(a)=a1_x$. Thus,
	\[
		f(\partial_{n+1}(x_1,\dots,x_{n+1}))=(\delta^nf)(x_1,\dots,x_{n+1})1_{x_1}1_{x_1x_2}\dots 1_{x_1\dots x_{n+1}}
	=(\delta^nf)(x_1,\dots,x_{n+1}),
	\]
because $(\delta^nf)(x_1,\dots,x_{n+1})\in A_{(x_1,\dots,x_{n+1})}$.
	\end{proof}

	\begin{cor}\label{cor-partial^2=0}
		Under the conditions of Lemma~\ref{lem-Hom_S(R_n,A)=C^n(G,theta,A)} 
for any $n\ge 2$ the composition $\partial_{n-1}\circ\partial_n$ is the zero morphism from $R_n$ to $R_{n-2}$ in the $\pi$-component of $\InvESMod{\cS G}$.
	\end{cor}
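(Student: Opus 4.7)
The plan is to derive the corollary from Proposition~\ref{prop-coboundary_hom} together with the correspondence of Lemma~\ref{lem-delta^nf=fpartial_n}, rather than by a direct combinatorial expansion. Concretely, for any inverse partial $G$-module $A$ admitting a standard $\pi$-strict $\cS G$-module structure (with the same $\pi$ as in the setup of Definition~\ref{defn-free_res_of_Z_S}) and any $f\in\Hom_\pi(R_{n-2},A)$, Lemma~\ref{lem-delta^nf=fpartial_n} applied twice gives $f\circ\partial_{n-1}=\delta^{n-2}f$ as morphisms $R_{n-1}\to A$, and then $(\delta^{n-2}f)\circ\partial_n=\delta^{n-1}(\delta^{n-2}f)$ as morphisms $R_n\to A$. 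By Proposition~\ref{prop-coboundary_hom} the latter equals $e_n$.

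Next I would check that $e_n$, transported through the isomorphism of Lemma~\ref{lem-Hom_S(R_n,A)=C^n(G,theta,A)}, is the zero morphism $R_n\to A$ in the $\pi$-component. Indeed, a canonical generator $(x_1,\dots,x_n)\in(V_n)_{\epsilon_{(x_1,\dots,x_n)}}$ is sent by $e_n$ to $1_{x_1}\cdots 1_{x_1\cdots x_n}=\alpha(\epsilon_{(x_1,\dots,x_n)})$, which is precisely the zero of the corresponding component of the strict $S'$-module $A$ when the latter is written additively. Consequently $f\circ(\partial_{n-1}\circ\partial_n)=0$ for every such $f$.

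Finally I would specialize to $A=R_{n-2}$ and $f=\id_{R_{n-2}}$. For this to be legal I must confirm that $R_{n-2}$ itself can play the role of $A$ in Lemma~\ref{lem-Hom_S(R_n,A)=C^n(G,theta,A)}. Since the original $\pi:\cS G\to S$ arises from an inverse partial $G$-module, the argument used in Proposition~\ref{prop-desc_of_ESMod^*(S)} shows $\ker\pi\subseteq\sigma$, whence by that same proposition \emph{every} $\pi$-strict $\cS G$-module is standard; in particular so is $R_{n-2}$. The converse direction of Corollary~\ref{cor-part_mod_as_epi-strict_S(G)-mod} then endows $R_{n-2}$ with the structure of an inverse partial $G$-module via~\eqref{eq-lambda_and_alpha-for-S}, so Lemma~\ref{lem-Hom_S(R_n,A)=C^n(G,theta,A)} becomes applicable with $A=R_{n-2}$. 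Taking $f=\id$ in the vanishing $f\circ\partial_{n-1}\circ\partial_n=0$ collapses it to $\partial_{n-1}\circ\partial_n=0$, as required.

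The only genuine subtlety, and the step I would be most careful about, is the bookkeeping in the previous paragraph: one must verify that the $\pi$ fixed at the outset (used to build $R_n$, $R_{n-1}$, $R_{n-2}$) is also a legitimate choice of $\pi$ for $R_{n-2}$ in the sense demanded by Lemma~\ref{lem-Hom_S(R_n,A)=C^n(G,theta,A)}. This is where $\ker\pi\subseteq\sigma$ and Proposition~\ref{prop-desc_of_ESMod^*(S)} are essential; once this is in place the identification $e_n\leftrightarrow 0$ in additive notation finishes the proof.
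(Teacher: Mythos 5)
Your argument is correct and is essentially the paper's own proof: both apply Lemma~\ref{lem-delta^nf=fpartial_n} twice together with Proposition~\ref{prop-coboundary_hom} to get $f\circ\partial_{n-1}\circ\partial_n=\delta^{n-1}\delta^{n-2}f=e_n$, identify $e_n$ with the zero morphism via Lemma~\ref{lem-Hom_S(R_n,A)=C^n(G,theta,A)}, and specialize to $f=\id_{R_{n-2}}$ after noting that $E$-unitarity of $\cS G$ and Proposition~\ref{prop-desc_of_ESMod^*(S)} make every module in the $\pi$-component (in particular $R_{n-2}$) standard, hence an inverse partial $G$-module. The bookkeeping you flag as the delicate point is exactly the step the paper also singles out.
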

	\noindent Indeed, by assumption the module $(A,\pi)$ is standard. Since $\cS G$ is $E$-unitary, by Proposition~\ref{prop-desc_of_ESMod^*(S)} any module from the $\pi$-component of the ca\-te\-go\-ry $\InvESMod{\cS G}$ is standard, in particular all $R_n$ have this property. Thus, without loss of generality we may assume that $(R_n,\pi)$ is obtained from some inverse partial $G$-module $(R_n,\0_n)$ for all $n\ge 0$. Now taking $A'=R_{n-2}$ and $f=\id_{R_{n-2}}\in\Hom_\pi(R_{n-2},A')$ by Lemma~\ref{lem-delta^nf=fpartial_n} and Proposition~\ref{prop-coboundary_hom} we see that
	\[
		\partial_{n-1}\circ\partial_n=f\circ\partial_{n-1}\circ\partial_n=
\delta^{n-1}\delta^{n-2}f=e_n.
	\]
In view of Lemma~\ref{lem-Hom_S(R_n,A)=C^n(G,theta,A)} the cochain $e_n$ can be seen as the morphism from $R_n$ to $R_{n-2}$ which maps each element of $(V_n)_e$ to the zero of $(R_{n-2})_e$, $e\in E(S)$. So, $e_n=0\in\Hom_\pi(R_n,R_{n-2})$.

	\begin{defn}\label{defn-sigma_n}
		Under the conditions of Lemma~\ref{lem-Hom_S(R_n,A)=C^n(G,theta,A)} let $\eta:S\to G$ be as in Corollary~\ref{cor-eta:Gamma(G)_to_G} and for all $n\ge 0$ define the morphism  $\sigma_n:R_n\to R_{n+1}$ of $\pi$-strict $E(\cS G)$-sets as follows. On each $e$-component of $R_n$ it is the homomorphism of abelian groups $(R_n)_e\to(R_{n+1})_e$ given by
	\begin{align*}
	    \sigma_0(s,(\phantom{x}))&=(ss\m,(\eta(s))),\\
	    \sigma_n(s,(x_1,\dots,x_n))&=(ss\m,(\eta(s),x_1,\dots,x_n)),\ \ n>0.
	\end{align*}
We shall also define $\sigma_{-1}:\bbZ_S\to R_0$ to be the homomorphism $(\bbZ_S)_e\to (R_0)_e$, such that $\sigma_{-1}(1_e)=(e,(\phantom{x}))$, $e\in E(S)$.
	\end{defn}

The fact that $\sigma_n$ is well-defined will follow from the next lemma.

	\begin{lem}\label{lem-properties_of_eta}
		Under the conditions of Definition~\ref{defn-sigma_n} for all $x \in G$ and $s\in S$
	\begin{enumerate}[(i)]
		\item $\eta(\G(x))=x;$
		\item $ss\m\G(\eta(s))=s;$
		\item $s\G(\eta(s)\m)=ss\m .$
	\end{enumerate}
	\end{lem}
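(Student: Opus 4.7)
The plan is to reduce everything to the normal form of elements of $\cS G$ pushed through $\pi$, and exploit that $\eta$ is essentially the maximum-group-image map.

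For (i), the map $\eta$ was constructed in Corollary~\ref{cor-eta:Gamma(G)_to_G} so that $\eta\circ\pi=\sigma^\natural:\cS G\to\cS G/\sigma$, and the isomorphism $\cS G/\sigma\cong G$ identifies the $\sigma$-class of $[x]$ with $x$. Since $\Gamma(x)=\pi([x])$ by definition, applying $\eta$ gives $\eta(\Gamma(x))=\sigma^\natural([x])=x$ immediately.

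For (ii) and (iii), I would first invoke the normal-form result for $\cS G$ recalled in Section~\ref{sect-modules_over_S(G)}: every element of $\cS G$ has the form $\e_{x_1}\dots\e_{x_n}[y]$. Since $\pi$ is surjective and $\Gamma=\pi\circ[\phantom{x}]$, every $s\in S$ can be written as
\[
    s=\epsilon_{x_1}\dots\epsilon_{x_n}\Gamma(y)
\]
for suitable $x_1,\dots,x_n,y\in G$ (where $\epsilon_x=\pi(\e_x)=\Gamma(x)\Gamma(x)^{-1}$). Applying $\eta$ to this expression and using (i) together with the fact that idempotents are killed by $\sigma^\natural$ yields $\eta(s)=y$. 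A direct computation using $\Gamma(y)^{-1}=\Gamma(y^{-1})$ and the commutativity of idempotents gives
\[
    ss^{-1}=\epsilon_{x_1}\dots\epsilon_{x_n}\,\Gamma(y)\Gamma(y^{-1})=\epsilon_{x_1}\dots\epsilon_{x_n}\epsilon_y.
\]

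With these in hand, (ii) becomes
\[
    ss^{-1}\Gamma(\eta(s))=\epsilon_{x_1}\dots\epsilon_{x_n}\epsilon_y\Gamma(y)=\epsilon_{x_1}\dots\epsilon_{x_n}\Gamma(y)\Gamma(y^{-1})\Gamma(y)=\epsilon_{x_1}\dots\epsilon_{x_n}\Gamma(y)=s,
\]
where the cancellation $\Gamma(y)\Gamma(y^{-1})\Gamma(y)=\Gamma(y)$ is just the inverse-semigroup identity $s s^{-1} s=s$ applied to $s=\Gamma(y)$. Similarly (iii) reads
\[
    s\Gamma(\eta(s)^{-1})=\epsilon_{x_1}\dots\epsilon_{x_n}\Gamma(y)\Gamma(y^{-1})=\epsilon_{x_1}\dots\epsilon_{x_n}\epsilon_y=ss^{-1}.
\]

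No real obstacle is expected here; the only thing to be careful about is checking that $\eta$ is well-defined on the normal form (i.e.\ that the value $y$ does not depend on the choice of representative). This is handled by the fact, already embedded in Corollary~\ref{cor-eta:Gamma(G)_to_G}, that $\ker(\sigma^\natural)$ on $\cS G$ identifies exactly those words that agree in their ``group letter'' $y$, so the formula $\eta(\epsilon_{x_1}\dots\epsilon_{x_n}\Gamma(y))=y$ is consistent.
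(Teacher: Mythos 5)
Your proposal is correct and follows essentially the same route as the paper: part (i) from $\eta\circ\pi=\sigma^\natural$, and parts (ii)--(iii) by writing $s$ in the image of the normal form $\e_{x_1}\dots\e_{x_n}[y]$, reading off $\eta(s)=y$, and verifying the two identities by a short computation with idempotents and $\Gamma(y)\Gamma(y^{-1})\Gamma(y)=\Gamma(y)$. The only difference is cosmetic: the paper performs the final verification as ``equalities in $\cS G$'' pushed through $\pi$, while you carry it out directly in $S$ using that $\Gamma(G)$ is an inverse monoid with $\Gamma(x)^{-1}=\Gamma(x^{-1})$ — your version actually supplies the detail the paper leaves as ``easily verified.''
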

	\begin{proof}
		Notice that $\eta\circ\pi$ is the natural epimorphism $\cS G\to G,$ so that $ \eta(\G(x))=\eta(\pi([x]))=x,$ which is (i). If $s=\pi(\e_{x_1}\dots\e_{x_n}[y])$, then $\eta (s)=y $ and  $\G(\eta(s))=\G (y) = \pi([y])$, so (ii) and (iii) follow from  equalities in $\cS G$, which can be easily verified. 
	\end{proof}

    In particular, it follows from (ii) that $s\le\G(\eta(s))$, so $ss\m\le\epsilon_{\eta(s)}$ and thus $(ss\m,(\eta(s)))\in R_1$. Now taking $(s,(x_1,\dots,x_n))\in R_n$, we show that $(ss\m,(\eta(s),x_1,\dots,x_n))\in R_{n+1}$, i.\,e. $ss\m\le \epsilon_{(\eta(s),x_1,\dots,x_n)}.$  Using (ii) and (iii) and the fact that $s\epsilon_{(x_1,\dots,x_n)}=s$ (this follows from $s\m s\le \epsilon_{(x_1,\dots,x_n)}$), we see that
	\begin{align*}
		ss\m \epsilon_{(\eta(s),x_1,\dots,x_n)}=ss\m\G(\eta(s))\epsilon_{(x_1,\dots,x_n)}\G(\eta(s)\m)&=
s\epsilon_{(x_1,\dots,x_n)}\G(\eta(s)\m)\\
	&=s\G(\eta(s)\m)=ss\m.
	\end{align*}
	
	\begin{lem}\label{lem-sigma_n_is_contr_homotopy}
		Under the conditions of Lemma~\ref{lem-Hom_S(R_n,A)=C^n(G,theta,A)} we have 
		\begin{align}\label{eq-contr-homotopy}
		    \partial_{n+1}\circ\sigma_n+\sigma_{n-1}\circ\partial_n=\id_{R_n},\ \ n\ge 0.
		\end{align}
	\end{lem}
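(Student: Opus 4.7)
The plan is to verify~\eqref{eq-contr-homotopy} on a set of additive generators of $R_n$. Each $e$-component $(R_n)_e$ is free abelian on the basis elements $(s,(x_1,\dots,x_n))$ with $ss^{-1}=e$ and $s^{-1}s\le\epsilon_{(x_1,\dots,x_n)}$, and all three maps in~\eqref{eq-contr-homotopy} are additive on components, so it suffices to check the identity on such a basis element $a=(s,(x_1,\dots,x_n))$. Throughout I use that $s^{-1}s\le\epsilon_{(x_1,\dots,x_n)}$ yields $s\,\epsilon_{(x_1,\dots,x_n)}=s$, and the analogous inequality $ss^{-1}\le\epsilon_{(\eta(s),x_1,\dots,x_n)}$ established just before the lemma. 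The case $n=0$ reduces at once via Lemma~\ref{lem-properties_of_eta}(ii): $\partial_1\sigma_0(s,())=(s,())-(ss^{-1},())$ and $\sigma_{-1}\partial_0(s,())=(ss^{-1},())$, whose sum is $(s,())$.

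For $n\ge 1$, I first compute $\partial_{n+1}\sigma_n(a)$. Writing $\sigma_n(a)=\lambda^{R_{n+1}}_{ss^{-1}}\iota_{n+1}(\eta(s),x_1,\dots,x_n)$ and using $S$-linearity of $\partial_{n+1}$, I apply Definition~\ref{defn-partial_n} to $(\eta(s),x_1,\dots,x_n)$ and multiply each leading coordinate by $ss^{-1}$. The inequality $ss^{-1}\le\epsilon_{(\eta(s),x_1,\dots,x_n)}$ absorbs the middle and trailing idempotent factors, while Lemma~\ref{lem-properties_of_eta}(ii) combined with $s\,\epsilon_{(x_1,\dots,x_n)}=s$ collapses the leading term to $a$ itself. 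The outcome is
\[
\partial_{n+1}\sigma_n(a)=a+\sum_{i=1}^{n}(-1)^i(ss^{-1},M_i)+(-1)^{n+1}(ss^{-1},(\eta(s),x_1,\dots,x_{n-1})),
\]
where $M_1=(\eta(s)x_1,x_2,\dots,x_n)$ and $M_i=(\eta(s),x_1,\dots,x_{i-1}x_i,\dots,x_n)$ for $2\le i\le n$.

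Next I compute $\sigma_{n-1}\partial_n(a)$. By $S$-linearity of $\partial_n$ and $s\,\epsilon_{(x_1,\dots,x_n)}=s$, one has
\[
\lambda^{R_{n-1}}_s\partial_n(x_1,\dots,x_n)=(s\G(x_1)\epsilon_{(x_2,\dots,x_n)},(x_2,\dots,x_n))+\sum_{i=1}^{n-1}(-1)^i(s,(x_1,\dots,x_ix_{i+1},\dots,x_n))+(-1)^n(s,(x_1,\dots,x_{n-1})).
\]
Now I apply $\sigma_{n-1}$ summand by summand; note that $\sigma_{n-1}$ is not $S$-linear. For the leading summand, setting $s'=s\G(x_1)\epsilon_{(x_2,\dots,x_n)}$, iterating $\G(x)\epsilon_y=\epsilon_{xy}\G(x)$ gives $\G(x_1)\epsilon_{(x_2,\dots,x_n)}\G(x_1^{-1})=\epsilon_{(x_1,\dots,x_n)}$, whence $s's'^{-1}=s\,\epsilon_{(x_1,\dots,x_n)}s^{-1}=ss^{-1}$; and $\eta(s')=\eta(s)x_1$, because $\eta$ is a semigroup homomorphism (Corollary~\ref{cor-eta:Gamma(G)_to_G}) with $\eta(\G(x))=x$ (Lemma~\ref{lem-properties_of_eta}(i)) that kills idempotents. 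So the leading summand becomes $(ss^{-1},M_1)$, the middle summands become $(-1)^i(ss^{-1},M_{i+1})$ for $1\le i\le n-1$, and the last becomes $(-1)^n(ss^{-1},(\eta(s),x_1,\dots,x_{n-1}))$.

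Summing the two expressions, the $(ss^{-1},M_1)$ contributions (with signs $-1$ and $+1$) cancel; for $2\le j\le n$, the $(ss^{-1},M_j)$ terms (with signs $(-1)^j$ and $(-1)^{j-1}$ after reindexing $i=j-1$ in the sum for $\sigma_{n-1}\partial_n(a)$) cancel in pairs; and the two $(ss^{-1},(\eta(s),x_1,\dots,x_{n-1}))$ contributions have opposite signs $(-1)^{n+1}$ and $(-1)^n$, so they also cancel. Only $a$ remains, establishing~\eqref{eq-contr-homotopy}. The main technical obstacle is the first-coordinate computation for the leading summand of $\sigma_{n-1}\partial_n(a)$, which requires the commutation relation $\G(x)\epsilon_y=\epsilon_{xy}\G(x)$ and the fact that $\eta$ kills idempotents; the remainder is careful sign and index bookkeeping.
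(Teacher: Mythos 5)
Your proof is correct and follows essentially the same route as the paper: reduce to the free generators $(s,(x_1,\dots,x_n))$ of each component, expand $\partial_{n+1}\sigma_n$ and $\sigma_{n-1}\partial_n$ using $s\,\epsilon_{(x_1,\dots,x_n)}=s$, the inequality $ss\m\le\epsilon_{(\eta(s),x_1,\dots,x_n)}$, and the identities $\eta(s\G(x_1)\epsilon_{(x_2,\dots,x_n)})=\eta(s)x_1$ and $ss\m\G(\eta(s))=s$, and observe that all terms cancel in pairs except the leading one. The only cosmetic difference is that you factor the computation explicitly through $S$-linearity of $\partial$, while the paper applies the formulas directly to $(s,(x_1,\dots,x_n))$; the bookkeeping is identical.
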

	\begin{proof}
		Let $n=0$. It is sufficient to verify~\eqref{eq-contr-homotopy} on an arbitrary generator $(s,(\phantom{x}))$ of $(R_0)_{ss\m}.$ Using (ii) of Lemma~\ref{lem-properties_of_eta} and the inequality $ss\m\le\epsilon_{\eta(s)}$ explained above, we have:
	\begin{align*}
		(\partial_1\circ\sigma_0+\sigma_{-1}\circ\partial_0)(s,(\phantom{x}))&=
		\partial_1(ss\m,(\eta(s)))+\sigma_{-1}(\lambda_s(1_{1_S}))\\
		&=(ss\m\G(\eta(s)),(\phantom{x}))-(ss\m \epsilon_{\eta(s)},(\phantom{x}))+\sigma_{-1}(1_{ss\m})\\
		&=(s,(\phantom{x}))-(ss\m,(\phantom{x}))+(ss\m,(\phantom{x}))\\
        &=(s,(\phantom{x})).
	\end{align*}

	Let $n>0$. For a generator $(s,(x_1,\dots,x_n))$ of $(R_n)_{ss\m}$, taking into account the facts that $s\epsilon_{(x_1,\dots,x_n)}=s$, $ss\m\le \epsilon_{(\eta(s),x_1,\dots,x_n)}$ established above and (i)--(ii) of Lemma~\ref{lem-properties_of_eta}, we calculate 
	\begin{align}\label{eq-partial_(n+1)sigma_n}
		\partial_{n+1}\circ\sigma_n(s,(x_1,\dots,x_n))&=\partial_{n+1}(ss\m,(\eta(s),x_1,\dots,x_n))\notag\\
	&=(ss\m\G(\eta(s))\epsilon_{(x_1,\dots,x_n)},(x_1,\dots,x_n))\notag\\
	&-(ss\m \epsilon_{(\eta(s),x_1,\dots,x_n)},(\eta(s)x_1,x_2,\dots,x_n))\notag\\
	&+\sum_{i=1}^{n-1}(-1)^{i+1}(ss\m \epsilon_{(\eta(s),x_1,\dots,x_n)},(\eta(s),x_1,\dots,x_ix_{i+1},\dots,x_n))\notag\\
	&+(-1)^{n+1}(ss\m \epsilon_{(\eta(s),x_1,\dots,x_n)},(\eta(s),x_1,\dots,x_{n-1}))\notag\\
	&=(s,(x_1,\dots,x_n))-
	(ss\m,(\eta(s)x_1,x_2,\dots,x_n))\notag\\
	&+\sum_{i=1}^{n-1}(-1)^{i+1}(ss\m,(\eta(s),x_1,\dots,x_ix_{i+1},\dots,x_n))\notag\\
	&+(-1)^{n+1}(ss\m,(\eta(s),x_1,\dots,x_{n-1})).
	\end{align}
and
	\begin{align}\label{eq-sigma_(n-1)partial_n}
		\sigma_{n-1}\circ\partial_n(s,(x_1,\dots,x_n))&=
		\sigma_{n-1}(s\G(x_1)\epsilon_{(x_2,\dots,x_n)},(x_2,\dots,x_n))\notag\\
		&+\sum_{i=1}^{n-1}(-1)^i\sigma_{n-1}(s\epsilon_{(x_1,\dots,x_n)},(x_1,\dots,x_ix_{i+1},\dots,x_n))\notag\\
		&+(-1)^n\sigma_{n-1}(s\epsilon_{(x_1,\dots,x_n)},(x_1,\dots,x_{n-1}))\notag\\
		&=(ss\m,(\eta(s\G(x_1)\epsilon_{(x_2,\dots,x_n)}),x_2,\dots,x_n))\notag\\
		&+\sum_{i=1}^{n-1}(-1)^i(ss\m,(\eta(s),x_1,\dots,x_ix_{i+1},\dots,x_n))\notag\\
		&+(-1)^n(ss\m,(\eta(s),x_1,\dots,x_{n-1}))\notag\\
		&=(ss\m,(\eta(s)x_1,x_2,\dots,x_n))\notag\\
		&+\sum_{i=1}^{n-1}(-1)^i(ss\m,(\eta(s),x_1,\dots,x_ix_{i+1},\dots,x_n))\notag\\
		&+(-1)^n(ss\m,(\eta(s),x_1,\dots,x_{n-1})).
	\end{align}

Adding~\eqref{eq-partial_(n+1)sigma_n} and~\eqref{eq-sigma_(n-1)partial_n} we get $(s,(x_1,\dots,x_n))$, as desired.
	\end{proof}

We recall from~\cite{Lausch} that, given a morphism of (additive) strict inverse $S$-modules $\phi:(A,\lambda,\alpha)\to(A',\lambda',\alpha')$, the kernel $\ker\phi$ is the submodule of $A$ with $(\ker\phi)_e=\phi\m(\alpha'(e))$, $e\in E(S)$.

	\begin{cor}\label{cor-ker_partial_n_subset_im_partial_(n-1)}
		Under the conditions of Lemma~\ref{lem-Hom_S(R_n,A)=C^n(G,theta,A)} we have $\ker{\partial_n}\subseteq\im{\partial_{n+1}}$ for all $n\ge 0$.
	\end{cor}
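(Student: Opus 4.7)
The claim is the standard exactness conclusion drawn from a contracting homotopy, and Lemma~\ref{lem-sigma_n_is_contr_homotopy} already gives exactly such a homotopy. So the plan is to deduce the corollary formally from the identity $\partial_{n+1}\circ\sigma_n+\sigma_{n-1}\circ\partial_n=\id_{R_n}$, $n\ge 0$, with no further computation.

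Fix $n\ge 0$, an idempotent $e\in E(S)$, and an element $x\in(\ker\partial_n)_e$. By the definition of kernel in $\Mod{S}$ recalled just before the corollary, $\partial_n(x)=\alpha^{R_{n-1}}(e)$; that is, $\partial_n(x)$ is the zero of the $e$-component of $R_{n-1}$ (for $n=0$ replace $R_{n-1}$ by $\bbZ_S$). Since $\sigma_{n-1}$ is a morphism of $\pi$-strict $E(\cS G)$-sets, it maps the $e$-component to the $e$-component and, being a homomorphism of abelian groups there, sends zero to zero. Hence $\sigma_{n-1}(\partial_n(x))=\alpha^{R_n}(e)$, which is the identity of the $e$-component of $R_n$.

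Applying Lemma~\ref{lem-sigma_n_is_contr_homotopy} (written multiplicatively in each component, as elsewhere in the text) we obtain
\[
    x=\id_{R_n}(x)=\partial_{n+1}(\sigma_n(x))\cdot\sigma_{n-1}(\partial_n(x))=\partial_{n+1}(\sigma_n(x))\cdot\alpha^{R_n}(e)=\partial_{n+1}(\sigma_n(x)),
\]
so $x\in\im\partial_{n+1}$, as desired.

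The only subtle point is making sure that the contracting homotopy formula is being applied to an element that genuinely lies in $(R_n)_e$ and that the term $\sigma_{n-1}(\partial_n(x))$ is really the identity of the correct component; both are immediate from the preservation of components under morphisms of $\pi$-strict $E(\cS G)$-sets. No new calculation is required beyond what Lemma~\ref{lem-sigma_n_is_contr_homotopy} already provides, so there is essentially no obstacle; the corollary is a direct formal consequence.
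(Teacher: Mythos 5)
Your proof is correct and follows essentially the same route as the paper's: both deduce the inclusion formally from the contracting homotopy of Lemma~\ref{lem-sigma_n_is_contr_homotopy}, using that $\sigma_{n-1}$ restricted to a component is a group homomorphism (hence kills the zero) and that $\partial_{n+1}\circ\sigma_n$ preserves components, so absorbing $\alpha^{R_n}(e)$ is harmless. The only difference is cosmetic: the paper writes out the case $n=0$ and says the rest is similar, while you treat all $n\ge 0$ uniformly.
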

	\noindent Indeed, if $\partial_0(r)=0_e$ for some $r\in(R_0)_e$, $e\in E(S),$ then by Lemma~\ref{lem-sigma_n_is_contr_homotopy} with $n=0$ and the fact that $\sigma_{-1}|_{(\bbZ_S)_e}$ is a homomorphism of abelian groups $(\bbZ_S)_e\to (R_0)_e$, we see that $r=\partial_1(\sigma_0(r))+\alpha^{R_0}(e)$. Since $\partial_1\circ\sigma_0$ is a morphism of $\pi$-strict $E(\cS G)$-sets (see Remark~\ref{rem-pi-str_mod_is_pi-str_set} and Definition~\ref{defn-sigma_n}), $\partial_1(\sigma_0(r))\in(R_0)_e$, so $\partial_1(\sigma_0(r))+\alpha^{R_0}(e)=\partial_1(\sigma_0(r))$, whence $r=\partial_1(\sigma_0(r))$. The inclusion for $n>0$ can be similarly obtained from Lemma~\ref{lem-sigma_n_is_contr_homotopy}. 

	\begin{thrm}\label{thrm-R_n_if_free_res_of_Z_S}
		Under the conditions of Lemma~\ref{lem-Hom_S(R_n,A)=C^n(G,theta,A)} the sequence
	\[
		\dots\overset{\partial_{n+1}}{\to}R_n\overset{\partial_n}{\to}\dots
\overset{\partial_2}{\to}R_1\overset{\partial_1}{\to}R_0\overset{\partial_0}{\to}\bbZ_S\to 0,
	\]
where $0$ is the zero of the $\pi$-strict component of $\InvESMod{\cS G}$, is a free resolution of $\bbZ_S$ whose cohomology groups with values in $A$ are isomorphic to $H^n(G,A)$.
	\end{thrm}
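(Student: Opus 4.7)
The theorem decomposes into two parts: first, that $\dots\to R_1\to R_0\to\bbZ_S\to 0$ is a free resolution in the $\pi$-component of $\InvESMod{\cS G}$; second, that applying $\Hom_\pi(-,A)$ to its truncation recovers the complex computing $H^n(G,A)$ in the sense of Definition \ref{defn-cohomology}. My plan is to handle each part by assembling the lemmas and corollaries that were built up precisely for this purpose.

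For the resolution property, freeness of each $R_n$ is exactly Proposition \ref{prop-free_epi-strict_S-mod} applied to the base $V_n$ of Definition \ref{defn-free_res_of_Z_S}, while membership in the $\pi$-component is Remark \ref{rem-F(T,pi)_belongs_to_pi-comp}. The complex condition $\partial_n\circ\partial_{n+1}=0$ for $n\ge 1$ is Corollary \ref{cor-partial^2=0}; the remaining identity $\partial_0\circ\partial_1=0$ is a one-line check using $\G(x)\G(x)\m=\epsilon_x$ together with the observation that $\partial_0$, as the $\cS G$-module map determined by $(\phantom{x})\mapsto 1_{1_S}$, sends $(s,(\phantom{x}))\mapsto 1_{ss\m}$. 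Exactness at each $R_n$ ($n\ge 0$) is then delivered by the contracting homotopy $\{\sigma_n\}_{n\ge -1}$ of Lemma \ref{lem-sigma_n_is_contr_homotopy}: any $r\in\ker\partial_n$ satisfies $r=\partial_{n+1}(\sigma_n(r))$, which is the content of Corollary \ref{cor-ker_partial_n_subset_im_partial_(n-1)}. Finally, $\partial_0$ is surjective because its image contains every $1_e$ with $e\in E(S)$, and these generate $\bbZ_S$ component-by-component.

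The cohomological identification is then almost automatic: Lemma \ref{lem-Hom_S(R_n,A)=C^n(G,theta,A)} supplies natural isomorphisms $\Hom_\pi(R_n,A)\cong C^n(G,A)$, and Lemma \ref{lem-delta^nf=fpartial_n} shows that under these isomorphisms the induced transpose $f\mapsto f\circ\partial_{n+1}$ coincides with the partial coboundary $\delta^n$ of Definition \ref{defn-coboundary_hom}. Consequently the cohomology of the complex $\Hom_\pi(R_\bullet,A)$ at degree $n$ is $\ker\delta^n/\im\delta^{n-1}=H^n(G,A)$.

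The main conceptual step, and the one into which most of the technical effort has already been invested, is the compatibility of the contracting homotopy $\sigma_n$ with the $\pi$-strict $E(\cS G)$-set structure, verified in Lemma \ref{lem-sigma_n_is_contr_homotopy}. With that in hand, together with the two $\Hom$-level identifications above, the proof of the theorem is an assembly of previously established ingredients, and I anticipate no essentially new difficulty beyond bookkeeping of the boundary cases $n=0$ and $n=-1$.
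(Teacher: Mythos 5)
Your proposal is correct and follows essentially the same route as the paper: the authors likewise reduce the theorem to Lemmas~\ref{lem-Hom_S(R_n,A)=C^n(G,theta,A)} and~\ref{lem-delta^nf=fpartial_n} together with Corollaries~\ref{cor-partial^2=0} and~\ref{cor-ker_partial_n_subset_im_partial_(n-1)}, then verify $\im\partial_1\subseteq\ker\partial_0$ by the same direct computation $1_{\G(x)\G(x)\m}-1_{\epsilon_x}=0_{\epsilon_x}$ and surjectivity of $\partial_0$ via $n_e=\partial_0(n(e,(\phantom{x})))$. Your bookkeeping of the boundary cases matches theirs exactly.
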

	\begin{proof}
		In view of Lemmas~\ref{lem-Hom_S(R_n,A)=C^n(G,theta,A)},~\ref{lem-delta^nf=fpartial_n} and Corollaries~\ref{cor-partial^2=0},~\ref{cor-ker_partial_n_subset_im_partial_(n-1)} it only remains to prove the inclusion $\im{\partial_1}\subseteq\ker\partial_0$ (the exactness in $\bbZ_S$ is obvious, because $n_e=\partial_0(n(e,(\phantom{x})))$ for all $n\in\bbZ $ and $e\in E(S)$). This can be done by direct computation of $\partial_0\circ\partial_1$. For any $x\in G$ the image $\partial_0(\partial_1(x))$ is 
	\begin{align*}
		\partial_0(\G(x),(\phantom{x}))-\partial_0(\epsilon_x,(\phantom{x}))
=\lambda_{\G(x)}(1_{1_S})-\lambda_{\epsilon_x}(1_{1_S})
=1_{\G(x)\G(x)\m}-1_{\epsilon_x}=0_{\epsilon_x}.
	\end{align*}
	\end{proof}

	\begin{cor}\label{cor-H^n(G,theta,A)_isom_H^n_S(A)}
		Under the conditions of Lemma~\ref{lem-Hom_S(R_n,A)=C^n(G,theta,A)} we have $H^n(G,A)\cong H^n(\cS G,A)$ for all $n\ge 0$.
	\end{cor}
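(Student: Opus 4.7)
The plan is to identify the two cohomology groups by exhibiting the same object computing both of them. By Definition~\ref{defn-H^n(S,A,pi)}, $H^n(\cS G,A)$ equals $H^n_S(A)=\Ext^n_S(\bbZ_S,A)$, computed in the abelian category of strict inverse $S$-modules. By Remark~\ref{rem-pi-comp_is_isom_to_StrMod(S')}, the $\pi$-component of $\InvESMod{\cS G}$ is isomorphic to this category, and under this identification the modules $R_n=R_n(\G)$ and $\bbZ_S$ (together with the differentials $\partial_n$) all live in the $\pi$-component.

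First I would observe that, by Remark~\ref{rem-F(T,pi)_belongs_to_pi-comp}, each free module $R_n=F(V_n)$ lies in the $\pi$-component and is projective there (since by Lausch's Corollary~3.2 free strict inverse $S'$-modules are projective). Next I would invoke Theorem~\ref{thrm-R_n_if_free_res_of_Z_S}, which establishes that
\[
    \dots\overset{\partial_{n+1}}{\to} R_n \overset{\partial_n}{\to}\dots \overset{\partial_1}{\to} R_0\overset{\partial_0}{\to}\bbZ_S\to 0
\]
is an exact sequence, hence a projective resolution of $\bbZ_S$ in the $\pi$-component (equivalently, in the category of strict inverse $S'$-modules).

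Then I would apply $\Hom_\pi(-,A)$ to the truncated resolution and compute cohomology. On one hand, by standard homological algebra (using any projective resolution to compute $\Ext$), the resulting cohomology groups are isomorphic to $\Ext^n_{S'}(\bbZ_{S'},A)=H^n_{S'}(A)=H^n(\cS G,A)$. On the other hand, Theorem~\ref{thrm-R_n_if_free_res_of_Z_S} explicitly identifies these same cohomology groups with $H^n(G,A)$, via the isomorphism $\Hom_\pi(R_n,A)\cong C^n(G,A)$ of Lemma~\ref{lem-Hom_S(R_n,A)=C^n(G,theta,A)} and the compatibility $\delta^n f = f\circ\partial_{n+1}$ of Lemma~\ref{lem-delta^nf=fpartial_n}. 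Comparing both descriptions yields $H^n(G,A)\cong H^n(\cS G,A)$.

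The only genuine point to verify carefully is that the notions of projectivity and of $\Hom$ match across the two descriptions, i.e., that $\Hom_\pi(-,A)$ agrees with $\Hom_{S'}(-,A)$ in the strict inverse $S'$-module category; but this is immediate from Remark~\ref{rem-pi-comp_is_isom_to_StrMod(S')}, which asserts that the two categories are isomorphic. Everything else is a direct appeal to results already proved in the preceding sections, so no nontrivial obstacle remains.
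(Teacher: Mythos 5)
Your proposal is correct and is essentially the argument the paper intends: the corollary is stated as an immediate consequence of Theorem~\ref{thrm-R_n_if_free_res_of_Z_S}, since the $R_n$ are free (hence projective) in the $\pi$-component, so the resolution computes $\Ext^n_{S'}(\bbZ_{S'},A)=H^n(\cS G,A)$ on one hand and, via Lemmas~\ref{lem-Hom_S(R_n,A)=C^n(G,theta,A)} and~\ref{lem-delta^nf=fpartial_n}, the complex $C^\bullet(G,A)$ on the other. No gaps.
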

	
	\section{Some remarks on max-generated $F$-inverse monoids}\label{sec-max-generated}
	
	In this section we  specify the class of inverse semigroups $S$ admitting an epimorphism $\pi:\cS G\to S$ with $\ker\pi\subseteq\sigma$. Recall from~\cite[p. 202]{Lawson} that a semigroup is said to be {\it $F$-inverse} if each its $\sigma$-class has a unique maximum element. By~\cite[Proposition 7.1.3]{Lawson} any $F$-inverse semigroup is an $E$-unitary inverse monoid. An $F$-inverse monoid $S$ is called {\it max-generated}~\cite[p. 196]{Lawson2002} if the maximum elements of its $\sigma$-classes generate $S$. For example, $\cS G$ is a max-generated $F$-inverse monoid, as $\cS G=\langle[x]\mid x\in G\rangle$.
	
	It is well-known (see~\cite[p. 63]{Lawson}) that any homomorphism $\varphi:S\to T$ of inverse semigroups induces the homomorphism of groups $\tilde\varphi:S/\sigma_S\to T/\sigma_T$ by means of $\tilde\varphi(\sigma_S(s))=\sigma_T(\varphi(s))$, because
\begin{align}\label{eq-morphism-of-group-images}
 \varphi(\sigma_S(s))\subseteq\sigma_T(\varphi(s)). 
\end{align}
Moreover, in the case we are interested in $\tilde\varphi$ becomes an isomorphism, and~\eqref{eq-morphism-of-group-images} becomes an equality, as the next lemma shows.
	
\begin{lem}\label{lem-image-of-sigma-class}
 For an epimorphism $\pi:S\to T$ of inverse semigroups the following are equivalent:
 \begin{enumerate}
  \item $\ker\pi\subseteq\sigma_S$;
  \item $\tilde\pi:S/\sigma_S\to T/\sigma_T$ is an isomorphism.
 \end{enumerate}
 Moreover, in this case $\pi(\sigma_S(s))=\sigma_T(\pi(s))$ for all $s\in S$.
\end{lem}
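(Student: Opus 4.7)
The plan is to reduce both directions to the minimum-group-congruence description of $\sigma$ recorded in Lemma~\ref{lem-sigma_is_min_gr_congr}, and then derive the ``moreover'' clause from the injectivity of $\tilde\pi$ granted by (2). The first observation is that $\tilde\pi$ is automatically a surjective group homomorphism, since $\pi$ is surjective, so being an isomorphism is equivalent to injectivity, which unpacks to the implication
\[
(\pi(s),\pi(t))\in\sigma_T\;\Longrightarrow\;(s,t)\in\sigma_S.
\]
In this form (2) $\Rightarrow$ (1) is immediate: if $\pi(s)=\pi(t)$ then $(\pi(s),\pi(t))\in\sigma_T$ trivially, so $(s,t)\in\sigma_S$ and hence $\ker\pi\subseteq\sigma_S$.

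For (1) $\Rightarrow$ (2), I would assume $\ker\pi\subseteq\sigma_S$ and suppose $(\pi(s),\pi(t))\in\sigma_T$. By Lemma~\ref{lem-sigma_is_min_gr_congr} applied inside $T$ there is some $u\in T$ with $u\le\pi(s),\pi(t)$, so $u=e\pi(s)=f\pi(t)$ for some $e,f\in E(T)$. Using the remark recorded just before Proposition~\ref{prop-from_part_act_to_Lausch} that $\pi(E(S))=E(T)$, these idempotents lift to $e',f'\in E(S)$, giving $\pi(e's)=\pi(f't)$ and hence $(e's,f't)\in\ker\pi\subseteq\sigma_S$. A second application of Lemma~\ref{lem-sigma_is_min_gr_congr}, this time inside $S$, yields $v\in S$ with $v\le e's,f't$; since $e's\le s$ and $f't\le t$, transitivity of the natural partial order gives $v\le s,t$, and a final appeal to Lemma~\ref{lem-sigma_is_min_gr_congr} delivers $(s,t)\in\sigma_S$.

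For the ``moreover'' clause, assume (2). The inclusion $\pi(\sigma_S(s))\subseteq\sigma_T(\pi(s))$ is just~\eqref{eq-morphism-of-group-images}. Conversely, any element of $\sigma_T(\pi(s))$ can be written, by surjectivity of $\pi$, as $\pi(s')$ for some $s'\in S$ with $\sigma_T(\pi(s'))=\sigma_T(\pi(s))$, i.e.\ $\tilde\pi(\sigma_S(s'))=\tilde\pi(\sigma_S(s))$; the injectivity of $\tilde\pi$ provided by (2) forces $\sigma_S(s')=\sigma_S(s)$, whence $\pi(s')\in\pi(\sigma_S(s))$.

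The only technically delicate step is the idempotent-lifting argument inside (1) $\Rightarrow$ (2), where the ingredient $\pi(E(S))=E(T)$ and the repeated use of Lemma~\ref{lem-sigma_is_min_gr_congr} have to be carefully combined; everything else is essentially a direct chase through the definitions.
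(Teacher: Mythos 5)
Your proof is correct and follows essentially the same route as the paper: reduce (2) to injectivity of $\tilde\pi$, lift the idempotent witnessing $\sigma_T$-equivalence through the epimorphism using $\pi(E(S))=E(T)$, apply $\ker\pi\subseteq\sigma_S$, and climb back up the natural partial order; the ``moreover'' clause is handled identically. The only cosmetic difference is that the paper lifts a single idempotent $f$ with $f\pi(s)=f\pi(s')$ rather than two, saving one invocation of Lemma~\ref{lem-sigma_is_min_gr_congr}.
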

\begin{proof}
 Clearly, $\tilde\pi$ is surjective, as $\pi$ is an epimorphism. So, we need to prove that the inclusion (i) is equivalent to the injectivity of $\tilde\pi$. 
 
 Let $\ker\pi\subseteq\sigma_S$. Suppose that $\tilde\pi(\sigma_S(s))=\tilde\pi(\sigma_S(s'))$ for some $s,s'\in S$. By the definition of $\tilde\pi$ this means that $(\pi(s),\pi(s'))\in\sigma_T$. Therefore, $f\pi(s)=f\pi(s')$ for some $f\in E(T)$. But $f=\pi(e)$ for $e\in E(S)$, since $\pi$ is an epimorphism. Hence, $\pi(es)=\pi(es')$, i.\,e. $(es,es')\in\ker\pi$. Using (i), we conclude that $(es,es')\in\sigma_S$, and thus $(s,s')\in\sigma_S$, as $es\le s$ and $es'\le s$. 
 
 Conversely, assume that $\tilde\pi$ is injective. If $\pi(s)=\pi(s')$, then obviously $\tilde\pi(\sigma_S(s))=\tilde\pi(\sigma_S(s'))$. By injectivity of $\tilde\pi$ we have $(s,s')\in\sigma_S$.
 
 As to $\pi(\sigma_S(s))=\sigma_T(\pi(s))$, we need only establish the inclusion $\sigma_T(\pi(s))\subseteq\pi(\sigma_S(s))$. Let $t\in\sigma_T(\pi(s))$. Since $\pi$ is surjective, $t=\pi(s')$ for some $s'\in S$. So, $(\pi(s),\pi(s'))\in\sigma_T$, which implies $(s,s')\in\sigma_S$ by injectivity of $\tilde\pi$, as above. Thus, $t\in\pi(\sigma_S(s))$.
\end{proof}

\begin{cor}\label{cor-standard-image-of-max-generated}
 Assuming (i) of Lemma~\ref{lem-image-of-sigma-class}, if $S$ is $F$-inverse, then $T$ is $F$-inverse. Moreover, if $S$ is max-generated, then $T$ is max-generated.
\end{cor}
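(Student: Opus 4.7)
The whole corollary hinges on the conclusion of Lemma~\ref{lem-image-of-sigma-class}, namely the equality $\pi(\sigma_S(s))=\sigma_T(\pi(s))$ for every $s\in S$. My plan is first to upgrade this set-theoretic equality to a statement about the order structure of the $\sigma_T$-classes, and then to deduce both assertions of the corollary.

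The key preliminary observation is that $\pi$ is order-preserving: if $s'\le s$ in $S$, then $s'=es$ for some $e\in E(S)$, whence $\pi(s')=\pi(e)\pi(s)$ with $\pi(e)\in E(T)$, so $\pi(s')\le\pi(s)$ in $T$. (I also need that $\pi(1_S)$ is the identity of $T$, which follows from $\pi(1_S)\pi(s')=\pi(s')=\pi(s')\pi(1_S)$ together with surjectivity, so $T$ is a monoid as required by the $F$-inverse definition.)

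For the first assertion, fix a $\sigma_T$-class; writing it as $\sigma_T(\pi(s))=\pi(\sigma_S(s))$ via Lemma~\ref{lem-image-of-sigma-class}, let $m_s$ be the (unique) maximum of $\sigma_S(s)$, which exists because $S$ is $F$-inverse. Any element of $\pi(\sigma_S(s))$ has the form $\pi(s')$ with $s'\le m_s$, and by the order-preservation above $\pi(s')\le\pi(m_s)$. Since $\pi(m_s)$ itself lies in $\pi(\sigma_S(s))$, it is a maximum of the class; uniqueness is automatic from antisymmetry of the natural partial order on $T$. Hence every $\sigma_T$-class has a unique maximum, so $T$ is $F$-inverse.

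For the second assertion, let $M\subseteq S$ denote the set of maxima of all $\sigma_S$-classes; by assumption $S=\langle M\rangle$. The argument just given shows that the set of maxima of $\sigma_T$-classes is exactly $\pi(M)$, because every $\sigma_T$-class is of the form $\pi(\sigma_S(s))$ with maximum $\pi(m_s)$, and conversely each $\pi(m)$ for $m\in M$ is the maximum of $\sigma_T(\pi(m))$. Hence $T=\pi(S)=\pi(\langle M\rangle)=\langle\pi(M)\rangle$, so $T$ is max-generated. The main subtlety — and really the only non-formal point — is ensuring that $\pi(m_s)$ is indeed the \emph{maximum}, not merely a maximal, element, which is why the order-preservation step and the identification $\sigma_T(\pi(s))=\pi(\sigma_S(s))$ from Lemma~\ref{lem-image-of-sigma-class} must be invoked in that precise combination; everything else is formal.
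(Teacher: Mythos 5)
Your argument is correct and is essentially the paper's own proof: both rest on the order-preservation of $\pi$ together with the identity $\pi(\sigma_S(s))=\sigma_T(\pi(s))$ from Lemma~\ref{lem-image-of-sigma-class}, concluding that $\pi(\max\sigma_S(s))$ is the maximum of $\sigma_T(\pi(s))$ and that these images generate $T=\pi(S)$. You merely spell out a few routine points (antisymmetry for uniqueness, $\pi(1_S)$ being the identity of $T$) that the paper leaves implicit.
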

\noindent Indeed, $\pi$, being a homomorphism, agrees with the natural partial order. So, if the class $\sigma_S(s)$ has a unique maximum $\max\sigma_S(s)$, then $\pi(\sigma_S(s))$ has a unique maximum $\pi(\max\sigma_S(s))$. But since $\pi$ is surjective, any $\sigma_T$-class has the form $\sigma_T(\pi(s))=\pi(\sigma_S(s))$ for some $s\in S$. Now if the elements $\max\sigma_S(s)$, $s\in S$, generate $S$, then their images $\pi(\max\sigma_S(s))=\max\pi(\sigma_S(s))$ generate $T$.
	 
\begin{lem}\label{lem-max-generated-image-of-S(G)}
 Let $S$ be a max-generated $F$-inverse monoid. Then there is a (unique up to an isomorphism) group $G$ with a (unique) epimorphism $\pi:\cS G\to S$, such that $\ker\pi\subseteq\sigma_{\cS G}$.
\end{lem}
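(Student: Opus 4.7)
The plan is to take $G:=S/\sigma_S$ and to define $\pi$ by selecting, in each $\sigma_S$-class, its unique maximum element. For $x\in G$, let $m_x\in S$ denote the maximum of the $\sigma_S$-class corresponding to $x$. I would first record two elementary properties: $m_{1_G}=1_S$ (any element $\sigma_S$-related to $1_S$ is idempotent by $E$-unitarity, hence lies below $1_S$), and $m_{x\m}=m_x\m$ (inversion is order-preserving, so $m_x\m$ lies in and is the maximum of the $\sigma_S$-class $x\m$).

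The main step is to extend the assignment $[x]\mapsto m_x$ to a homomorphism $\pi:\cS G\to S$ by verifying the defining relations of $\cS G$. Relation (iii) is immediate from $m_{1_G}=1_S$. For (i) one has to show $m_{x\m}m_xm_y=m_{x\m}m_{xy}$, which follows from two inequalities: $m_xm_y\le m_{xy}$ and $m_{x\m}m_{xy}\le m_y$, each holding because both sides lie in the same $\sigma_S$-class and the right-hand side is its maximum. Left-multiplying these by $m_{x\m}$ and $m_x$ respectively, and then once more by $m_{x\m}$, together with the identity $m_{x\m}m_xm_{x\m}=m_{x\m}$, yields both the required inequality and its reverse, giving (i); (ii) is dual. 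Max-generation of $S$ makes $\pi$ surjective.

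To verify $\ker\pi\subseteq\sigma_{\cS G}$ I would apply Lemma~\ref{lem-image-of-sigma-class}: it suffices that the induced homomorphism $\tilde\pi:\cS G/\sigma_{\cS G}\to S/\sigma_S$ be an isomorphism. Under the canonical identifications $\cS G/\sigma_{\cS G}\cong G\cong S/\sigma_S$, the map $\tilde\pi$ carries $[x]\sigma$ to $\sigma_S(m_x)=x$, which is the identity. The uniqueness of $G$ up to isomorphism is then forced by the same lemma.

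For the uniqueness of $\pi$ (once $G$ is identified with $S/\sigma_S$ via $\tilde\pi$), suppose $\pi':\cS G\to S$ is another such epimorphism. Then $\pi'([x])$ lies in the $\sigma_S$-class $x$, whence $\pi'([x])\le m_x$. The opposite inequality---the step I expect to be the main subtlety---requires the surjectivity of $\pi'$: for each $x\in G$ pick $w=\e_{y_1}\dots\e_{y_k}[x']\in\cS G$ with $\pi'(w)=m_x$; writing $\pi'(w)$ as the product of the idempotent $\pi'(\e_{y_1})\cdots\pi'(\e_{y_k})$ with $\pi'([x'])$, the fact that left-multiplication by an idempotent decreases elements in the natural order yields $m_x\le\pi'([x'])$, while matching $\sigma_S$-classes via $\tilde{\pi'}$ forces $x'=x$. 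Hence $\pi'([x])=m_x$ for every $x$, so $\pi'$ agrees with $\pi$ on the generators of $\cS G$, and therefore $\pi'=\pi$.
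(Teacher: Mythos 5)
Your proof is correct, and it follows the same overall skeleton as the paper's: take $G=S/\sigma_S$, send $[x]$ to the maximum $m_x$ of the $\sigma_S$-class $x$, get surjectivity from max-generation, and deduce $\ker\pi\subseteq\sigma_{\cS G}$ together with the uniqueness of $G$ from Lemma~\ref{lem-image-of-sigma-class}. The difference lies in how the central existence step is handled. The paper simply invokes Szendrei's Corollary~3 to obtain the (unique) homomorphism $\pi:\cS G\to S$ with $\pi([x])=\max x$ and $\sigma_S^\natural\circ\pi=\sigma_{\cS G}^\natural$, whereas you verify by hand that $x\mapsto m_x$ respects the defining relations of $\cS G$: your two inequalities $m_xm_y\le m_{xy}$ and $m_{x\m}m_{xy}\le m_y$, combined via $m_{x\m}m_xm_{x\m}=m_{x\m}$ and the compatibility of the natural order with multiplication, do yield relation (i), and the dual computation gives (ii); you likewise prove the uniqueness of $\pi$ directly from surjectivity and the normal form $\e_{y_1}\dots\e_{y_k}[x']$. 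This buys a self-contained argument at the cost of a page of elementary order computations. The only point worth flagging is the one you flag yourself: ``uniqueness of $\pi$'' is only meaningful once the identification of $G$ with $S/\sigma_S$ is fixed (otherwise one may precompose $\pi$ with the automorphism of $\cS G$ induced by any automorphism of $G$), which is also how the appeal to Szendrei in the paper must be read.
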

\begin{proof}
 Setting $G=S/\sigma_S$, by~\cite[Corollary 3]{Szendrei89} we find a (unique) homomorphism $\pi:\cS G\to S$, such that $\pi([x])=\max x$ for any $x\in G$ and 
 \begin{align}\label{eq-szendrei}
  \sigma_S^\natural\circ\pi=\sigma_{\cS G}^\natural:\cS G\to G.
 \end{align}
 Since $S$ is max-generated, it follows that $\pi$ is an epimorphism. Moreover,~\eqref{eq-szendrei} yields that $\ker\pi\subseteq\sigma_{\cS G}$. The uniqueness of $G$ (up to an isomorphism) follows from (ii) of Lemma~\ref{lem-image-of-sigma-class}.
\end{proof}
	 
As a consequence of Corollary~\ref{cor-standard-image-of-max-generated} and Lemma~\ref{lem-max-generated-image-of-S(G)} we obtain the next.
\begin{thrm}\label{thrm-max-generated}
 For an inverse semigroup $S$ the following are equivalent:
 \begin{enumerate}
  \item there exists a (unique up to an isomorphism) group $G$ admitting a (unique) epimorphism $\pi:\cS G\to S$ with $\ker\pi\subseteq\sigma_{\cS G}$;
  \item $S$ is a max-generated $F$-inverse monoid.
 \end{enumerate}
\end{thrm}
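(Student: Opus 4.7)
The proof is essentially an assembly of the two results immediately preceding the theorem, together with a short argument for the uniqueness claims. The plan has three short movements.

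First, for the direction (ii)$\Rightarrow$(i): this is exactly the content of Lemma~\ref{lem-max-generated-image-of-S(G)}. If $S$ is a max-generated $F$-inverse monoid, setting $G=S/\sigma_S$ and invoking Szendrei's theorem as in that lemma produces an epimorphism $\pi:\cS G\to S$ satisfying $\sigma_S^\natural\circ\pi=\sigma_{\cS G}^\natural$, which forces $\ker\pi\subseteq\sigma_{\cS G}$.

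Second, for (i)$\Rightarrow$(ii): assume an epimorphism $\pi:\cS G\to S$ with $\ker\pi\subseteq\sigma_{\cS G}$. Since $\cS G=\langle[x]\mid x\in G\rangle$ is itself a max-generated $F$-inverse monoid (each $[x]$ being the maximum of its $\sigma_{\cS G}$-class), Corollary~\ref{cor-standard-image-of-max-generated} applied to $\pi$ transfers both properties to $S$.

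Third, I would handle the uniqueness assertions. The group $G$ is unique up to isomorphism because Lemma~\ref{lem-image-of-sigma-class}(ii) provides an isomorphism $\tilde\pi:\cS G/\sigma_{\cS G}\to S/\sigma_S$, and $\cS G/\sigma_{\cS G}\cong G$ canonically, so $G\cong S/\sigma_S$. For uniqueness of $\pi$ (with $G$ identified as $S/\sigma_S$), the argument is the following: for any such $\pi$, Lemma~\ref{lem-image-of-sigma-class} gives $\pi(\sigma_{\cS G}([x]))=\sigma_S(\pi([x]))$; since $[x]$ is the maximum of $\sigma_{\cS G}([x])$ and $\pi$ preserves the natural partial order, $\pi([x])$ dominates every element of $\sigma_S(\pi([x]))$ and hence is its maximum, which is unique because $S$ is $F$-inverse. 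As the generators $\{[x]\mid x\in G\}$ determine a homomorphism out of $\cS G$, the epimorphism $\pi$ is thereby uniquely specified.

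There is essentially no technical obstacle here; all substantive work (the application of Szendrei's theorem and the behavior of $\sigma$-classes under epimorphisms) was carried out in the preceding lemmas. The only point requiring genuine care is making the uniqueness of $\pi$ precise, since a priori one might hope to send $[x]$ to any element of its $\sigma_S$-class; the resolution is that the order-preservation built into being a semigroup homomorphism, combined with $F$-inverseness of $S$, pins $\pi([x])$ down to the maximum.
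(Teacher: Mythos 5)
Your proposal is correct and follows essentially the same route as the paper, which derives the theorem directly as a consequence of Corollary~\ref{cor-standard-image-of-max-generated} (for (i)$\Rightarrow$(ii), using that $\cS G$ is itself max-generated $F$-inverse) and Lemma~\ref{lem-max-generated-image-of-S(G)} (for (ii)$\Rightarrow$(i) and the uniqueness claims, via Szendrei's result and Lemma~\ref{lem-image-of-sigma-class}). Your explicit argument that any epimorphism with $\ker\pi\subseteq\sigma_{\cS G}$ must send $[x]$ to the maximum of its $\sigma_S$-class is a welcome spelling-out of the uniqueness of $\pi$, but it does not change the structure of the proof.
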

	 It follows that the cohomology of a max-generated $F$-inverse monoid can be seen as the partial group cohomolgy.
	 \begin{cor}\label{cor-H^n_S(A)-as-H^n(G,A)}
	  Let $S$ be a max-generated $F$-inverse monoid and $A$ a strict inverse $S$-module. Then there are a group $G$ (one may take $G=S/\sigma$) and a structure of an inverse partial $G$-module on $A$ such that $H^n_S(A)\cong H^n(G,A)$ for all $n\ge 0$.
	 \end{cor}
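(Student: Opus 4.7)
My plan is to stitch together results already established in the paper, with the key bridging tool being Theorem~\ref{thrm-max-generated}.

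First, since $S$ is a max-generated $F$-inverse monoid, Theorem~\ref{thrm-max-generated} supplies (taking $G=S/\sigma$) a unique epimorphism $\pi:\cS G\to S$ with $\ker\pi\subseteq\sigma_{\cS G}$. I would use this $\pi$ to pull the strict inverse $S$-module $A=(A,\lambda,\alpha)$ back to a $\pi$-strict inverse $\cS G$-module, i.e.\ an object of the $\pi$-component of $\InvESMod{\cS G}$; strictness of $A$ over $S$ together with the surjectivity of $\pi$ on idempotents ensures the resulting pair lies in $\InvESMod{\cS G}$. Since $\cS G$ is $E$-unitary, Proposition~\ref{prop-desc_of_ESMod^*(S)} then guarantees that this $\pi$-strict $\cS G$-module is standard.

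Next, I would invoke the converse half of Corollary~\ref{cor-part_mod_as_epi-strict_S(G)-mod}: from the epimorphism $\pi:\cS G\to S$ together with the strict inverse $S$-module $A$, one obtains an inverse partial $G$-module structure $(A,\theta)$ on $A$ via the formulas $\alpha(\pi(\varepsilon_x))=1_x$ and $\lambda_{\pi([x])}(a)=\theta_x(1_{x^{-1}}a)$. This is precisely the structure required by the statement.

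At this point the hypotheses of Lemma~\ref{lem-Hom_S(R_n,A)=C^n(G,theta,A)} are satisfied: $(A,\lambda,\alpha)$ is a standard inverse $\pi$-strict $\cS G$-module inducing the inverse partial $G$-module $(A,\theta)$. Hence Corollary~\ref{cor-H^n(G,theta,A)_isom_H^n_S(A)} yields $H^n(G,A)\cong H^n(\cS G,A)$ for every $n\ge 0$. Finally, by Definition~\ref{defn-H^n(S,A,pi)}, since $\pi(\cS G)=S$ and $A$ is already a strict inverse $S$-module, $H^n(\cS G,A)$ is by definition $H^n_S(A)$, which chains to $H^n_S(A)\cong H^n(G,A)$ as required.

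No step is really a serious obstacle, because the heavy lifting has already been done: the existence of $\pi$ with $\ker\pi\subseteq\sigma$ is exactly Theorem~\ref{thrm-max-generated}, the standardness criterion is Proposition~\ref{prop-desc_of_ESMod^*(S)}, and the cohomological identification is Corollary~\ref{cor-H^n(G,theta,A)_isom_H^n_S(A)}. The only small point I would be careful about is verifying that the strict inverse $S$-module $A$ really gives rise to a $\pi$-strict \emph{inverse} $\cS G$-module in the sense of Definition~\ref{defn-InvESMod(S)} (so that Corollary~\ref{cor-H^n(G,theta,A)_isom_H^n_S(A)} applies) — this is automatic because $A$ is an inverse semigroup and the structure map $\alpha\circ\pi$ is surjective onto $E(A)=\alpha(E(S))$, the latter following from surjectivity of $\pi$ on idempotents.
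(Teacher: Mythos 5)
Your proposal is correct and follows essentially the same route as the paper: Theorem~\ref{thrm-max-generated} provides $\pi:\cS G\to S$ with $\ker\pi\subseteq\sigma$, Definition~\ref{defn-H^n(S,A,pi)} identifies $H^n_S(A)$ with $H^n(\cS G,A)$, and Corollary~\ref{cor-H^n(G,theta,A)_isom_H^n_S(A)} gives the isomorphism with $H^n(G,A)$ for the induced partial $G$-module. The extra checks you include (standardness via Proposition~\ref{prop-desc_of_ESMod^*(S)} and the explicit use of Corollary~\ref{cor-part_mod_as_epi-strict_S(G)-mod}) are details the paper leaves implicit.
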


	 \noindent Indeed, using Theorem~\ref{thrm-max-generated} we find a group $G$ and an epimorphism $\pi:\cS G\to S$ with $\ker\pi\subseteq\sigma_{\cS G}$. Then by Definition~\ref{defn-H^n(S,A,pi)} we have $H^n_S(A)=H^n(\cS G,A)$, where on the right-hand side $A$ is meant to be a standard $\pi$-strict inverse $\cS G$-module. It remains to apply Corollary~\ref{cor-H^n(G,theta,A)_isom_H^n_S(A)} to the inverse partial $G$-module induced by $(A,\pi)$.\\
	 
	  Finally, we give an interpretation of a result by Loganathan (see~\cite[Proposition 3.6]{Loganathan81}) in terms of partial group cohomology. Let $S$ be an inverse semigroup, $G$ a group and $\phi:S\to G$ a homomorphism. Using the idea from~\cite[p. 384]{Loganathan81}, with any (global) $G$-module $A$ one can associate a strict inverse $S$-module $(\hat A,\lambda,\alpha)$ in the following way: 
	  \begin{align*}
	   \hat A&=\bigsqcup_{e\in E(S)}A_e, \mbox{ where } A_e=\{a_e\mid a\in A\}\cong A,\ \ a_eb_f=(a+b)_{ef},\\
	   \lambda_s(a_e)&=(\phi(s)a)_{ses\m},\ \ \alpha(e)=0_e.
	  \end{align*}
   Indeed, 
   \begin{align*}
    \lambda_s(a_eb_f)&=\lambda_s((a+b)_{ef})=(\phi(s)(a+b))_{sefs\m}=(\phi(s)a+\phi(s)b)_{ses\m\cdot sfs\m}\\
    &=(\phi(s)a)_{ses\m}(\phi(s)b)_{sfs\m}=\lambda_s(a_e)\lambda_s(b_f),\\
    \lambda_s\circ\lambda_t(a_e)&=\lambda_s((\phi(t)a)_{tet\m})=(\phi(s)\phi(t)a)_{stet\m s\m}=(\phi(st)a)_{ste(st)\m}=\lambda_{st}(a_e),\\
    \lambda_s(\alpha(e))&=\lambda_s(0_e)=(\phi(s)0)_{ses\m}=0_{ses\m}=\alpha(ses\m),\\
    \lambda_e(a_f)&=(\phi(e)a)_{ef}=(1_Ga)_{ef}=a_{ef}=0_ea_f=\alpha(e)a_f.
   \end{align*} 
   
   \begin{cor}\label{cor-H^n(G,A)-cong-H^n(G,hat A)}
    Let $S$ be a max-generated $F$-inverse monoid, $G=S/\sigma$, $\phi=\sigma^\natural:S\to G$, $A$ a $G$-module and $\hat A$ the corresponding strict inverse $S$-module. Then there is a structure of an inverse partial $G$-module on $\hat A$, such that $H^n(G,A)\cong H^n(G,\hat A)$ for all $n\ge 0$.
   \end{cor}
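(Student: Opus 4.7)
The plan is to obtain the isomorphism $H^n(G,A)\cong H^n(G,\hat A)$ as the composition of two already-available isomorphisms, using Lausch-Leech-Loganathan cohomology of inverse semigroups as the bridge. The first ingredient is Corollary~\ref{cor-H^n_S(A)-as-H^n(G,A)}: applied to the strict inverse $S$-module $\hat A$, and using the epimorphism $\pi:\cS G\to S$ furnished by Theorem~\ref{thrm-max-generated}, it equips $\hat A$ with a structure of an inverse partial $G$-module (via Corollary~\ref{cor-part_mod_as_epi-strict_S(G)-mod}) and yields
\[
H^n_S(\hat A)\cong H^n(G,\hat A),
\]
where the right-hand side is the partial group cohomology developed in the paper. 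The second ingredient is M. Loganathan's Proposition 3.6 from~\cite{Loganathan81}, applied to the homomorphism $\phi=\sigma^\natural:S\to G$ and the $G$-module $A$: this is precisely the assertion that
\[
H^n_S(\hat A)\cong H^n(G,A),
\]
with the right-hand side now the classical group cohomology.

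Given both isomorphisms, the corollary is proved by simply chaining them:
\[
H^n(G,A)\cong H^n_S(\hat A)\cong H^n(G,\hat A).
\]
So the proof is really a two-line deduction once the two external results are invoked.

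The only point requiring attention is compatibility between the two $S$-module structures on $\hat A$ appearing on the two sides. On the Loganathan side, $\hat A$ is an $S$-module via $\phi=\sigma^\natural$ by direct definition. On the side of Corollary~\ref{cor-H^n_S(A)-as-H^n(G,A)}, $\hat A$ is used as a standard $\pi$-strict inverse $\cS G$-module through the epimorphism $\pi:\cS G\to S$ produced by Theorem~\ref{thrm-max-generated}, which by~\eqref{eq-szendrei} satisfies $\sigma^\natural\circ\pi=\sigma_{\cS G}^\natural$. Hence the maximum group image of $S$ is canonically identified with $G=S/\sigma$, and the same $S$-module $\hat A$ is being used in both invocations; no ambiguity arises. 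The induced inverse partial $G$-module structure on $\hat A$ is then the one described by~\eqref{eq-lambda_and_alpha-for-S}, and the stated isomorphism follows.
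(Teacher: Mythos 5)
Your proposal is correct and coincides with the paper's own argument: the paper likewise chains Loganathan's result ($H^n(G,A)\cong H^n_S(\hat A)$, from Theorem 2.7 and Proposition 3.6 of~\cite{Loganathan81}) with Corollary~\ref{cor-H^n_S(A)-as-H^n(G,A)} applied to the strict inverse $S$-module $\hat A$. Your remark on the compatibility of the two $S$-module structures via $\sigma^\natural\circ\pi=\sigma_{\cS G}^\natural$ matches the explicit identification the paper carries out afterwards at the cochain level.
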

   \noindent For it was proved in Theorem 2.7 and Proposition 3.6 from \cite{Loganathan81} that, in our notations, $H^n(G,A)\cong H^n_S(\hat A)$ for all $n\ge 0$. The result now follows from Corollary~\ref{cor-H^n_S(A)-as-H^n(G,A)}.\\
   
   We shall specify this isomorphism in detail. Let $\pi$ be the epimorphism $\cS G\to S$ from Theorem~\ref{thrm-max-generated} and $\G=\pi\circ[\phantom{x}]$. Using~\eqref{eq-lambda_and_alpha-for-S} we define a structure $\0$ of an inverse partial $G$-module on $\hat A$. Then any $f\in C^n(G,A)$ can be seen as $\hat f: G^n\to\hat A$, where $\hat f(x_1,\dots,x_n)=f(x_1,\dots,x_n)_{\epsilon_{(x_1,\dots,x_n)}}\in A_{\epsilon_{(x_1,\dots,x_n)}}$ with $\epsilon_{(x_1,\dots,x_n)}$ given by~\eqref{eq-e_(x_1,...,x_n)}. Notice that $A_{\epsilon_{(x_1,\dots,x_n)}}=\cU{0_{\epsilon_{(x_1,\dots,x_n)}}\hat A}=\cU{\alpha(\epsilon_{(x_1,\dots,x_n)})\hat A}=\cU{1_{(x_1,\dots,x_n)}\hat A}$. Hence $\hat f\in C^n(G,\hat A)$ and the map 
   $$
    C^n(G,A)\ni f\mapsto\hat f\in C^n(G,\hat A)
   $$
   is an isomorphism of abelian groups, whose inverse is obtained by removing the index $\epsilon_{(x_1,\dots,x_n)}$ from $f(x_1,\dots,x_n)_{\epsilon_{(x_1,\dots,x_n)}}$. Moreover, using~\eqref{eq-lambda_and_alpha-for-S} and~\eqref{eq-szendrei} we have
	  \begin{align*}
	   (\delta^n\hat f)(x_1,\dots,x_{n+1})&=\0_{x_1}(1_{x_1\m}\hat f(x_2,\dots,x_{n+1}))\\
	   &\prod_{i=1}^n\hat f(x_1,\dots, x_ix_{i+1},\dots,x_{n+1})^{(-1)^i}\\
	   &\hat f(x_1,\dots,x_n)^{(-1)^{n+1}}\\
	   &=\lambda_{\G(x_1)}(f(x_2,\dots,x_{n+1})_{\epsilon_{(x_2,\dots,x_{n+1})}})\\
	   &\prod_{i=1}^nf(x_1,\dots, x_ix_{i+1},\dots,x_{n+1})_{\epsilon_{(x_1,\dots, x_ix_{i+1},\dots,x_{n+1})}}^{(-1)^i}\\
	   &f(x_1,\dots,x_n)_{\epsilon_{(x_1,\dots,x_n)}}^{(-1)^{n+1}}\\
	   &=(x_1f(x_2,\dots,x_{n+1}))_{\G(x_1)\epsilon_{(x_2,\dots,x_{n+1})}\G(x_1)\m}\\
	   &\prod_{i=1}^nf(x_1,\dots, x_ix_{i+1},\dots,x_{n+1})_{\epsilon_{(x_1,\dots, x_ix_{i+1},\dots,x_{n+1})}}^{(-1)^i}\\
	   &f(x_1,\dots,x_n)_{\epsilon_{(x_1,\dots,x_n)}}^{(-1)^{n+1}}.
	  \end{align*}
Since $\G(x_1)\epsilon_{(x_2,\dots,x_{n+1})}\G(x_1)\m=\epsilon_{(x_1,\dots,x_{n+1})}$, the latter product equals 
$$
(\delta^nf)(x_1,\dots,x_{n+1})_{\epsilon_{(x_1,\dots,x_{n+1})}}=(\widehat{\delta^nf})(x_1,\dots,x_{n+1}).
$$
Thus, $f\leftrightarrow\hat f$ is an isomorphism of cochain complexes, so it induces an isomorphism of the corresponding cohomology groups.

\section*{Acknowledgements}

The authors thank the referee for the valuable remarks which helped to improve the article.




\bibliography{bibl-pact}{}

\begin{thebibliography}{10}

\bibitem{AraEKa}
{\sc Ara, P., Exel, R., and Katsura, T.}
\newblock Dynamical systems of type $(m,n)$ and their {$C^*$}-algebras.
\newblock {\em Ergodic Theory Dynam. Systems 33}, 1 (2013), 1291--1325.

\bibitem{Birget-Rhodes84}
{\sc Birget, J.-C., and Rhodes, J.}
\newblock Almost finite expansions of arbitrary semigroups.
\newblock {\em J. Pure Appl. Algebra 32}, 3 (1984), 239--287.

\bibitem{Birget-Rhodes89}
{\sc Birget, J.-C., and Rhodes, J.}
\newblock Group theory via global semigroup theory.
\newblock {\em J. Algebra 120}, 2 (1989), 284--300.

\bibitem{Clifford-Preston-2}
{\sc Clifford, A., and Preston, G.}
\newblock {\em The algebraic theory of semigroups}, vol.~2 of {\em Math.
  Surveys and Monographs 7}.
\newblock Amer. Math. Soc., Providence, Rhode Island, 1967.

\bibitem{D}
{\sc Dokuchaev, M.}
\newblock Partial actions: a survey.
\newblock {\em Contemp. Math. 537\/} (2011), 173--184.

\bibitem{DE}
{\sc Dokuchaev, M., and Exel, R.}
\newblock Associativity of crossed products by partial actions, enveloping
  actions and partial representations.
\newblock {\em Trans. Amer. Math. Soc. 357}, 5 (2005), 1931--1952.

\bibitem{DEP}
{\sc Dokuchaev, M., Exel, R., and Piccione, P.}
\newblock Partial representations and partial group algebras.
\newblock {\em J. Algebra 226}, 1 (2000), 251--268.

\bibitem{DES1}
{\sc Dokuchaev, M., Exel, R., and Sim{\'o}n, J.~J.}
\newblock Crossed products by twisted partial actions and graded algebras.
\newblock {\em J. Algebra 320}, 8 (2008), 3278--3310.

\bibitem{DES2}
{\sc Dokuchaev, M., Exel, R., and Sim{\'o}n, J.~J.}
\newblock Globalization of twisted partial actions.
\newblock {\em Trans. Amer. Math. Soc. 362}, 8 (2010), 4137--4160.

\bibitem{DFP}
{\sc Dokuchaev, M., Ferrero, M., and Paques, A.}
\newblock Partial actions and {G}alois theory.
\newblock {\em J. Pure Appl. Algebra 208}, 1 (2007), 77--87.

\bibitem{DN}
{\sc Dokuchaev, M., and Novikov, B.}
\newblock Partial projective representations and partial actions.
\newblock {\em J. Pure Appl. Algebra 214}, 3 (2010), 251--268.

\bibitem{DN2}
{\sc Dokuchaev, M., and Novikov, B.}
\newblock Partial projective representations and partial actions {II}.
\newblock {\em J. Pure Appl. Algebra 216}, 2 (2012), 438--455.

\bibitem{DoNoPi}
{\sc Dokuchaev, M., Novikov, B., and Pinedo, H.}
\newblock The partial {S}chur multiplier of a group.
\newblock {\em J. Algebra 392\/} (2013), 199--225.

\bibitem{E-2}
{\sc Exel, R.}
\newblock The {B}unce-{D}eddens algebras as crossed products by partial
  automorphisms.
\newblock {\em Bol. Soc. Brasil. Mat. (N.S.) 25}, 2 (1994), 173--179.

\bibitem{E-1}
{\sc Exel, R.}
\newblock Circle actions on {$C^*$}-algebras, partial automorphisms and
  generalized {P}imsner-{V}oiculescu exact sequences.
\newblock {\em J. Funct. Anal. 122}, 3 (1994), 361--401.

\bibitem{E-3}
{\sc Exel, R.}
\newblock Approximately finite {$C^*$}-algebras and partial automorphisms.
\newblock {\em Math. Scand. 77}, 2 (1995), 281--288.

\bibitem{E0}
{\sc Exel, R.}
\newblock Twisted partial actions: a classification of regular
  {$C^*$}-algebraic bundles.
\newblock {\em Proc. London Math. Soc. 74}, 3 (1997), 417--443.

\bibitem{E1}
{\sc Exel, R.}
\newblock Partial actions of groups and actions of inverse semigroups.
\newblock {\em Proc. Amer. Math. Soc. 126}, 12 (1998), 3481--3494.

\bibitem{E3}
{\sc Exel, R.}
\newblock Hecke algebras for protonormal subgroups.
\newblock {\em J. Algebra 320}, 5 (2008), 1771--1813.

\bibitem{ELQ}
{\sc Exel, R., Laca, M., and Quigg, J.}
\newblock Partial dynamical systems and {$C^*$}-algebras generated by partial
  isometries.
\newblock {\em J. Operator Theory 47}, 1 (2002), 169--186.

\bibitem{EV}
{\sc Exel, R., and Vieira, F.}
\newblock Actions of inverse semigroups arising from partial actions of groups.
\newblock {\em J. Math. Anal. Appl. 363}, 1 (2010), 86--96.

\bibitem{F2}
{\sc Ferrero, M.}
\newblock Partial actions of groups on algebras, a survey.
\newblock {\em S{\~a}o Paulo J. Math. Sci. 3}, 1 (2009), 95--107.

\bibitem{GR}
{\sc Gon{\c c}alves, D., and Royer, D.}
\newblock Leavitt path algebras as partial skew group rings.
\newblock {\em Comm. Algebra 42}, 8 (2014), 3578--3592.

\bibitem{Howie}
{\sc Howie, J.~M.}
\newblock {\em An introduction to semigroup theory}.
\newblock Academic press, London-New York-San Francisco, 1976.

\bibitem{KL}
{\sc Kellendonk, J., and Lawson, M.~V.}
\newblock Partial actions of groups.
\newblock {\em Internat. J. Algebra Comput. 14}, 1 (2004), 87--114.

\bibitem{Lausch}
{\sc Lausch, H.}
\newblock Cohomology of inverse semigroups.
\newblock {\em J. Algebra 35\/} (1975), 273--303.

\bibitem{Lawson}
{\sc Lawson, M.~V.}
\newblock {\em Inverse semigroups. {T}he theory of partial symmetries}.
\newblock World Scientific, Singapore-New Jersey-London-Hong Kong, 1998.

\bibitem{Lawson2002}
{\sc Lawson, M.~V.}
\newblock {$E^*$}-unitary inverse semigroups.
\newblock In {\em Semigroups, algorithms, automata and languages ({C}oimbra,
  2001)}. World Sci. Publ., River Edge, NJ, 2002, pp.~195--214.

\bibitem{Leech75}
{\sc Leech, J.}
\newblock {$\cal H$}-coextensions of monoids.
\newblock {\em Mem. Amer. Math. Soc. 1}, 157 (1975), 1--66.

\bibitem{Loganathan81}
{\sc Loganathan, M.}
\newblock Cohomology of inverse semigroups.
\newblock {\em J. Algebra 70\/} (1981), 375--393.

\bibitem{NP}
{\sc Novikov, B., and Pinedo, H.}
\newblock On components of the partial {S}chur multiplier.
\newblock {\em Comm. Algebra 42}, 6 (2014), 2484--2495.

\bibitem{Pi}
{\sc Pinedo, H.}
\newblock On elementary domains of partial projective representations of
  groups.
\newblock {\em Algebra Discrete Math. 15}, 1 (2013), 63--82.

\bibitem{QR}
{\sc Quigg, J.~C., and Raeburn, I.}
\newblock Characterizations of crossed products by partial actions.
\newblock {\em J. Operator Theory 37}, 2 (1997), 311--340.

\bibitem{Sieben}
{\sc Sieben, N.}
\newblock {$C^*$}-crossed products by partial actions and actions of inverse
  semigroups.
\newblock {\em J. Aust. Math. Soc., Ser. A 63}, 1 (1997), 32--46.

\bibitem{S}
{\sc Steinberg, B.}
\newblock Partial actions of groups on cell complexes.
\newblock {\em Monatsh. Math. 138}, 2 (2003), 159--170.

\bibitem{Szendrei89}
{\sc Szendrei, M.~B.}
\newblock A note on {B}irget-{R}hodes expansion of groups.
\newblock {\em J. Pure Appl. Algebra 58}, 1 (1989), 93--99.

\end{thebibliography}


\begin{thebibliography}{1}

\bibitem{DK}
{\sc Dokuchaev, M., and Khrypchenko, M.}
\newblock {Partial cohomology of groups}.
\newblock {\em J. Algebra 427\/} (2015), 142--182.

\bibitem{E1}
{\sc Exel, R.}
\newblock {Partial actions of groups and actions of inverse semigroups}.
\newblock {\em Proc. Amer. Math. Soc. 126}, 12 (1998), 3481--3494.

\bibitem{KL}
{\sc Kellendonk, J., and Lawson, M.~V.}
\newblock {Partial actions of groups}.
\newblock {\em Internat. J. Algebra Comput. 14}, 1 (2004), 87--114.

\bibitem{Lawson}
{\sc Lawson, M.~V.}
\newblock {\em {Inverse semigroups. {T}he theory of partial symmetries}}.
\newblock World Scientific, Singapore-New Jersey-London-Hong Kong, 1998.

\bibitem{Lawson2002}
{\sc Lawson, M.~V.}
\newblock {{$E^*$}-unitary inverse semigroups}.
\newblock In {\em {Semigroups, algorithms, automata and languages ({C}oimbra,
  2001)}}. World Sci. Publ., River Edge, NJ, 2002, pp.~195--214.

\end{thebibliography}
\bibliographystyle{acm}

\end{document}